\documentclass[11pt]{article}

\usepackage{amsmath,amssymb,amsfonts,enumitem,amsthm,accents,pstricks,pst-node,pstricks-add,mathrsfs,xy,graphicx}
\xyoption{all}

\addtolength{\textheight}{1.3in}
\addtolength{\topmargin}{-.6in}
\addtolength{\textwidth}{1.5in}
\addtolength{\oddsidemargin}{-.75in}
\addtolength{\evensidemargin}{-.75in}

\numberwithin{equation}{section}

\newtheorem{thm}{Theorem}[section]
\newtheorem{lmm}[thm]{Lemma}
\newtheorem{prp}[thm]{Proposition}
\newtheorem{crl}[thm]{Corollary}
\theoremstyle{definition}
\newtheorem{dfn}[thm]{Definition}
\newtheorem{eg}[thm]{Example}

\def\BE#1{\begin{equation}\label{#1}}
\def\EE{\end{equation}}
\def\eref#1{(\ref{#1})}

\def\lra{\longrightarrow}
\def\xlra#1{\xrightarrow{#1}}
\def\Lra{\Longrightarrow}
\def\Llra{\Longleftrightarrow}
\def\lhra{\ensuremath{\lhook\joinrel\relbar\joinrel\rightarrow}}

\def\ov#1{\overline{#1}}

\def\wt#1{\widetilde{#1}}
\def\sf#1{\textsf{#1}}

\def\sm#1{\begin{small}{#1}\end{small}}

\def\wh#1{\widehat{#1}}

\def\be{\beta}

\def\io{\iota}

\def\na{\nabla}
\def\om{\omega}
\def\si{\sigma}

\def\vph{\varphi}

\def\ze{\zeta}

\def\De{\Delta}

\def\Om{\Omega}

\def\bI{\mathbb I}
\def\cA{\mathcal A}
\def\C{\mathbb C}

\def\fD{\mathfrak D}

\def\bI{\mathbb I}

\def\cN{\mathcal N}
\def\cO{\mathcal O}
\def\P{\mathbb P}
\def\cP{\mathcal P}
\def\fR{\mathfrak R}
\def\R{\mathbb R}
\def\cR{\mathcal R}

\def\bS{\mathbb S}

\def\X{\mathbf X}

\def\Z{\mathbb Z}

\def\jo{\jmath}

\def\fc{\mathfrak c}

\def\fI{\mathfrak i}

\def\AK{\textnormal{AK}}
\def\Aux{\textnormal{Aux}}
\def\Aut{\textnormal{Aut}}
\def\codim{\textnormal{codim}}

\def\Dom{\textnormal{Dom}}

\def\id{\textnormal{id}}
\def\Im{\textnormal{Im}}

\def\nd{\textnormal{d}}

\def\supp{\textnormal{supp}}

\def\Symp{\textnormal{Symp}}

\def\ver{\textnormal{ver}}

\def\eset{\emptyset}
\def\prt{\partial}
\def\1{\mathbf 1}

\def\bu{\bullet}

\begin{document}

\title{Normal Crossings Singularities for Symplectic Topology, II}
\author{Mohammad F.~Tehrani\thanks{Partially supported 
by the Homological Mirror Symmetry Collaboration grant}\,, 
Mark McLean\thanks{Partially supported by NSF grants 1508207 and 1811861}\,, and 
Aleksey Zinger\thanks{Partially supported by NSF grants 1500875 and 1901979}}

\date{\today}

\maketitle

\begin{abstract}
\noindent
In recent work, we introduced topological notions of simple normal crossings symplectic divisor 
and variety, 
showed that they are equivalent, in a suitable sense, to the corresponding geometric notions,
and established a topological smoothability criterion for~them. 
The present paper extends these notions to arbitrary normal crossings singularities, 
from both local and global perspectives, and
shows that they are also equivalent to the corresponding geometric notions.
In subsequent papers, we extend our smoothability criterion to 
arbitrary normal crossings symplectic varieties and construct 
a variety of geometric structures associated with normal crossings singularities
in algebraic geometry.
\end{abstract}

\tableofcontents

\section{Introduction}
\label{intro_sec}

\noindent
Normal crossings (NC) divisors and varieties are the most basic and important classes 
of singular objects in complex algebraic and K\"ahler geometries.
An \sf{NC divisor} in a smooth variety~$X$ is a subvariety~$V$ 
locally defined by an equation of the form
\BE{Local1_e} z_1\cdots z_k = 0\EE
in a holomorphic coordinate chart $(z_1,\ldots,z_n)$ on~$X$. 
From a global perspective, an NC divisor is the image of an  immersion with 
 transverse self-intersections.
A \sf{simple normal crossings} (or~\sf{SC}) \sf{divisor} 
is a global transverse union of smooth divisors, i.e. 
$$V=\bigcup_{i\in S}\!V_i \subset X$$
with the \sf{singular locus}
$$V_\prt=\bigcup_{\begin{subarray}{c}I\subset S\\ |I|=2\end{subarray}}\!\!\!V_{I}\subset V,
\qquad\hbox{where}\quad V_I=\bigcap_{i\in I}\!V_i\quad \forall~I\!\subset\!S.$$
An~\sf{NC variety} of complex dimension $n$ is a variety $X_{\eset}$ 
that can be locally embedded as an NC divisor in~$\C^{n+1}$. 
Thus, every sufficiently small open set~$U_\eset$ in  $X_{\eset}$ 
can be written~as 
$$U_\eset=\bigg(\bigsqcup_{i\in S}\!U_i\bigg)\!\Big/\!\!\!\sim,
\qquad  U_{ij}\approx U_{ji}\quad \forall~i,j\!\in\!S,~i\!\neq\!j,$$
where $\{U_{ij}\}_{j\in S-i}$ is an SC divisor in a smooth component~$U_i$ of~$U_\eset$. 
An \sf{SC~variety} 
is a global transverse union of smooth varieties $\{X_i\}_{i\in S}$ 
along SC divisors $\{X_{ij}\}_{j\in S-i}$ in~$X_i$,~i.e.
\BE{Xesetdfn_e}
X_{\eset}=\bigg(\bigsqcup_{i\in S}\!X_i\bigg)\!\Big/\!\!\!\sim,\qquad  
X_{ij}\approx X_{ji}\quad \forall~i,j\!\in\!S,~i\!\neq\!j,\EE
with the \sf{singular locus}
\BE{Xprtdfn_e}
X_{\prt}=\bigcup_{\begin{subarray}{c}i,j\in S\\ i\neq j\end{subarray}}
\!\!\!X_{ij}\subset X_\eset. \EE
A two-dimensional 3-fold SC variety is shown in Figure~\ref{P2cut_fig}.\\

\noindent
In parallel with his introduction of $J$-holomorphic curve techniques 
in symplectic topology in~\cite{Gr}, 
Gromov asked about the feasibility of introducing notions of 
singular (sub-)varieties of higher dimension suitable for this field; 
see \cite[p343]{GrBook}. 
Important developments since then, such as symplectic sum constructions \cite{Gf,MW}, 
degeneration and decomposition formulas for Gromov-Witten invariants
\cite{Tian,CH,LR,Jun2,Brett}, log Gromov-Witten theory \cite{GS,AC}, 
affine symplectic geometry \cite{MAff,McLean}, 
homological mirror symmetry~\cite{Sheridan}, and 
a new perspective on Atiyah-Floer Conjecture \cite{DF} suggest 
the need for (soft) symplectic notions of NC divisors and varieties 
that are equivalent, in a suitable sense, to the corresponding (rigid) geometric notions.\\

\noindent
A {\it smooth} symplectic divisor~$V$ in a symplectic manifold $(X,\om)$
is an almost K\"ahler divisor with respect to an $\om$-compatible almost complex structure~$J$
which is integrable in the normal direction to~$V$.
Furthermore, the projection
\BE{AKtoXproj_e}\AK(X,V)\lra \Symp(X,V), \qquad (\om,J)\lra \om,\EE
from the space of such pairs $(\om,J)$ to the space of symplectic forms~$\om$ on~$X$
which restrict to symplectic forms on~$V$ is a weak homotopy equivalence.
This property of~\eref{AKtoXproj_e}, rather than its surjectivity, 
is fundamental to applications of smooth divisors in symplectic topology.
In~\cite{SympDivConf}, we propose\\

\begin{minipage}{6in}\label{DefPhil_minip}
{\it to treat NC symplectic divisors/varieties up to deformation equivalence,
showing that each deformation equivalence class has a subspace of 
sufficiently ``nice" representatives so that an appropriate analogue
of~\eref{AKtoXproj_e} is a weak homotopy equivalence}.\\
\end{minipage}

\noindent
We also prove that unions of so-called {\it positively intersecting collections}
of smooth symplectic divisors provide for a notion of SC symplectic divisors
compatible with this prospective and lead to a compatible notion of SC symplectic varieties.
An overview of the program initiated in~\cite{SympDivConf} and of its potential applications
in symplectic topology and algebraic geometry appears in~\cite{SympNCSumm}.\\

\noindent
The present paper extends Definitions~\ref{SCD_dfn} and~\ref{SCC_dfn} 
of SC symplectic divisors and varieties,
the notions of regularizations for them, and 
the main theorems of~\cite{SympDivConf} to arbitrary NC divisors and varieties. 
This is done from both local and global perspectives, 
which are better suited for different types of applications.
In the local perspectives of Definitions~\ref{NCD_dfn} and~\ref{NCC_dfn},
\sf{NC symplectic divisors} and \sf{varieties} are spaces 
that are locally  SC symplectic divisors and varieties, respectively.
In the global perspectives of Proposition~\ref{NCD_prp} and Section~\ref{NCCgl_subs},
NC symplectic divisors and varieties are images of immersions with transverse 
self-intersections.
Regularizations are key to  many applications of divisors in symplectic topology, including 
the symplectic sum constructions of \cite{Gf,MW,SympSumMulti}.
They in particular ensure the existence of almost complex structures~$J$ on~$X$ that are ``nice"
along divisors in smooth and NC symplectic varieties.
Such almost complex structures in turn play a central role in Gromov-Witten theory 
and in its interplay with algebraic geometry and string theory,
for example.\\

\noindent
After recalling the notions of regularizations for SC symplectic divisors and varieties
in Section~\ref{SC_sec},
we define NC divisors and their regularizations from a local perspective
in Section~\ref{NCDloc_subs} and from a global perspective in Section~\ref{NCDgl_subs}.
The two perspectives are  equivalent by Proposition~\ref{NCD_prp}
and the last paragraph of Section~\ref{NCDgl_subs}. 
Theorem~\ref{NCD_thm} extends \cite[Theorem~2.13]{SympDivConf}
to arbitrary NC symplectic divisors.
We define NC varieties and their regularizations from a local perspective
in Section~\ref{NCCloc_subs} and from a global perspective in
Section~\ref{NCCgl_subs}.
The two perspectives are shown to be equivalent in Section~\ref{NCCcomp_subs}. 
Theorem~\ref{NCC_thm} extends \cite[Theorem~2.17]{SympDivConf} 
to arbitrary NC symplectic varieties. 
Section~\ref{eg_subs} provides examples of non-SC normal crossings divisors and varieties.
In Section~\ref{NCCpf_sec}, we deduce Theorem~\ref{NCC_thm} from
the proof of \cite[Theorem~2.17]{SympDivConf} using the local perspective of 
Section~\ref{NCCloc_subs} and a seemingly weaker, but equivalent, version 
of the notion of regularization of \cite[Definition~2.15(1)]{SympDivConf}.

\section{Simple crossings divisors and varieties}
\label{SC_sec}

\noindent
We begin by introducing the most commonly used notation.
For a set $S$, denote by $\cP(S)$ the collection of subsets of~$S$ and
by $\cP^*(S)\!\subset\!\cP(S)$ the collection of nonempty subsets.
If in addition $i\!\in\!S$, let 
$$\cP_i(S)=\big\{I\!\in\!\cP(S)\!:\,i\!\in\!S\big\}.$$
For $N\!\in\!\Z^{\ge0}$, let 
$$[N]=\{1,\ldots,N\}, \quad \cP(N)\!=\!\cP\big([N]\big), \quad 
\cP^*(N)\!=\!\cP^*\big([N]\big).$$
For $i\!\in\![N]$, let $\cP_i(N)\!=\!\cP_i([N])$.\\

\noindent
For $k\!\in\!\Z^{\ge0}$, denote by $\bS_k$ the $k$-th symmetric group. 
For $k'\!\in\![k]$, we identify $\bS_{k'}$ with the subgroup of~$\bS_k$
consisting of the permutations of $[k']\!\subset\![k]$ and
denote by $\bS_{[k]-[k']}\!\subset\!\bS_k$ the subgroup of~$\bS_k$
consisting of the permutations of $[k]\!-\![k']$.
In particular, 
$$\bS_{k'}\times\!\bS_{[k]-[k']}\subset \bS_k\,.$$
For each $\si\!\in\!\bS_k$ and $i\!\in\![k]$, let $\si_i\!\in\!\bS_{k-1}$
be the permutation obtained from the bijection
$$[k]\!-\!\{i\}\lra [k]\!-\!\{\si(i)\}, \qquad j\lra \si(j),$$
by identifying its domain and target with $[k\!-\!1]$ in the order-preserving fashions.\\

\noindent
If $\cN\!\lra\!V$ is a vector bundle, $\cN'\!\subset\!\cN$, and $V'\!\subset\!V$, we define
$$\cN'|_{V'}=\cN|_{V'}\cap\cN'\,.$$
Let $\bI\!=\![0,1]$.

\subsection{Notation and definitions}
\label{SCdfn_subs}

\noindent
Let $X$ be a (smooth) manifold. 
For any submanifold $V\!\subset\!X$, let
$$\cN_XV\equiv \frac{TX|_V}{TV}\lra V$$
denote the normal bundle of~$V$ in~$X$.
For a collection $\{V_i\}_{i\in S}$ of submanifolds of~$X$ and $I\!\subset\!S$, let
$$V_I\equiv \bigcap_{i\in I}\!V_i\subset X\,.$$
Such a collection 
is called \sf{transverse} if any subcollection $\{V_i\}_{i\in I}$ of these submanifolds
intersects transversely, i.e.~the homomorphism
\BE{TransVerHom_e}
T_xX\oplus\bigoplus_{i\in I}T_xV_i\lra \bigoplus_{i\in I}T_xX, \qquad
\big(v,(v_i)_{i\in I}\big)\lra (v\!+\!v_i)_{i\in I}\,,\EE
is surjective for all $x\!\in\!V_I$. 
By the Inverse Function Theorem,
each subspace $V_I\!\subset\!X$ is then a submanifold of~$X$ 
of codimension
$$\codim_XV_I=\sum_{i\in I}\codim_XV_i$$
and the homomorphisms 
\BE{cNorient_e2}\begin{split}
\cN_XV_I\lra \bigoplus_{i\in I}\cN_XV_i\big|_{V_I}\quad&\forall~I\!\subset\!S,\qquad
\cN_{V_{I-i}}V_I\lra \cN_XV_i\big|_{V_I} \quad\forall~i\!\in\!I\!\subset\!S,\\
&\bigoplus_{i\in I-I'}\!\!\!\cN_{V_{I-i}}V_I\lra 
\cN_{V_{I'}}V_I \quad\forall~I'\!\subset\!I\!\subset\!S
\end{split}\EE
induced by inclusions of the tangent bundles are isomorphisms.\\

\noindent
As detailed in \cite[Section~2.1]{SympDivConf}, 
a transverse collection $\{V_i\}_{i\in S}$ of oriented submanifolds of
an oriented manifold~$X$
of even codimensions  induces an orientation on each submanifold $V_I\!\subset\!X$
with $|I|\!\ge\!2$; we call it \sf{the intersection orientation of~$V_I$}.
If $V_I$ is zero-dimensional, it is a discrete collection of points in~$X$
and the homomorphism~\eref{TransVerHom_e} is an isomorphism at each point $x\!\in\!V_I$;
the intersection orientation of~$V_I$ at $x\!\in\!V_I$
then corresponds to a plus or minus sign, depending on whether this isomorphism
is orientation-preserving or orientation-reversing.
We call the original orientations of 
$X\!=\!V_{\eset}$ and $V_i\!=\!V_{\{i\}}$ \sf{the intersection orientations}
of these submanifolds~$V_I$ of~$X$ with $|I|\!<\!2$.\\

\noindent
Suppose $(X,\om)$ is a symplectic manifold and $\{V_i\}_{i\in S}$ is a transverse collection 
of submanifolds of~$X$ such that each $V_I$ is a symplectic submanifold of~$(X,\om)$.
Each $V_I$ then carries an orientation induced by $\om|_{V_{I}}$,
which we call the \sf{$\om$-orientation}.
If $V_I$ is zero-dimensional, it is automatically a symplectic submanifold of~$(X,\om)$;
the $\om$-orientation of~$V_I$ at each point $x\!\in\!V_I$ corresponds to the plus sign 
by definition.
By the previous paragraph, the $\om$-orientations of~$X$ and~$V_i$ with $i\!\in\!I$
also induce intersection orientations on all~$V_I$.

\begin{dfn}\label{SCD_dfn}
Let $(X,\om)$ be a symplectic manifold.
A \sf{simple crossings} (or \sf{SC}) \sf{symplectic divisor} 
in~$(X,\om)$ is a finite transverse collection 
$\{V_i\}_{i\in S}$ of closed submanifolds of~$X$ of codimension~2 such that 
$V_I$ is a symplectic submanifold of~$(X,\om)$ for every $I\!\subset\!S$
and the intersection and $\om$-orientations of~$V_I$ agree.
\end{dfn}

\noindent
An SC symplectic divisor $\{V_i\}_{i\in S}$ with $|S|\!=\!1$ is 
a smooth symplectic divisor in the usual sense. 
If $(X,\om)$ is a 4-dimensional symplectic manifold, 
a finite transverse collection  $\{V_i\}_{i\in S}$ of closed submanifolds of~$X$ 
of codimension~2 is an SC symplectic divisor if all points of the pairwise intersections
$V_{i_1}\!\cap\!V_{i_2}$ with $i_1\!\neq\!i_2$ are positive. 
By \cite[Example~2.7]{SympDivConf}, it is not sufficient to consider 
the deepest (non-empty) intersections in higher~dimensions.

\begin{dfn}\label{SCdivstr_dfn}
Let $X$ be a manifold and $\{V_i\}_{i\in S}$ be a finite transverse collection of 
closed submanifolds of~$X$ of codimension~2.
A \sf{symplectic structure on $\{V_i\}_{i\in S}$ in~$X$} is a symplectic form~$\om$ 
on~$X$ such that $V_I$ is a symplectic submanifold of $(X,\om)$ for all $I\!\subset\!S$.
\end{dfn}

\noindent
For $X$ and $\{V_i\}_{i\in S}$ as in Definition~\ref{SCdivstr_dfn}, 
we denote by $\Symp(X,\{V_i\}_{i\in S})$ the space of all symplectic structures 
on $\{V_i\}_{i\in S}$ in~$X$ and by 
$$\Symp^+\big(X,\{V_i\}_{i\in S}\big)\subset \Symp\big(X,\{V_i\}_{i\in S}\big)$$
the subspace of the symplectic forms~$\om$ such that $\{V_i\}_{i\in S}$
is an SC symplectic divisor in~$(X,\om)$.
The latter is a union of topological components of the former.

\begin{dfn}\label{TransConf_dfn1}
Let $N\!\in\!\Z^+$.
An \sf{$N$-fold transverse configuration} is a tuple $\{X_I\}_{I\in\cP^*(N)}$
of manifolds such that $\{X_{ij}\}_{j\in[N]-i}$ is a transverse collection 
of submanifolds of~$X_i$ for each $i\!\in\![N]$ and
$$X_{\{ij_1,\ldots,ij_k\}}\equiv \bigcap_{m=1}^k\!\!X_{ij_m}
=X_{ij_1\ldots j_k}\qquad\forall~j_1,\ldots,j_k\in[N]\!-\!i.$$
A \sf{symplectic structure} on an $N$-fold transverse configuration~$\X$
such that $X_{ij}$ is a closed submanifold of~$X_i$ of codimension~2
for all $i,j\!\in\![N]$ distinct is a~tuple 
$$(\om_i)_{i\in[N]}\in 
\prod_{i=1}^N\Symp\big(X_i,\{X_{ij}\}_{j\in[N]-i}\big)$$
such that $\om_{i_1}|_{X_{i_1i_2}}\!=\!\om_{i_2}|_{X_{i_1i_2}}$ for all $i_1,i_2\!\in\![N]$.
\end{dfn}

\noindent
For an $N$-fold transverse configuration~$\X$ as in Definition~\ref{TransConf_dfn1}, 
we define the spaces \hbox{$X_{\eset}\!\supset\!X_{\prt}$} as in~\eref{Xesetdfn_e}
and~\eref{Xprtdfn_e}.
If in addition $X_{ij}$ is a closed submanifold of~$X_i$ of codimension~2
for all $i,j\!\in\![N]$ distinct,
let $\Symp(\X)$ denote the space of all symplectic structures 
on $\X$ and
$$\Symp^+\big(\X\big)= \Symp\big(\X\big)
\cap \prod_{i=1}^N\Symp^+\big(X_i,\{X_{ij}\}_{j\in[N]-i}\big)\,.$$
Thus, if $(\om_i)_{i\in[N]}$ is an element of $\Symp^+(\X)$,
then $\{X_{ij}\}_{j\in[N]-i}$ is an SC symplectic divisor in $(X_i,\om_i)$
for each $i\!\in\![N]$.

\begin{dfn}\label{SCC_dfn}
Let $N\!\in\!\Z^+$.
An \sf{$N$-fold simple crossings} (or \sf{SC}) \sf{symplectic configuration} 
is a~tuple 
\BE{SCCdfn_e}\X=\big((X_I)_{I\in\cP^*(N)},(\om_i)_{i\in[N]}\big)\EE
such that $\{X_I\}_{I\in\cP^*(N)}$ is an $N$-fold transverse configuration,
$X_{ij}$ is a closed submanifold of~$X_i$ of codimension~2
for all $i,j\!\in\![N]$ distinct, and
$(\om_i)_{i\in[N]}\in\Symp^+(\X)$.
The \sf{SC symplectic variety associated~to} such a tuple~$\X$ 
is the pair~$(X_{\eset},(\om_i)_{i\in[N]})$.
\end{dfn}

\begin{figure}
\begin{pspicture}(-3,-2)(11,2)
\psset{unit=.3cm}
\psline[linewidth=.1](15,-2)(22,-2)\psline[linewidth=.1](15,-2)(15,5)
\psline[linewidth=.1](15,-2)(10.5,-6.5)\pscircle*(15,-2){.3}
\rput(19.5,2.5){\sm{$\wh\P^2$}}\rput(11.5,-1){\sm{$\wh\P^2$}}\rput(17,-5){\sm{$\wh\P^2$}}
\rput(21.5,-2.9){\sm{$E$}}\rput(21.5,-1.1){\sm{$\bar{L}$}}
\rput(14.2,4.5){\sm{$\bar{L}$}}\rput(15.7,4.5){\sm{$E$}}
\rput(10,-5.7){\sm{$E$}}\rput(12,-6.1){\sm{$\bar{L}$}}
\rput(15.8,-1.2){\sm{$P$}}
\end{pspicture}
\caption{A 3-fold NC variety}
\label{P2cut_fig}
\end{figure}
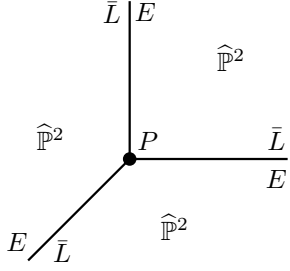

\noindent
A two-dimensional 3-fold SC configuration and associated NC variety
are shown in Figure~\ref{P2cut_fig}.
In this figure, $\wh\P^2$ denotes the blowup of 
the complex projective space~$\P^2$ at a point~$p$ and
$E,\ov{L}\!\subset\!\wh\P^2$ are the exceptional divisor and 
the proper transform of a line through~$p$, respectively.
In particular, we take 
$$X_{i,i-1}=\ov{L}\subset X_i=\wh\P^2, \quad 
X_{i,i+1}=E\subset X_i=\wh\P^2 \qquad\forall~i\in\Z_3\approx\{1,2,3\}$$
and choose an identification $\ov{L}\!\approx\!E$.

\subsection{Regularizations}
\label{SCDregul_subs}

\noindent
We next recall the notions of regularizations for a submanifold $V\!\subset\!X$, 
a symplectic submanifold with a split normal bundle,
a transverse collection $\{V_i\}_{i\in S}$ of submanifolds of
a manifold~$X$  with a symplectic structure~$\om$ on $(X,\{V_i\}_{i\in S})$,
and an SC symplectic configuration~$\X$. 
A regularization in the sense of Definition~\ref{TransCollregul_dfn}\ref{sympregul_it} 
for $\{V_i\}_{i\in S}$ in~$(X,\om)$
symplectically models a neighborhood of $x\!\in\!V_I$ in~$X$ on a neighborhood
of the zero section~$V_I$ in the normal bundle~$\cN_XV_I$ split as in~\eref{cNorient_e2}
with a standardized symplectic form.
A regularization for~$\X$ in the sense of Definition~\ref{TransConfregul_dfn}\ref{SCCreg_it} 
is a compatible collection of regularizations for the collections $\{X_{ij}\}_{j\in[N]-i}$
of submanifolds of~$X_i$.\\

\noindent
If $B$ is a manifold, possibly with boundary,
we call a family $(\om_t)_{t\in B}$ of 2-forms on~$X$ \sf{smooth} 
if the 2-form~$\wt\om$ on~$B\!\times\!X$ given~by
$$\wt\om_{(t,x)}(v,w)=
\begin{cases}
\om_t|_x(v,w),&\hbox{if}~v,w\!\in\!T_xX;\\
0,&\hbox{if}~v\!\in\!T_tB;
\end{cases}$$
is smooth. 
Smoothness for families of other objects is defined similarly.\\

\noindent
For a vector bundle $\pi\!:\cN\!\lra\!V$, we denote by $\ze_{\cN}$ 
\sf{the radial vector field} on the total space of~$\cN$; it is given~by
$$\ze_{\cN}(v)=(v,v)\in\pi^*\cN =T\cN^{\ver} \lhra T\cN\,.$$
Let $\Om$ be a fiberwise 2-form on~$\cN\!\lra\!V$.
A connection~$\na$ on~$\cN$ induces a projection $T\cN\!\lra\!\pi^*\cN$ and thus
determines an extension~$\Om_{\na}$ of~$\Om$ to a 2-form on 
the total space of~$\cN$.
If $\om$ is a closed 2-form on~$V$, the 2-form
\BE{ombund_e2}
\wt\om \equiv \pi^*\om+\frac12\nd\io_{\ze_{\cN}}\Om_{\na}
\equiv \pi^*\om+\frac12\nd\big(\Om_{\na}(\ze_{\cN},\cdot)\big)\EE
on the total space of $\cN$ is then closed and
restricts to~$\Om$ on $\pi^*\cN\!=\!T\cN^{\ver}$.
If $\om$ is a symplectic form on~$V$ and $\Om$ is a fiberwise symplectic form on~$\cN$,
then~$\wt\om$ is a symplectic form on a neighborhood of~$V$ in~$\cN$.\\

\noindent
We call $\pi\!:(L,\rho,\na)\!\lra\!V$ a \sf{Hermitian line bundle} if
$V$ is a manifold, $L\!\lra\!V$ is a smooth complex line bundle,
$\rho$ is a Hermitian metric on~$L$, 
and $\na$ is a $\rho$-compatible connection on~$L$.
We use the same notation~$\rho$ to denote the square of the norm function on~$L$
and the Hermitian form on~$L$ which is $\C$-antilinear in the second input.
Thus,
$$\rho(v)\equiv\rho(v,v), \quad 
\rho(\fI v,w)=\fI\rho(v,w)=-\rho(v,\fI w) 
\qquad\forall~(v,w)\!\in\!L\!\times_V\!L.$$
Let $\rho^{\R}$  denote the real part of the form~$\rho$.\\

\noindent
A Riemannian metric on an oriented  real vector bundle \hbox{$L\!\lra\!V$} of rank~2
determines a complex structure on the fibers of~$V$.
A \sf{Hermitian structure} on an oriented  real vector bundle \hbox{$L\!\lra\!V$} of rank~2
is a pair $(\rho,\na)$ such that $(L,\rho,\na)$ is a Hermitian line bundle
with the complex structure~$\fI_{\rho}$ determined by the Riemannian metric~$\rho^{\R}$.
If $\Om$ is a fiberwise symplectic form on an oriented vector bundle \hbox{$L\!\lra\!V$} of rank~2,
an \sf{$\Om$-compatible Hermitian structure} on~$L$ is a Hermitian structure $(\rho,\na)$ on~$L$ 
such that
$\Om(\cdot,\fI_{\rho}\cdot)=\rho^{\R}(\cdot,\cdot)$.\\

\noindent
Let $(L_i,\rho_i,\na^{(i)})_{i\in I}$ be a finite collection of Hermitian line
bundles over~$V$.
If each $(\rho_i,\na^{(i)})$ is compatible with a fiberwise symplectic form~$\Om_i$ on~$L_i$
and
$$(\cN,\Om,\na)\equiv\bigoplus_{i\in I}\big(L_i,\Om_i,\na^{(i)}\big),$$
then the 2-form~\eref{ombund_e2} is given~by 
\BE{ombund_e}
\wt\om=\om_{(\rho_i,\na^{(i)})_{i\in I}}
\equiv  \pi^*\om+\frac12
\bigoplus_{i\in I} \pi_{I;i}^*\nd\big((\Om_i)_{\na^{(i)}}(\ze_{L_i},\cdot)\big),\EE
where $\pi_{I;i}\!:\cN\!\lra\!L_i$ is the component projection map.\\

\noindent
If in addition $\Psi\!:V'\!\lra\!V$ is a smooth map and 
$(L_i',\rho_i',\na'^{(i)})_{i\in I}$ is a finite collection of Hermitian line
bundles over~$V'$, we call a (fiberwise) vector bundle isomorphism
$$\wt\Psi\!: \bigoplus_{i\in I'}L'_i\lra \bigoplus_{i\in I}L_i$$
covering~$\Psi$ a \sf{product Hermitian isomorphism} if
$$\wt\Psi\!: (L_i',\rho_i',\na'^{(i)}) \lra 
\Psi^*(L_i,\rho_i,\na^{(i)})$$
is an isomorphism of Hermitian line bundles over~$V'$ for every $i\!\in\!I$.

\begin{dfn}\label{smreg_dfn}
Let $X$ be a manifold and $V\!\subset\!X$ be a  submanifold
with normal bundle $\cN_XV\!\lra\!V$. 
A \sf{regularization for~$V$ in~$X$} is a diffeomorphism $\Psi\!:\cN'\!\lra\!X$
from a neighborhood of~$V$ in~$\cN_XV$ onto a neighborhood of~$V$ in~$X$ such
that $\Psi(x)\!=\!x$ and the isomorphism
$$ \cN_XV|_x=T_x^{\ver}\cN_XV \lhra T_x\cN_XV
\stackrel{\nd_x\Psi}{\lra} T_xX\lra \frac{T_xX}{T_xV}\equiv\cN_XV|_x$$
is the identity for every $x\!\in\!V$.
\end{dfn}

\noindent
If $(X,\om)$ is a symplectic manifold and $V$ is a  symplectic submanifold in~$(X,\om)$,
then $\om$ induces a fiberwise symplectic form~$\om|_{\cN_XV}$ on
the normal bundle~$\cN_XV$ of~$V$ in~$X$ via the isomorphism
$$\pi_{\cN_XV}\!:
\cN_XV\equiv \frac{TX|_V}{TV}\approx TV^{\om}
\equiv \big\{v\!\in\!T_xX\!:\,x\!\in\!V,\,\om(v,w)\!=\!0~\forall\,w\!\in\!T_xV\big\}\,.$$
We denote the restriction of~$\om|_{\cN_XV}$ to a subbundle $L\!\subset\!\cN_XV$
by~$\om|_L$.

\begin{dfn}\label{sympreg1_dfn}
Let $X$ be a  manifold, $V\!\subset\!X$ be a  submanifold, and
$$\cN_XV=\bigoplus_{i\in I}L_i$$
be a fixed splitting into oriented rank~2 subbundles. 
If $\om$ is a symplectic form on~$X$ such that $V$ is a symplectic submanifold
and $\om|_{L_i}$ is nondegenerate for every $i\!\in\!I$, then
an \sf{$\om$-regularization for~$V$ in~$X$} is a tuple $((\rho_i,\na^{(i)})_{i\in I},\Psi)$, 
where $(\rho_i,\na^{(i)})$ is an $\om|_{L_i}$-compatible Hermitian structure on~$L_i$
for each $i\!\in\!I$ and $\Psi$ is a regularization for~$V$ in~$X$, such that 
$$\Psi^*\om=\om_{(\rho_i,\na^{(i)})_{i\in I}}\big|_{\Dom(\Psi)}.$$
\end{dfn}

\vspace{.1in}

\noindent
Suppose $\{V_i\}_{i\in S}$ is a transverse collection of codimension~2 submanifolds of~$X$.
For each $I\!\subset\!S$, the last isomorphism in~\eref{cNorient_e2} provides 
a natural decomposition 
$$\pi_I\!:\cN_XV_I\!=\!\bigoplus_{i\in I}\cN_{V_{I-i}}V_I  \lra V_I$$
of the normal bundle of~$V_I$ in~$X$ into oriented rank~2 subbundles. 
We take this decomposition as given for the purposes of applying Definition~\ref{sympreg1_dfn}.
If in addition $I'\!\subset\!I$, let
\BE{cNIIprdfn_e}\pi_{I;I'}\!:\cN_{I;I'}\equiv 
\bigoplus_{i\in I-I'}\!\!\!\cN_{V_{I-i}}V_I=\cN_{V_{I'}}V_I\lra V_I\,.\EE
There are canonical identifications
\BE{cNtot_e}\cN_{I;I-I'}=\cN_XV_{I'}|_{V_I}, \quad
\cN_XV_I=\pi_{I;I'}^*\cN_{I;I-I'}=\pi_{I;I'}^*\cN_XV_{I'}
\qquad\forall~I'\!\subset\!I\!\subset\![N].\EE
The first equality in the second statement above
is used in particular in~\eref{overlap_e}. 

\begin{dfn}\label{TransCollReg_dfn}
Let $X$ be a manifold and $\{V_i\}_{i\in S}$ be a transverse collection 
of submanifolds of~$X$.
A \sf{system of regularizations for}  $\{V_i\}_{i\in S}$ in~$X$ is a~tuple 
$(\Psi_I)_{I\subset S}$, where $\Psi_I$ is a regularization for~$V_I$ in~$X$
in the sense of Definition~\ref{smreg_dfn}, such~that
\BE{Psikk_e}
\Psi_I\big(\cN_{I;I'}\!\cap\!\Dom(\Psi_I)\big)=V_{I'}\!\cap\!\Im(\Psi_I)\EE
for all $I'\!\subset\!I\!\subset\!S$.
\end{dfn}

\noindent
Given a system of regularizations as in Definition~\ref{TransCollReg_dfn}
and $I'\!\subset\!I\!\subset\!S$, let
$$\cN_{I;I'}' = \cN_{I;I'}\!\cap\!\Dom(\Psi_I), \qquad
\Psi_{I;I'}\equiv \Psi_I\big|_{\cN_{I;I'}'}\!: \cN_{I;I'}'\lra V_{I'}\,.$$
The map $\Psi_{I;I'}$ is a regularization for $V_I$ in~$V_{I'}$.
As explained in \cite[Section~2.2]{SympDivConf}, $\Psi_I$ determines
an isomorphism
\BE{wtPsiIIdfn_e}
 \fD\Psi_{I;I'}\!:  \pi_{I;I'}^*\cN_{I;I-I'}\big|_{\cN_{I;I'}'}
\lra \cN_XV_{I'}\big|_{V_{I'}\cap\Im(\Psi_I)}\EE
of vector bundles covering~$\Psi_{I;I'}$ and
respecting the natural decompositions of 
$\cN_{I;I-I'}\!=\!\cN_XV_{I'}|_{V_I}$ and $\cN_XV_{I'}$.
By the last assumption in Definition~\ref{smreg_dfn}, 
$$\fD\Psi_{I;I'}\big|_{\pi_{I;I'}^*\cN_{I;I-I'}|_{V_I}}\!=\!\id\!:\,
\cN_{I;I-I'}\lra \cN_XV_{I'}|_{V_I}$$
under the canonical identification of $\cN_{I;I-I'}$ with $\cN_XV_{I'}|_{V_I}$.

\begin{dfn}\label{TransCollregul_dfn}
Let $X$ be a manifold and  $\{V_i\}_{i\in S}$ be a transverse 
collection of submanifolds of~$X$. 
\begin{enumerate}[label=(\arabic*),leftmargin=*]

\item A \sf{regularization for $\{V_i\}_{i\in S}$ in~$X$} 
is a system of regularizations $(\Psi_I)_{I\subset S}$ 
for $\{V_i\}_{i\in S}$ in~$X$ such~that
\BE{overlap_e}
\fD\Psi_{I;I'}\big(\Dom(\Psi_I)\big)
=\Dom(\Psi_{I'})\big|_{V_{I'}\cap\Im(\Psi_I)}, \quad
\Psi_I=\Psi_{I'}\circ\fD\Psi_{I;I'}|_{\Dom(\Psi_I)}\EE
for all $I'\!\subset\!I\!\subset\!S$.

\item\label{sympregul_it} 
Suppose in addition that $V_i$ is a closed submanifold of~$X$ of codimension~2 for every $i\!\in\!S$
and \hbox{$\om\!\in\!\Symp(X,\{V_i\}_{i\in S})$}. 
An \sf{$\om$-regularization for $\{V_i\}_{i\in S}$ in~$X$}  is a~tuple
$$(\cR_I)_{I\subset S} \equiv  
\big((\rho_{I;i},\na^{(I;i)})_{i\in I},\Psi_I\big)_{I\subset S}$$
such that $\cR_I$ is an $\om$-regularization for~$V_I$ in~$X$ for each $I\!\subset\!S$,
$(\Psi_I)_{I\subset S}$ is a regularization for $\{V_i\}_{i\in S}$ in~$X$,
and the induced vector bundle isomorphisms~\eref{wtPsiIIdfn_e}
are product Hermitian isomorphisms for all $I'\!\subset\!I\!\subset\!S$.

\end{enumerate}
\end{dfn}

\vspace{.1in}

\noindent
If $(\cR_I)_{I\subset S}$ is a regularization for $\{V_i\}_{i\in S}$ in~$X$, 
then 
\BE{SCDcons_e2}
\Psi_{I;I''}=\Psi_{I';I''}\circ\fD\Psi_{I;I'}\big|_{\cN_{I;I''}'}\,, 
\quad
\fD\Psi_{I;I''}=\fD\Psi_{I';I''}\circ
\fD\Psi_{I;I'}\big|_{\pi_{I;I''}^*\cN_{I;I-I''}|_{\cN_{I;I''}'}}\EE
for all $I''\!\subset\!I'\!\subset\!I\!\subset\!S$.\\

\noindent
Suppose $\{X_I\}_{I\in\cP^*(N)}$ is a transverse configuration in the sense 
of Definition~\ref{TransConf_dfn1}.
For each $I\!\in\!\cP^*(N)$ with $|I|\!\ge\!2$, let
$$\pi_I\!:\cN X_I\equiv \bigoplus_{i\in I}\cN_{X_{I-i}}X_I\lra X_I\,.$$
If in addition $I'\!\subset\!I$, let
$$\pi_{I;I'}\!:\cN_{I;I'}\equiv 
  \bigoplus_{i\in I-I'}\!\!\!\cN_{X_{I-i}}X_I\lra X_I\,.$$
By the last isomorphism in~\eref{cNorient_e2} with $X\!=\!X_i$ for any $i\!\in\!I'$ and 
$\{V_j\}_{j\in S}\!=\!\{X_{ij}\}_{j\in[N]-i}$, 
$$	\cN_{I;I'}=\cN_{X_{I'}}X_I \qquad\forall~I'\!\subset\!I\!\subset\![N],~I'\!\neq\!\eset.$$
Similarly to~\eref{cNtot_e}, there are canonical identifications
$$\cN_{I;I-I'}=\cN X_{I'}|_{X_I}, \quad
\cN X_I=\pi_{I;I'}^*\cN_{I;I-I'}=\pi_{I;I'}^*\cN X_{I'}
\qquad\forall~I'\!\subset\!I\!\subset\![N];$$
the first and last identities above hold if $|I'|\!\ge\!2$.

\begin{dfn}\label{TransConfregul_dfn}
Let $N\!\in\!\Z^+$ and $\X\!=\!\{X_I\}_{I\in\cP^*(N)}$ be a transverse configuration
as in Definition~\ref{TransConf_dfn1}.
\begin{enumerate}[label=(\arabic*),leftmargin=*]

\item A \sf{regularization for $\X$} is a tuple 
$(\Psi_{I;i})_{i\in I\subset[N]}$, 
where for each $i\!\in\!I$ the tuple $(\Psi_{I;i})_{I\in\cP_i(N)}$
is a regularization for $\{X_{ij}\}_{j\in[N]-i}$ in~$X_i$ in the sense of
Definition~\ref{TransCollregul_dfn}, such that
\BE{SCCregCond_e0}
\Psi_{I;i_1}\big|_{\cN_{I;i_1i_2}\cap\Dom(\Psi_{I;i_1})}
=\Psi_{I;i_2}\big|_{\cN_{I;i_1i_2}\cap\Dom(\Psi_{I;i_2})}\EE
for all $i_1,i_2\!\in\!I\!\subset\![N]$.

\item\label{SCCreg_it} 
Suppose in addition that $X_{ij}$ is a closed submanifold of~$X_i$ of codimension~2
for all $i,j\!\in\![N]$ distinct and $(\om_i)_{i\in[N]}\!\in\!\Symp^+(\X)$. 
An \sf{$(\om_i)_{i\in[N]}$-regularization for~$\X$} is a~tuple
\BE{SCCregdfn_e0}
\fR\equiv (\cR_I)_{I\in\cP^*(N)} \equiv
\big(\rho_{I;i},\na^{(I;i)},\Psi_{I;i}\big)_{i\in I\subset[N]}\EE
such that $(\Psi_{I;i})_{i\in I\subset[N]}$ is a  regularization 
for $\X$ and for each $i\!\in\![N]$ the~tuple
$$\big((\rho_{I;j},\na^{(I;j)})_{j\in I-i},\Psi_{I;i}\big)_{I\in\cP_i(N)}$$
is an $\om_i$-regularization for $\{X_{ij}\}_{j\in[N]-i}$ in $X_i$
in the sense of Definition~\ref{TransCollregul_dfn}\ref{sympregul_it}.

\end{enumerate}
\end{dfn}

\vspace{.1in}

\noindent
For a smooth family $(\om_t)_{t\in B}$ of symplectic forms 
in $\Symp(X,\{V_i\}_{i\in S})$,
Definition~\ref{TransCollregul_dfn}\ref{sympregul_it} naturally extends 
to provide a notion of 
 \sf{$(\om_t)_{t\in B}$-family of regularizations for $\{V_i\}_{i\in S}$ in~$X$};
see \cite[Definition~2.12(2)]{SympDivConf}.
For a smooth family of symplectic structures $(\om_{t;i})_{t\in B,i\in[N]}$ 
on~$\X$, 
Definition~\ref{TransConfregul_dfn}\ref{SCCreg_it} similarly extends 
to provide a notion of \sf{$(\om_{t;i})_{t\in B,i\in[N]}$-family of regularizations for~$\X$};
see \cite[Definition~2.15(2)]{SympDivConf}.
The first extension topologizes the set $\Aux(X,\{V_i\}_{i\in S})$ 
of pairs $(\om,(\cR_I)_{I\subset S})$ consisting of a symplectic structure~$\om$ 
on $\{V_i\}_{i\in S}$ in~$X$
and an $\om$-regularization $(\cR_I)_{I\subset S}$ for $\{V_i\}_{i\in S}$ in~$X$.
The second extension topologizes the set $\Aux(\X)$ of pairs $((\om_i)_{i\in[N]},\fR)$ 
consisting of a symplectic structure $(\om_i)_{i\in[N]}$ on~$\X$
and an $(\om_i)_{i\in[N]}$-regularization~$\fR$ for~$\X$.\\

\noindent
The existence of regularizations requires the symplectic divisors $V_i\!\subset\!X$
and $X_{ij}\!\subset\!X_i$ to meet $\om$-orthogonally and $\om_i$-orthogonally,
respectively, which is rarely the case.
However, \cite[Theorems~2.13,2.17]{SympDivConf} imply~that the projections 
\BE{AuxtoSympNC_e}\begin{aligned}
\Aux\big(X,\{V_i\}_{i\in S}\big) &\lra \Symp^+\big(X,\{V_i\}_{i\in S}\big),
& (\om,\fR)&\lra\om,\\
\Aux(\X) &\lra \Symp^+(\X),
& \big((\om_i)_{i\in[N]},\fR\big)&\lra(\om_i)_{i\in[N]},
\end{aligned}\EE
are weak homotopy equivalences and thus ensure a virtual kind of existence
whenever $\{V_i\}_{i\in S}$ is an SC symplectic divisor in
the sense of Definition~\ref{SCD_dfn} and
$(X_{\eset},(\om_i)_{i\in[N]})$ is an NC symplectic variety 
in the sense of Definition~\ref{SCC_dfn}.

\section{Normal crossings symplectic divisors}
\label{NCD_sec}

\noindent
NC~divisors are spaces that are locally SC~divisors.
This local perspective makes it fairly straightforward to define NC divisors and notions of regularizations.
NC~divisors can also be viewed as analogues of SC~divisors for immersions instead of submanifolds.
This global perspective leads to a more succinct notion of regularizations for 
NC~divisors and fits better with some applications.

\subsection{Local perspective}
\label{NCDloc_subs}

\noindent
Definitions~\ref{NCD_dfn} and~\ref{NCDregul_dfn} below  
locally correspond to Definitions~\ref{SCD_dfn} and~\ref{TransCollregul_dfn}\ref{sympregul_it}, respectively.

\begin{dfn}\label{NCsubsp_dfn}
Let $X$ be a manifold.
A subspace $V\!\subset\!X$ is a \sf{normal crossings} (or \sf{NC}) \sf{divisor} 
if for every $x\!\in\!X$ there exist an open neighborhood~$U$ of~$x$ in~$X$ and 
a finite transverse collection $\{V_i\}_{i\in S}$ of closed submanifolds  of~$U$
of codimension~2 such~that 
$$V\cap U= \bigcup_{i\in S}\!V_i\,.$$
\end{dfn}

\begin{dfn}\label{NCD_dfn}
Let $(X,\om)$ be a symplectic manifold.
A subspace  $V\!\subset\!X$ is an \sf{NC symplectic divisor in~$(X,\om)$} if 
for every $x\!\in\!X$ there exist $U$ and $\{V_i\}_{i\in S}$ as
in Definition~\ref{NCsubsp_dfn} such that
 $\{V_i\}_{i\in S}$ is an SC symplectic divisor in~$(U,\om|_U)$.
\end{dfn}

\noindent
By Definition~\ref{NCsubsp_dfn}, 
every NC divisor $V\!\subset\!X$ is a closed subspace.
So is its \sf{singular locus} $V_{\prt}\!\subset\!V$ consisting of the points $x\!\in\!V$
such that there exist $U$ and $\{V_i\}_{i\in S}$ as in Definition~\ref{NCsubsp_dfn} and 
$I\!\subset\![N]$ with $|I|\!=\!2$ and $V_I\!\ni\!x$.
For an NC divisor $V\!\subset\!X$, denote by $\Symp^+(X,V)$ the space 
of all symplectic forms~$\om$ on~$X$ so that $V$ is an NC symplectic divisor in~$(X,\om)$.
An SC symplectic divisor in the sense of Definition~\ref{SCD_dfn}
is an NC symplectic divisor, as we can take $U\!=\!X$ for every $x\!\in\!X$.\\

\noindent
Let $V\!\subset\!X$ be an NC divisor.
For each \sf{chart} $(U,\{V_i\}_{i\in S})$ as in Definition~\ref{NCsubsp_dfn}
and each $x\!\in\!U$, let
$$S_x=\big\{i\!\in\!S\!:\,x\!\in\!V_i\big\}\,.$$
If $(U',\{V_i'\}_{i\in S'})$ is another chart for $V$ in~$X$ and $x\!\in\!U\!\cap\!U'$, 
there exist a neighborhood~$U_x$ of~$x$ in $U\!\cap\!U'$ and a bijection
\BE{NCDoverlap_e0}h_x\!:S_x\lra S_x'  \qquad\hbox{s.t.}\quad
V_i\!\cap\!U_x=V_{h_x(i)}'\!\cap\!U_x~~\forall\,i\!\in\!S_x\,.\EE
We also denote by $h_x$ the induced bijection $\cP(S_x)\!\lra\!\cP(S_x')$.
By~\eref{NCDoverlap_e0}, 
$$\cN_XV_I\big|_{V_I\cap U_x}
=\cN_XV_{I'}'\big|_{V_{I'}'\cap U_x}
\qquad\forall~I\subset S_x,~I'\!=\!h_x(I)\,.$$
Suppose
$$\big(\cR_I\big)_{I\subset S} \equiv  
\big((\rho_{I;i},\na^{(I;i)})_{i\in I},\Psi_I\big)_{I\subset S}, \quad
\big(\cR_I'\big)_{I\subset S'} \equiv  
\big((\rho_{I;i}',\na'^{(I;i)})_{i\in I},\Psi_I'\big)_{I\subset S'}$$
are an $\om|_U$-regularization for $\{V_i\}_{i\in S}$ in~$U$ 
and an $\om|_{U'}$-regularization for $\{V_i'\}_{i\in S'}$ in~$U'$. 
We define 
$$\big(\cR_y\big)_{I\subset S}\cong_X\big(\cR_I'\big)_{I\subset S'}$$
if for every $x\!\in\!U\!\cap\!U'$ there exist $U_x$ and~$h_x$ as in~\eref{NCDoverlap_e0} 
such~that 
\begin{gather*}
\big(\rho_{I;i},\na^{(I;i)}\big)\big|_{V_I\cap U_x}
=\big(\rho_{I';i'}',\na'^{(I';i')}\big)\big|_{V_{I'}\cap U_x},\\
\Psi_I=\Psi_{I'}'
\quad\hbox{on}\quad
\Dom(\Psi_I)|_{V_I\cap U_x}\cap \Dom(\Psi_{I'}')|_{V_{I'}\cap U_x}
\end{gather*}
for all $i\!\in\!I\!\subset\!S_x$, 
$i'\!\equiv\!h_x(i)\in I'\!\equiv\!h_x(I)$.

\begin{dfn}\label{NCDregul_dfn}
Let $X$ be a manifold, $V\!\subset\!X$ be an NC divisor,
and $(U_y,\{V_{y;i}\}_{i\in S_y})_{y\in\cA}$ be a collection of charts for~$V$ in~$X$
as in Definition~\ref{NCsubsp_dfn}.
\begin{enumerate}[label=(\arabic*),leftmargin=*]

\item\label{NCDregul_it1} 
If $\om\!\in\!\Symp^+(X,V)$,  an \sf{$\om$-regularization for $V$ in~$X$} 
is a~collection
\BE{NCDregul_e1} \fR\equiv (\cR_{y;I})_{y\in\cA,I\subset S_y} \equiv  
\big((\rho_{y;I;i},\na^{(y;I;i)})_{i\in I},\Psi_{y;I}\big)_{y\in\cA,I\subset S_y}\EE
such that $(\cR_{y;I})_{I\subset S_y}$ is an $\om|_{U_y}$-regularization 
for $\{V_{y;i}\}_{i\in S_y}$ in~$U_y$ for each $y\!\in\!\cA$  and 
\BE{NCDregul_e2}\big(\cR_{y;I}\big)_{I\subset S_y}\cong_X\big(\cR_{y';I}\big)_{I\subset S_{y'}}
\qquad \forall\,y,y'\!\in\!\cA\,.\EE

\item\label{NCDregul_it2}  If $B$ is a manifold, possibly with boundary, and
$(\om_t)_{t\in B}$ is a smooth family of symplectic forms in $\Symp^+(X,V)$,
 an \sf{$(\om_t)_{t\in B}$-family of regularizations for $V$ in~$X$}
is a smooth family of~tuples $(\cR_{t;y;I})_{t\in B,y\in\cA,I\subset S_y}$
such that $(\cR_{t;y;I})_{y\in\cA,I\subset S_y}$ 
is an $\om_t$-regularization for $V$ in~$X$ for each $t\!\in\!B$  
and $(\cR_{t;y;I})_{t\in B,I\subset S_y}$
is an $(\om_t|_{U_y})_{t\in B}$-family of regularizations 
for $\{V_{y;i}\}_{i\in S_y}$ in~$U_y$  for each $y\!\in\!\cA$.

\end{enumerate}
\end{dfn}

\vspace{.1in}

\noindent
Suppose $X$, $V$, and $(U_y,\{V_{y;i}\}_{i\in S_y})_{y\in\cA}$ are
as in Definition~\ref{NCDregul_dfn} and $(\om_t)_{t\in B}$ is a family 
of symplectic forms in $\Symp^+(X,V)$.
We define two $(\om_t)_{t\in B}$-families of regularizations  
for~$V$ in~$X$  to be \sf{equivalent},
$$\big(\cR_{t;y;I}^{(1)}\big)_{t\in B,y\in\cA,I\subset S_y}\cong
\big(\cR_{t;y;I}^{(2)}\big)_{t\in B,y\in\cA,I\subset S_y},$$
if they agree on the level of germs.
This means that for every $y\!\in\!\cA$ the families 
$$(\cR_{t;y;I}^{(1)}\big)_{t\in B,I\subset S_y} \qquad\hbox{and}\qquad
\big(\cR_{t;y;I}^{(2)}\big)_{t\in B,I\subset S_y}$$
of regularizations for $\{V_{y;i}\}_{i\in S_y}$ in~$U_y$
agree on the level of germs as formally defined  just before
\cite[Theorem~2.13]{SympDivConf}.

\begin{thm}\label{NCD_thm}
Let $X$, $V$, and $(U_y,\{V_{y;i}\}_{i\in S_y})_{y\in\cA}$ be
as in Definition~\ref{NCDregul_dfn} and
$X^*\!\subset\!X$ be an open subset such that $\ov{X^*}\!\cap\!V_{\prt}\!=\!\eset$.
Suppose
\begin{enumerate}[label=$\bullet$,leftmargin=*]
\item $B$ is a compact manifold, possibly with boundary, 

\item $N(\prt B),N'(\prt B)$ are neighborhoods of $\prt B$ in~$B$
such that $\ov{N'(\prt B)}\!\subset\!N(\prt B)$,

\item $(\om_t)_{t\in B}$ is a  family of symplectic forms 
in $\Symp^+(X,V)$,

\item  $(\cR_{t;y;I})_{t\in N(\prt B),y\in\cA,I\subset S_y}$ is 
an $(\om_t)_{t\in N(\prt B)}$-family of regularizations for $V$ in~$X$.
\end{enumerate}
Then there exist a smooth family $(\mu_{t,\tau})_{t\in B,\tau\in\bI}$ of
1-forms on~$X$ such~that 
\begin{gather*}
\mu_{t,0}=0, \quad 
\supp\big(\mu_{\cdot,\tau}\big)\subset 
\big(B\!-\!N'(\prt B)\big)\!\times\!(X\!-\!X^*)
\qquad \forall~t\!\in\!B,\,\tau\!\in\!\bI,\\
\big(\om_{t,\tau}\equiv\om_t\!+\!\nd\mu_{t,\tau}\big)
\in \Symp^+\big(X,V\big) \qquad \forall~t\!\in\!B,\,\tau\!\in\!\bI,
\end{gather*}
and  an $(\om_{t,1})_{t\in B}$-family 
$(\wt\cR_{t;y;I})_{t\in B,y\in\cA,I\subset S_y}$ of regularizations 
 for $V$ in~$X$ such~that
$$(\wt\cR_{t;y;I})_{t\in N'(\prt B),y\in\cA,I\subset S_y} \cong 
(\cR_{t;y;I})_{t\in N'(\prt B),y\in\cA,I\subset S_y}\,.$$
\end{thm}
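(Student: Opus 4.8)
The plan is to reduce Theorem~\ref{NCD_thm} to its already-established SC counterpart, \cite[Theorem~2.13]{SympDivConf}, applied chart by chart, and then to patch the resulting local families of regularizations and local corrections into global ones using a partition of unity argument. First I would extract from the hypothesis $\ov{X^*}\cap V_\prt=\eset$ the essential structural fact: away from $X^*$ we must do genuine work, but the support of everything we construct will be confined to $X-X^*$, and on a neighborhood of $V_\prt$ the divisor is genuinely ``crossing'' so the SC theory applies locally there. I would fix a locally finite refinement of the cover $\{U_y\}_{y\in\cA}$ by charts $(U_y,\{V_{y;i}\}_{i\in S_y})$, together with a subordinate partition of unity $\{\chi_y\}$, arranged so that only finitely many charts meet the compact set $\supp(1-\chi_{X^*})$ relevant to the problem, and so that on each $U_y$ Definition~\ref{SCD_dfn} holds for the finite transverse collection $\{V_{y;i}\}_{i\in S_y}$ in $(U_y,\om_t|_{U_y})$ for all $t\in B$.

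The core step is to run the SC argument of \cite[Theorem~2.13]{SympDivConf} on each chart $U_y$ with the compact parameter space $B$, the pair of collars $N(\prt B)\supset N'(\prt B)$, the family $(\om_t|_{U_y})_{t\in B}$, and the given boundary family $(\cR_{t;y;I})_{t\in N(\prt B),I\subset S_y}$. That theorem supplies, for each $y$, a smooth family $(\mu^{(y)}_{t,\tau})$ of $1$-forms on $U_y$ with $\mu^{(y)}_{t,0}=0$, supported in $(B-N'(\prt B))\times(U_y-X^*)$, keeping $\{V_{y;i}\}_{i\in S_y}$ an SC symplectic divisor under $\om_t+\nd\mu^{(y)}_{t,\tau}$, together with an extended family of regularizations agreeing (on germs) with $(\cR_{t;y;I})$ over $N'(\prt B)$. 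To globalize, I would assemble $\mu_{t,\tau}=\sum_y \bar\chi_y\,\mu^{(y)}_{t,\tau}$ for a suitable partition of unity $\{\bar\chi_y\}$ (chosen after the $U_y$ are fixed and shrunk if necessary so that the local constructions are compatible on overlaps in the germ sense guaranteed by $\cong_X$), check that $\om_{t,\tau}=\om_t+\nd\mu_{t,\tau}$ still lies in $\Symp^+(X,V)$ — this is a pointwise nondegeneracy-and-positivity check that holds because near any point the sum involves only finitely many charts and the SC condition of Definition~\ref{SCD_dfn} is an open condition on the symplectic form — and verify the support constraint, which is immediate from $\supp\mu^{(y)}_{t,\tau}\subset(B-N'(\prt B))\times(U_y-X^*)$. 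The extended regularizations $\wt\cR_{t;y;I}$ are then taken to be the ones produced by the SC theorem in each chart, and the compatibility conditions~\eref{NCDregul_e2} defining an $(\om_{t,1})_{t\in B}$-regularization for $V$ in $X$, i.e.\ the relations $\cong_X$ on overlaps, are inherited from the uniqueness/germ statement in \cite[Theorem~2.13]{SympDivConf} together with the original overlap relations for $(\cR_{t;y;I})$.

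The main obstacle, and the place where real care is needed, is the \emph{compatibility of the per-chart outputs on overlaps} $U_y\cap U_{y'}$. The SC theorem is applied independently on $U_y$ and on $U_{y'}$, and a priori there is no reason the resulting corrections $\mu^{(y)}$ and $\mu^{(y')}$, or the extended regularizations, should agree where the charts overlap; naively summing them with a partition of unity fixes the $1$-form but not the regularizations, since a regularization is not a linear object. The resolution is to exploit that the conclusion only asks for an $(\om_{t,1})_{t\in B}$-family of regularizations in the NC sense of Definition~\ref{NCDregul_dfn}, which is precisely a collection of per-chart SC regularizations subject to the germ-level compatibility $\cong_X$ — it does not require a single globally coherent object. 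So one does not need the $\mu^{(y)}$ to agree; one needs (i) the summed $\mu_{t,\tau}$ to preserve $\Symp^+(X,V)$, which is local and hence automatic, and (ii) the final regularization in each chart to be constructed \emph{with respect to the common global form} $\om_{t,1}$ rather than $\om_t|_{U_y}$. To arrange (ii) I would either run the SC theorem on each chart using the already-globalized family $\om_{t,\tau}|_{U_y}$ in an ordered induction over the (locally finite, countable) index set — fixing the regularization on previously treated overlaps as a boundary constraint, much as one proves existence of partitions of unity or Riemannian metrics by induction over a locally finite cover — or, more cleanly, invoke the relative form of \cite[Theorem~2.13]{SympDivConf} with the overlap data as part of the prescribed boundary family, extending over the $N'(\prt B)$-collar and over the already-fixed overlap charts simultaneously. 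Once this inductive bookkeeping is set up, each step is a verbatim application of the SC theorem, and the verification of the support, symplecticity, and germ-compatibility conclusions is routine.
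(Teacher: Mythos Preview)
Your primary approach of summing $\mu_{t,\tau}=\sum_y \bar\chi_y\,\mu^{(y)}_{t,\tau}$ has a genuine gap: $\nd\mu_{t,\tau}=\sum_y\nd\bar\chi_y\wedge\mu^{(y)}_{t,\tau}+\sum_y\bar\chi_y\,\nd\mu^{(y)}_{t,\tau}$ is \emph{not} a convex combination of the local exact corrections, and the cross terms $\nd\bar\chi_y\wedge\mu^{(y)}_{t,\tau}$ are uncontrolled. The claim that $\om_{t,\tau}\in\Symp^+(X,V)$ is ``local and hence automatic'' is not justified: openness of the SC condition does not help without a smallness bound on the $\mu^{(y)}$, and the SC theorem provides none. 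So partition-of-unity patching works neither for the 1-forms nor for the regularizations.

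Your fallback---an ordered induction over a locally finite cover, feeding the already-constructed data on overlaps back in as a constraint---is essentially the paper's method, but with two differences worth noting. First, the paper does not prove Theorem~\ref{NCD_thm} directly: it reduces it to Theorem~\ref{NCC_thm} via Example~\ref{NCDvsC_eg} (an NC divisor yields an NC variety), and then proves Theorem~\ref{NCC_thm} by the inductive scheme in Section~\ref{NCCpf_sec}. Second, the inductive step invokes \cite[Proposition~5.3]{SympDivConf} rather than \cite[Theorem~2.13]{SympDivConf}, and works with the auxiliary notion of \emph{weak} regularization (Definition~\ref{LocalRegul_dfn}), which drops the first domain condition in~\eref{overlap_e}; this is what makes the overlap constraint tractable as input. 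The 1-forms are never summed: at step~$y^*$ a new $\mu'$ is produced with support contained in $U_{y^*}$, extended by zero to~$X$, and then \emph{concatenated} with the previous family by reparametrizing the $\tau$-interval (via cutoff functions $\be_1,\be_2$), so that outside~$U_{y^*}$ the form is unchanged and inside~$U_{y^*}$ the SC conclusion applies directly. Your sketch would become correct if you replaced the partition-of-unity sum by this support-and-concatenate mechanism and made precise what ``relative'' input the per-chart SC step accepts---which is exactly what the weak-regularization framework and \cite[Proposition~5.3]{SympDivConf} provide.
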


\vspace{.1in}

\noindent
Definition~\ref{NCDregul_dfn}\ref{NCDregul_it2} topologizes the set $\Aux(X,V)$
of pairs $(\om,\fR)$ consisting of a symplectic structure~$\om$ 
on an NC divisor~$V$ in~$X$
and an $\om$-regularization $\fR$ for~$V$ in~$X$.
Theorem~\ref{NCD_thm} above, which is the direct analogue of \cite[Theorem~2.13]{SympDivConf}
for arbitrary NC divisors, implies that the first projection in~\eref{AuxtoSympNC_e}
is a weak homotopy equivalence in this general setting as well.
Similarly to the situation with \cite[Theorems~2.13,2.17]{SympDivConf},
Theorem~\ref{NCD_thm} is implied by Theorem~\ref{NCC_thm};
see the paragraph after \cite[Theorem~2.13]{SympDivConf} and Example~\ref{NCDvsC_eg}.

\subsection{Global perspective}
\label{NCDgl_subs}

\noindent
We now give an equivalent global description of the notions introduced in
Section~\ref{NCDloc_subs}.
We do so by viewing an NC symplectic divisor in the sense of 
Definition~\ref{NCD_dfn} as the image of a transverse immersion~$\io$ with certain properties.\\

\noindent
For any map $\io\!:\wt{V}\!\lra\!X$ and $k\!\in\!\Z^{\geq 0}$, let
\BE{tViotak_e}
\wt{V}_{\io}^{(k)}=\big\{(x,\wt{v}_1,\ldots,\wt{v}_k)\!\in\!X\!
\times\!(\wt{V}^k\!-\!\De_{\wt{V}}^{(k)})\!:\,\io(\wt{v}_i)\!=\!x~\forall\,i\!\in\![k]\big\},\EE
where $\De_{\wt{V}}^{(k)}\!\subset\!\wt{V}^k$ is the big diagonal
(at least two of the coordinates are the same).
Define
\begin{gather}\label{iotak_e}  
\io_k\!:\wt{V}_{\io}^{(k)}\lra  X, \qquad \io_k(x,\wt{v}_1,\ldots,\wt{v}_k)=x,\\
\label{Xiotak_e}
V_{\io}^{(k)}=\io_k(\wt{V}_{\io}^{(k)})
=\big\{x\!\in\!X\!:\,\big|\io^{-1}(x)\big|\!\ge\!k\big\}.
\end{gather}
For example,
$$\wt{V}_{\io}^{(0)},V_{\io}^{(0)}=X, \qquad
\wt{V}_{\io}^{(1)}\approx\wt{V}, \qquad  V_{\io}^{(1)}=\io(\wt{V}).$$

\vspace{.2in}

\noindent
For $k',k\!\in\!\Z^{\ge0}$ and $i\!\in\!\Z^+$ with $i,k'\!\le\!k$, define
\begin{alignat}{2}
\label{whiokk_e}
\wt\io_{k;k'}\!:\wt{V}_{\io}^{(k)}&\lra\wt{V}_{\io}^{(k')}, &\quad 
\wt\io_{k;k'}(x,\wt{v}_1,\ldots,\wt{v}_k)&=(x,\wt{v}_1,\ldots,\wt{v}_{k'}),\\
\label{iokcjdfn_e}
\wt\io_{k;k-1}^{(i)}\!: \wt{V}_{\io}^{(k)}&\lra\wt{V}_{\io}^{(k-1)}, &\quad
\wt\io_{k;k-1}^{(i)}(x,\wt{v}_1,\ldots,\wt{v}_k)&=(x,\wt{v}_1,\ldots,\wt{v}_{i-1},\wt{v}_{i+1},\ldots,\wt{v}_k),\\
\label{iokjdfn_e}
\wt\io_k^{(i)}\!:  \wt{V}_{\io}^{(k)} &\lra \wt{V}, &\quad 
\wt\io_k^{(i)}(x,\wt{v}_1,\ldots,\wt{v}_k)&=\wt{v}_i.
\end{alignat}
For example, 
\begin{alignat*}{2}
\wt\io_{k;k'}\!=\!\wt\io_{k'+1;k'}^{(k'+1)}\!\circ\!\ldots\!\circ\!\wt\io_{k;k-1}^{(k)}
\!&:\wt{V}_{\io}^{(k)}\lra\wt{V}_{\io}^{(k')}, &\qquad
\wt\io_{k;1}\!\approx\!\wt\io_k^{(1)}\!&:
\wt{V}_{\io}^{(k)}\lra\wt{V}_{\io}^{(1)}\!\approx\!\wt{V}, \\
\wt\io_{k;0}\!=\!\io_k\!&:\wt{V}_{\io}^{(k)}\lra\wt{V}_{\io}^{(0)}\!=\!X, &\qquad
\wt\io_{1;0}\!\approx\!\io\!&:\wt{V}_{\io}^{(1)}\!\approx\!\wt{V}\lra X.
\end{alignat*}
We define an $\bS_k$-action on $\wt{V}_{\io}^{(k)}$ by requiring that 
\BE{SkVk_e} \wt\io_k^{(i)}=\wt\io_k^{(\si(i))}\!\circ\!\si\!: 
\wt{V}_{\io}^{(k)} \lra \wt{V}\EE
for all $\si\!\in\!\bS_k$ and $i\!\in\![k]$.
The diagrams
\BE{Vkdiag_e}\begin{split}
\xymatrix{\wt{V}_{\io}^{(k)~{}} \ar[rr]^{\wt\io_k^{(i)}} \ar[d]^{\wt\io_{k;k-1}^{(i)}}  
\ar@/_2pc/[dd]_{\wt\io_{k;k'}}  \ar@/^1pc/[rrdd]^{\io_k}
&& \wt{V} \ar[dd]^{\io}  \\
\wt{V}_{\io}^{(k-1)} \ar[rrd]^<<<<<<{\!\io_{k-1}}  
\ar@{-->}[d]^{\wt\io_{k-1;k'}} &&  \\
\wt{V}_{\io}^{(k')}  \ar[rr]^{\io_{k'}}& &X}
\end{split} \hspace{.5in}
\begin{split}
\xymatrix{\wt{V}_{\io}^{(k)}\ar[rr]^{\si} \ar[d]_{\wt\io_{k;k-1}^{(i)}}&&
\wt{V}_{\io}^{(k)}\ar[d]^{\wt\io_{k;k-1}^{(\si(i))}} \\
\wt{V}_{\io}^{(k-1)} \ar[rr]^{\si_i}&&  \wt{V}_{\io}^{(k-1)}}
\end{split}\EE
of solid arrows then commute; the entire first diagram commutes if $i\!>\!k'$.\\ 

\noindent
A smooth map $\io\!:\wt{V}\lra\!X$ is an \sf{immersion} if
the differential $\nd_x\io$ of~$\io$ at~$x$ is injective for all $x\!\in\!\wt{V}$.
This implies~that 
$$\codim\,\io\equiv\dim X-\dim V\ge0.$$
Such a map has a well-defined normal bundle,
$$\cN\io\equiv \io^*TX\big/\Im(\nd\io)\lra \wt{V}\,.$$
If $\io$ is a closed immersion, then the subsets $V_{\io}^{(k)}\!\subset\!X$
and $\wt{V}_{\io}^{(k)}\!\subset\!X\!\times\!\wt{V}^k$ are closed.\\

\noindent
An immersion $\io\!:\wt{V}\lra\!X$  is \sf{transverse} if the homomorphism
\BE{TransImmVerHom_e}
T_xX\oplus\bigoplus_{i=1}^k T_{\wt{v}_i}\wt{V}\lra \bigoplus_{i=1}^kT_xX, \quad
\big(w,(w_i)_{i\in[k]}\big)\lra \big(w\!+\!\nd_{\wt{v}_i}\io(w_i)\big)_{i\in[k]}\,,\EE
is surjective for all $(x,\wt{v}_1,\ldots,\wt{v}_k)\!\in\!\wt{V}_{\io}^{(k)}$ and $k\!\in\!\Z^+$.
By the Inverse Function Theorem, in such a case
\begin{enumerate}[label=$\bullet$,leftmargin=*]

\item each $\wt{V}_{\io}^{(k)}$ is a submanifold of $X\!\times\!\wt{V}^k$,

\item\label{hatimerssion_it}  the maps $\wt\io_{k;k-1}$ in \eref{whiokk_e} and
the maps~\eref{iokcjdfn_e} are transverse immersions,

\item the homeomorphisms~$\si$ determined by the elements of~$\bS_k$ 
as in~\eref{SkVk_e} are diffeomorphisms.

\end{enumerate}
By the commutativity of the upper and middle triangles in the first diagram in~\eref{Vkdiag_e}, 
the inclusion of $\Im(\nd\io_k)$ into $\wt\io_k^{(i)*}\Im(\nd\io)$ and
the homomorphism~$\nd\io_{k-1}$  induce homomorphisms
\BE{iodecompmaps_e}  
\cN\io_k\lra \io_k^{(i)*}\cN\io, \quad
\cN\wt\io_{k;k-1}^{(i)}\lra \cN\io_k \qquad\forall~i\!\in\![k].\EE
By the Inverse Function Theorem, the resulting homomorphisms
\BE{ImmcNorient_e2}
\cN\io_k\lra \bigoplus_{i\in[k]}\!\wt\io_k^{(i)*}\cN\io \qquad\hbox{and}\qquad
\cN\wt\io_{k;k-1}^{(i)}\lra \wt\io_k^{(i)*}\cN\io~~~\forall\,i\!\in\![k]\EE
are isomorphisms; 
they correspond to the first two isomorphisms in~\eref{cNorient_e2} 
if $\wt{V}$ is the disjoint union of submanifolds $V_i\!\subset\!X$. 
For $\si\!\in\!\bS_k$ and $i\!\in\![k]$,
the homomorphisms~$\nd\si$ and~$\nd\si_i$ of the second diagram in~\eref{Vkdiag_e}
induces an isomorphism
\BE{Djsi_e}D_i\si\!:\cN\wt\io_{k;k-1}^{(i)}\lra \cN\wt\io_{k;k-1}^{(\si(i))}\EE
covering~$\si$.

\begin{lmm}\label{NCD_lmm}
Let $X$ be a manifold.
A subset $V\!\subset\!X$ is an NC divisor in the sense of Definition~\ref{NCsubsp_dfn}
if and only if $V$ is the image of a closed transverse immersion $\io\!:\wt{V}\!\lra\!X$
of codimension~2.
\end{lmm}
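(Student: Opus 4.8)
The plan is to prove the two implications separately, using the local transverse collections on one side and the local charts for an immersion on the other.

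\textbf{From immersion to NC divisor.}
Suppose $V=\io(\wt{V})$ for a closed transverse immersion $\io\colon\wt{V}\lra X$ of codimension~2. Fix $x\in X$. If $x\notin V$, then since $V$ is closed we may take $U=X\setminus V$ and $S=\eset$. If $x\in V$, let $\io^{-1}(x)=\{\wt{v}_1,\ldots,\wt{v}_k\}$; this is a finite set because $\io$ is a closed immersion (so proper onto its image, and the fibers are discrete). For each $i$, since $\io$ is an immersion there is a neighborhood $\wt{U}_i$ of $\wt{v}_i$ in $\wt{V}$ such that $\io|_{\wt{U}_i}$ is an embedding onto a closed submanifold $V_i\subset U$ of codimension~2, where $U$ is a suitably small neighborhood of~$x$; shrinking $U$, we may assume $\io^{-1}(U)=\bigsqcup_{i=1}^k\wt{U}_i$ (here we again use that $\io$ is closed, so that no other sheet of $\wt{V}$ approaches~$x$) and hence $V\cap U=\bigcup_{i\in S}V_i$ with $S=[k]$. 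That $\{V_i\}_{i\in S}$ is a transverse collection follows immediately from the transversality hypothesis~\eref{TransImmVerHom_e} on~$\io$: for $J\subset S$ and $y\in V_J$, the point $(y,\wt{v}_j')_{j\in J}$ lies in $\wt{V}_{\io}^{(|J|)}$, where $\wt{v}_j'\in\wt{U}_j$ is the preimage of~$y$, and surjectivity of~\eref{TransImmVerHom_e} at that point is exactly the statement that $\{V_j\}_{j\in J}$ intersect transversely at~$y$ in the sense of~\eref{TransVerHom_e}. This gives the chart $(U,\{V_i\}_{i\in S})$ of Definition~\ref{NCsubsp_dfn}.

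\textbf{From NC divisor to immersion.}
Conversely, suppose $V\subset X$ is an NC divisor. Cover $X$ by charts $(U_y,\{V_{y;i}\}_{i\in S_y})_{y\in\cA}$ as in Definition~\ref{NCsubsp_dfn}. Inside each $U_y$ set
$$\wt{V}_y=\bigsqcup_{i\in S_y}V_{y;i},$$
with the tautological immersion $\io_y\colon\wt{V}_y\lra U_y$ sending the $i$-th copy of $V_{y;i}$ via inclusion; this is a transverse immersion of codimension~2 by the transversality of the collection $\{V_{y;i}\}_{i\in S_y}$ and the isomorphisms~\eref{cNorient_e2}. I would then build $\wt{V}$ by gluing the $\wt{V}_y$ along the overlaps, using precisely the bijections $h_x\colon S_x\lra S_x'$ of~\eref{NCDoverlap_e0}: over $U_x\subset U_y\cap U_{y'}$, identify the sheet of $\wt{V}_y$ over $V_{y;i}$ with the sheet of $\wt{V}_{y'}$ over $V_{y';h_x(i)}$. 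One checks that the resulting identifications satisfy the cocycle condition on triple overlaps — here one uses that the local branches of the NC divisor through a point are intrinsically defined (the collection $\{V_i\cap U_x\}_{i\in S_x}$ depends only on $V$ near $x$, not on the chart, since these are the irreducible components of the germ of $V$ at points of $V\setminus V_\prt$ near~$x$), so the $h_x$ are canonical and compose correctly. The glued space $\wt{V}$ is a (Hausdorff, second countable) manifold of dimension $\dim X-2$, the maps $\io_y$ glue to a map $\io\colon\wt{V}\lra X$ which is a transverse immersion of codimension~2 with image~$V$, and $\io$ is closed because $V$ is closed in $X$ and $\io$ is proper onto $V$ (each point of $V$ has finitely many preimages, lying in finitely many charts).

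\textbf{Main obstacle.}
The genuinely delicate point is the cocycle/Hausdorffness check in the second direction: one must verify that the branch-identifications $h_x$ from~\eref{NCDoverlap_e0} can be chosen compatibly over all of $U_y\cap U_{y'}$ (not just near a single $x$) and are consistent on triple overlaps, so that the quotient is a manifold rather than a non-Hausdorff space with doubled sheets. The key input making this work is that away from the singular locus $V_\prt$ the NC divisor is a disjoint union of honest submanifolds, so the local branches are unambiguous, and near a point of $V_\prt$ the transversality forces the branches in two overlapping charts to be matched by a unique bijection compatible with this generic description; I would make this precise by noting that $h_x$ is determined by the condition that corresponding branches have the same tangent space (equivalently the same germ) at each point of $V_I\setminus V_\prt$ accumulating at~$x$, which pins it down globally on each connected overlap and yields the cocycle condition automatically.
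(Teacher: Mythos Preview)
Your proof is correct and follows essentially the same approach as the paper: both directions are handled by local-to-global arguments, with the immersion-to-divisor direction virtually identical to the paper's part~(2). For the divisor-to-immersion direction, the paper likewise glues the local branches $V_{y;i}$ into a manifold $\wt{V}$, but it does so via a direct equivalence relation on points---identifying $(y,i,x)$ with $(y',i',x)$ whenever $V_{y;i}$ and $V_{y';i'}$ agree on a neighborhood of~$x$---which is transparently an equivalence relation and so sidesteps entirely the cocycle verification you flag as the ``main obstacle.'' The paper also explicitly takes a \emph{locally finite} cover at the outset, which you should mention: it is what guarantees that the quotient $\wt{V}$ is second countable and that the resulting immersion is closed.
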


\begin{proof}
(1) Let $V\!\subset\!X$ be an NC divisor.
Choose a locally finite open cover $\{U_y\}_{y\in\cA'}$ of~$X$ 
with associated transverse collections $\{V_{y;i}\}_{i\in S_y}$
as in Definition~\ref{NCsubsp_dfn}.
Let 
$$\wt{V}=\bigg(\bigsqcup_{y\in\cA'}\bigsqcup_{i\in S_y}
\{(y,i)\}\!\times\!V_{y;i}\bigg)\Big/\!\!\sim\,,$$
where we identify $(y,i,x)$ with $(y',i',x)$ if there exists a neighborhood~$U$
of~$x$ in $U_y\!\cap\!U_{y'}$ such that $V_{y;i}\!\cap\!U\!=\!V_{y';i'}\!\cap\!U$.
The Hausdorffness of~$X$ implies the Hausdorffness of~$\wt{V}$.
The latter inherits a smooth structure from the smooth structures of
the submanifolds $V_{y;i}\!\subset\!X$ (which necessarily agree on the overlaps).
The smooth~map
$$\io\!: \wt{V}\lra X, \qquad \big[y,i,x]\lra x,$$
is then a well-defined closed transverse immersion of codimension~2.\\

\noindent
(2) Let $\io\!:\wt{V}\!\lra\!X$ be a closed transverse immersion 
of codimension~2.
Given $x\!\in\!X$,  let 
$$\io^{-1}(x)=\big\{\wt{v}_1,\ldots,\wt{v}_k\big\}.$$
By \cite[Proposition~1.35]{Warner} and the closedness of~$\io$,
there exist a neighborhood $U\!\subset\!X$ of~$x$ and  neighborhoods
$\wt{V}_i\!\subset\!\wt{V}$ of~$\wt{v}_i$ with $i\!\in\![k]$ such~that 
$$\io^{-1}(U)=\bigsqcup_{i=1}^k\wt{V}_i\subset\wt{V}$$ 
and $\io|_{\wt{V}_i}$ is an embedding for every $i\!\in\![k]$.
Then, $\{\io(\wt{V}_i)\}_{i\in[k]}$ 
is a finite transverse collection of closed submanifolds of~$U$
of codimension~2 such~that 
$$V\cap U= \bigcup_{i=1}^k\io(\wt{V}_i)\,.$$
Thus, $\io(\wt{V})$ is an NC divisor in~$X$.
\end{proof}

\noindent
If $V\!\subset\!X$ is the NC divisor associated with a closed transverse immersion 
of codimension~2 as in Lemma~\ref{NCD_lmm}, then $V_{\prt}\!=\!V_{\io}^{(2)}$.\\

\noindent
If $\io\!:\wt{V}\!\lra\!X$ is any immersion between oriented manifolds of even dimensions,
the short exact sequence of vector bundles
\BE{ImmcNorient_e1} 0\lra T\wt{V}\stackrel{\nd\io}\lra \io^*TX\lra \cN\io\lra 0\EE
over $\wt{V}$ induces an orientation on~$\cN\io$. 
If in addition $\io$ is a transverse immersion,
the orientation on~$\cN\io$ induced by the orientations of~$X$ and~$\wt{V}$ induces 
an orientation on~$\cN\io_k$ via the first isomorphism in~\eref{ImmcNorient_e2}.
The orientations of~$X$ and~$\cN\io_k$ then induce an orientation on~$\wt{V}_{\io}^{(k)}$
via the short exact sequence~\eref{ImmcNorient_e1} with $\io\!=\!\io_k$ for all $k\!\in\!\Z^+$,
which we call \sf{the intersection orientation of~$\wt{V}_{\io}^{(k)}$}.
For $k\!=\!1$, it agrees with the original orientation of~$\wt{V}$ under the canonical identification 
$\wt{V}_{\io}^{(1)}\!\approx\!\wt{V}$.\\

\noindent
Suppose $(X,\om)$ is a symplectic manifold.
If $\io\!:\wt{V}\!\lra\!X$ is a transverse immersion such that $\io_k^*\om$ 
is a symplectic form on $\wt{V}_{\io}^{(k)}$ for all $k\!\in\!\Z^+$, then
each $\wt{V}_{\io}^{(k)}$ carries an orientation induced by $\io_k^*\om$,
which we  call the $\om$-orientation.
By the previous paragraph, the $\om$-orientations of~$X$ and $\wt{V}$ 
also induce intersection orientations on all~$\wt{V}_{\io}^{(k)}$.
By definition, the intersection and $\om$-orientations of~$\wt{V}_{\io}^{(1)}$
are the same.
The next statement follows readily from Definition~\ref{NCD_dfn} and
the proof of Lemma~\ref{NCD_lmm}.

\begin{prp}\label{NCD_prp}
Suppose $(X,\om)$ is a symplectic manifold, $V\!\subset\!X$ is an NC divisor, and
\hbox{$\io\!:\wt{V}\!\lra\!X$} is its normalization as in Lemma~\ref{NCD_lmm}.
Then  $V$ is an NC symplectic divisor in~$(X,\om)$
if and only if  $\io_k^*\om$ 
is a symplectic form on $\wt{V}_{\io}^{(k)}$ for all $k\!\in\!\Z^+$
and the intersection and $\om$-orientations of~$\wt{V}_{\io}^{(k)}$ are the same.
\end{prp}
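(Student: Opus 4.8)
The plan is to reduce the assertion to a local comparison, at each point $x\in X$, between the normalization $\io$ and a chart $(U,\{V_i\}_{i\in[k]})$ of the kind produced in the proof of Lemma~\ref{NCD_lmm}(2). Both conditions in the proposition are local: $\io_k^*\om$ is closed automatically, being the pullback of the closed form~$\om$, so its symplecticity amounts to pointwise nondegeneracy on $\wt V_\io^{(k)}$; and the equality of the intersection and $\om$-orientations is a condition on each connected component of $\wt V_\io^{(k)}$, hence may be checked near any point. I would therefore fix $x\in X$, write $\io^{-1}(x)=\{\wt v_1,\ldots,\wt v_k\}$, and choose, exactly as in the proof of Lemma~\ref{NCD_lmm}, an open $U\ni x$ and neighborhoods $\wt V_i\ni\wt v_i$ with $\io^{-1}(U)=\bigsqcup_{i=1}^k\wt V_i$, each $\io|_{\wt V_i}$ an embedding onto a closed codimension-$2$ submanifold $V_i\subset U$, and $\{V_i\}_{i\in[k]}$ a transverse collection with $V\cap U=\bigcup_iV_i$.

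The key observation is the following local identification. For $\eset\neq I=\{i_1,\ldots,i_m\}\subset[k]$, the subset of $\io_m^{-1}(U)\subset\wt V_\io^{(m)}$ consisting of tuples $(y,\wt w_1,\ldots,\wt w_m)$ with $\wt w_j\in\wt V_{i_j}$ for each $j$ is carried diffeomorphically onto $V_I\cap U$ by $\io_m$ (the $\wt w_j$ being recovered from $y$ via the embeddings $\io|_{\wt V_{i_j}}$); the $m!$ such components indexed by the orderings of a fixed $I$ are permuted by the $\bS_m$-action and all map onto $V_I\cap U$ under $\io_m$, and these components exhaust $\io_m^{-1}(U)$. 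Under such an identification $\io_m^*\om$ corresponds to $\om|_{V_I}$, so $\io_m^*\om$ is nondegenerate over $U$ iff every $V_I$ with $|I|=m$ is a symplectic submanifold of $(U,\om|_U)$. Moreover, applying to $\io|_{\io^{-1}(U)}\!:\io^{-1}(U)\lra U$ the remark following~\eref{ImmcNorient_e2} — that the isomorphisms there reduce to the first two isomorphisms of~\eref{cNorient_e2} when the domain is a disjoint union of submanifolds of the target — the intersection orientation of $\wt V_\io^{(m)}$ corresponds under $\io_m$ to the intersection orientation of $V_I$ from Section~\ref{SCdfn_subs}, while the $\om$-orientation of $\wt V_\io^{(m)}$ corresponds to the $\om$-orientation of $V_I$. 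Granting this, both implications are immediate. If the two conditions of the proposition hold, then for every $x$ each $V_I\subset U$ is symplectic and its intersection and $\om$-orientations agree, so $\{V_i\}_{i\in[k]}$ is an SC symplectic divisor in $(U,\om|_U)$ by Definition~\ref{SCD_dfn}, and hence $V$ is an NC symplectic divisor by Definition~\ref{NCD_dfn}. Conversely, if $V$ is an NC symplectic divisor, then the chart $(U,\{V_i\}_{i\in[k]})$ above is an SC symplectic divisor after shrinking $U$ and relabeling along an overlap bijection~\eref{NCDoverlap_e0} — which leaves the submanifolds $V_I$, their symplecticity, and their intersection and $\om$-orientations unchanged near $x$, since these depend only on $\om$ and on $V$ near~$x$; the local identification then shows $\io_m^*\om$ nondegenerate near every point of $\wt V_\io^{(m)}$ and the two orientations of $\wt V_\io^{(m)}$ equal there, hence everywhere.

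The step requiring the most care is the orientation comparison in the second paragraph: one must check that the intersection orientation of $\wt V_\io^{(k)}$, built inductively from the exact sequence~\eref{ImmcNorient_e1} and the isomorphisms~\eref{ImmcNorient_e2} with $\io=\io_k$, matches — branch by branch over $\io^{-1}(U)$, compatibly with the diffeomorphisms~$\si$ and the isomorphisms~\eref{Djsi_e} — the intersection orientation of $V_I$ built inductively from~\eref{cNorient_e2}, including the sign bookkeeping at zero-dimensional strata, where the $\om$-orientation is declared positive on both sides. This is precisely the content flagged in the sentence following~\eref{ImmcNorient_e2}, so it reduces to unwinding that remark along the disjoint-union chart $\io^{-1}(U)=\bigsqcup_i\wt V_i$; the remaining bookkeeping, namely the chart transfer via~\eref{NCDoverlap_e0} in the converse direction, is routine.
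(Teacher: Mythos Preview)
Your proof is correct and is precisely the argument the paper has in mind: the paper does not give a separate proof of this proposition, stating only that it ``follows readily from Definition~\ref{NCD_dfn} and the proof of Lemma~\ref{NCD_lmm}.'' You have simply unpacked that remark---using the local chart $(U,\{V_i\}_{i\in[k]})$ from Lemma~\ref{NCD_lmm}(2) to identify $\io_m^{-1}(U)$ with a disjoint union of copies of the $V_I$ and then matching the symplecticity and orientation conditions on both sides via~\eref{ImmcNorient_e2} and~\eref{cNorient_e2}---which is exactly what ``follows readily'' means here.
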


\noindent
Suppose $\io\!:\wt{V}\lra\!X$ is a transverse immersion and $k\!\in\!\Z^{\ge0}$.
For $k'\!\in\!\Z^{\ge0}$ with $k'\!\le\!k$, define
\BE{kk'bundles}
\pi_{k;k'}\!:\cN_{k;k'}\io=\!\bigoplus_{i\in [k]-[k']}\!\!\!\!\!\cN\wt\io_{k;k-1}^{(i)}
\lra \wt{V}_{\io}^{(k)}  \quad\hbox{and}\quad
\pi_{k;k'}^c\!:\cN_{k;k'}^c\io=\!\bigoplus_{i\in [k']}\!\cN\wt\io_{k;k-1}^{(i)}
\lra \wt{V}_{\io}^{(k)}\,.\EE
By the commutativity of the first diagram in~\eref{Vkdiag_e}, 
the homomorphisms~$\nd\tilde\io_{k-1;k'}$ and~$\nd\io_{k-1}$ induce homomorphisms
$$\cN_{k;k'}\io\lra \cN\wt\io_{k;k'} \qquad\hbox{and}\qquad 
\cN_{k;k'}^c\io\lra\wt\io_{k;k'}^{\,*}\cN\io_{k'}.$$
By the Inverse Function Theorem, the last two homomorphisms are isomorphisms;
they correspond to the last isomorphism in~\eref{cNorient_e2}
and the first identification in~\eref{cNtot_e}
if $\wt{V}$ is the disjoint union of submanifolds $V_i\!\subset\!X$. 
For each $\si\!\in\!\bS_k$, the isomorphisms~\eref{Djsi_e}  induce an isomorphism
\BE{cNioksplit_e}
D\si=(D_i\si)_{i\in [k]}\!: 
\cN\io_k\approx\cN_{k;0}\io\equiv\bigoplus_{i\in[k]}\!\cN\wt\io_{k;k-1}^{(i)}
\lra \bigoplus_{i\in[k]}\!\cN\wt\io_{k;k-1}^{(\si(i))}\equiv \cN_{k;0}\io\approx \cN\io_k\EE
lifting the action of $\si$ on~$\wt{V}_{\io}^{(k)}$;
the last isomorphism permutes the components of the direct sum.
In particular, the subbundles $\cN_{k;k'}\io$ and $\cN_{k;k'}^c\io$ of $\cN_{k;0}\io$
are  invariant under the action of the subgroup  
\hbox{$\bS_{k'}\!\times\!\bS_{[k]-[k']}$} of~$\bS_k$,
but not under the action of the full group~$\bS_k$.

\begin{dfn}\label{NCsmreg_dfn}
A \sf{regularization} for an immersion $\io\!:\wt{V}\!\lra\!X$ is a smooth map 
\hbox{$\Psi\!:\cN'\!\lra\!X$} from a neighborhood of~$\wt{V}$ in~$\cN\io$ 
such that for every $\wt{v}\!\in\!\wt{V}$,
there exist a neighborhood $U_{\wt{v}}$ of $\wt{v}$ in~$\wt{V}$ so that 
the restriction of $\Psi$ to $\cN'|_{U_{\wt{v}}}$ is a diffeomorphism onto its~image,
$\Psi(\wt{v})\!=\!\io(\wt{v})$, and the homomorphism
$$ \cN\io|_{\wt{v}}=T_{\wt{v}}^{\ver}\cN\io \lhra T_{\wt{v}}\cN\io
\stackrel{\nd_{\wt{v}}\Psi}{\lra} 
T_{\wt{v}}X\lra \frac{T_{\wt{v}}X}{\Im(\nd_{\wt{v}}\io)}\equiv\cN\io|_{\wt{v}}$$
is the identity.
\end{dfn}

\begin{dfn}\label{NCTransCollReg_dfn}
A \sf{system of regularizations for} a transverse immersion $\io\!:\wt{V}\!\lra\!X$
is a tuple $(\Psi_k)_{k\in\Z^{\ge0}}$, where each $\Psi_k$ 
is a regularization for the immersion~$\io_k$, such~that 
\begin{alignat}{2}
\label{NCPsikk_e}\Psi_k\big(\cN_{k;k'}\io\!\cap\!\Dom(\Psi_k)\big)
&=V_{\io}^{(k')}\!\cap\!\Im(\Psi_k)
&\quad&\forall~k\!\in\!\Z^{\ge0},~k'\!\in\![k],\\
\label{NCPsikk_e2}
\Psi_k&=\Psi_k\!\circ\!D\si\big|_{\Dom(\Psi_k)} 
&\quad&\forall~k\!\in\!\Z^{\ge0},\,\si\!\in\!\bS_k.
\end{alignat}
\end{dfn}

\vspace{.1in}

\noindent
The stratification condition~\eref{NCPsikk_e} replaces~\eref{Psikk_e} 
and implies~that there exists a smooth~map
\BE{Psikkprdfn_e}\begin{split}
&\Psi_{k;k'}\!: \cN_{k;k'}'\io\!\equiv\!\cN_{k;k'}\io\!\cap\!\Dom(\Psi_k)\lra\wt{V}_{\io}^{(k')}
\qquad\hbox{s.t.}\\ 
&\quad\Psi_{k;k'}\big|_{\wt{V}_{\io}^{(k)}}=\wt\io_{k;k'}\,,\quad
\Psi_k\big|_{\cN_{k;k'}'\io}=\io_{k'}\!\circ\!\Psi_{k;k'}\,;
\end{split}\EE
see Proposition~1.35 and Theorem~1.32 in~\cite{Warner}.
Similarly to~\eref{wtPsiIIdfn_e}, $\Psi_{k;k'}$ lifts to a (fiberwise) vector bundle isomorphism
\BE{fDPsikk_e}\fD\Psi_{k;k'}\!:  \pi_{k;k'}^*\cN_{k;k'}^c\io|_{\cN_{k;k'}'\io}
\lra\cN\io_{k'}\big|_{\Im(\Psi_{k;k'})}.\EE
This bundle isomorphism preserves the second splittings in~\eref{kk'bundles} and
is $\bS_{k'}$-equivariant and $\bS_{[k]-[k']}$-invariant. 
The condition~\eref{NCoverlap_e} below replaces~\eref{overlap_e} in the present setting.

\begin{dfn}\label{NCTransCollregul_dfn}
A \sf{refined regularization} for a transverse immersion $\io\!:\wt{V}\!\lra\!X$
is a system  $(\Psi_k)_{k\in\Z^{\ge0}}$ of regularizations for~$\io$ 
such~that 
\BE{NCoverlap_e}\fD\Psi_{k;k'}\big(\Dom(\Psi_k)\big)
=\Dom(\Psi_{k'})\big|_{\Im(\Psi_{k;k'})}, \quad
\Psi_k=\Psi_{k'}\circ\fD\Psi_{k;k'}|_{\Dom(\Psi_k)}\EE
whenever $0\!\le\!k'\!\le\!k$ and $k\!\in\!\Z^{\ge0}$.
\end{dfn}

\noindent
Suppose $(X,\om)$ is a symplectic manifold and
$\io\!:\wt{V}\!\lra\!X$ is an immersion so that $\io^*\om$
is a symplectic form on~$V$.
The normal bundle 
$$\cN\io\equiv \frac{\io^*TX}{\Im(\nd\io)}\approx \big(\Im(\nd\io)\big)^{\om}
\equiv \big\{w\!\in\!T_{\io(\wt{v})}X\!:\,\wt{v}\!\in\!\wt{V},\,
\om\big(w,\nd_x\io(w')\big)\!=\!0~
\forall\,w'\!\in\!T_{\wt{v}}V\big\}$$
of~$\io$ then inherits a fiberwise symplectic form~$\om|_{\cN\io}$ from~$\om$.
We denote the restriction of~$\om|_{\cN\io}$ to a subbundle $L\!\subset\!\cN\io$
by~$\om|_L$.

\begin{dfn}\label{NCsympreg1_dfn}
Suppose $(X,\om)$ is a symplectic manifold,
$\io\!:\wt{V}\!\lra\!X$ is an immersion so that $\io^*\om$ is a symplectic form on~$V$,
and
$$\cN\io=\bigoplus_{i\in I}L_i$$
is a fixed splitting into oriented rank~2 subbundles.
If $\om|_{L_i}$ is nondegenerate for every $i\!\in\!I$, then
an \sf{$\om$-regularization for~$\io$} is a tuple $((\rho_i,\na^{(i)})_{i\in I},\Psi)$, 
where $(\rho_i,\na^{(i)})$ is an $\om|_{L_i}$-compatible Hermitian structure on~$L_i$
for each $i\!\in\!I$ and $\Psi$ is a regularization for~$\io$, such that 
$$\Psi^*\om=\big(\io^*\om\big)_{(\rho_i,\na^{(i)})_{i\in I}}\big|_{\Dom(\Psi)}.$$
\end{dfn}

\begin{dfn}\label{NCSCDregul_dfn}
Suppose $(X,\om)$ is a symplectic manifold and
$\io\!:\wt{V}\!\lra\!X$ is a transverse immersion of codimension~2
so that $\io_k^*\om$ is a symplectic form on $\wt{V}_{\io}^{(k)}$ for each $k\!\in\!\Z^+$.
A \sf{refined $\om$-regularization for~$\io$} is a tuple 
\BE{NCSCDregul_e} \fR\equiv\big(\cR_k\big)_{k\in\Z^{\ge0}}\equiv
\big((\rho_{k;i},\na^{(k;i)})_{i\in[k]},\Psi_k\big)_{k\in\Z^{\ge0}}\EE
such that $(\Psi_k)_{k\in\Z^{\ge0}}$ is a refined regularization for~$\io$,
$\cR_k$ is an $\om$-regularization for~$\io_k$
with respect to the splitting~\eref{cNioksplit_e} for every $k\!\in\!\Z^{\ge0}$, 
\BE{NCSCDregul_e2}\big(\rho_{k;i},\na^{(k;i)}\big) = 
\big\{D_i{\si}\big\}^{\!*}\big(\rho_{k;\si(i)},\na^{(k;\si(i))}\big)
\quad\forall\,k\!\in\!\Z^{\ge0},\,\si\!\in\!\bS_k,\,i\!\in\![k],\EE
and the induced vector bundle isomorphisms~\eref{fDPsikk_e}
are product Hermitian isomorphisms for all $k'\!\le\!k$. 
\end{dfn}

\noindent
For a smooth family $(\om_t)_{t\in B}$ of symplectic forms on~$X$ satisfying
the condition of Definition~\ref{NCSCDregul_dfn},
the notion of refined $\om$-regularization naturally extends to a notion of 
\sf{$(\om_t)_{t\in B}$-family of refined regularizations for~$\io$}.
If $\io\!:\wt{V}\!\lra\!X$ corresponds to an NC divisor~$V$ in~$X$ as in Lemma~\ref{NCD_lmm}, 
then a refined $\om$-regularization for~$\io$ in the sense of Definition~\ref{NCSCDregul_dfn}
determines an $\om$-regularization for~$V$ in~$X$ in the sense of 
Definition~\ref{NCDregul_dfn}\ref{NCDregul_it1}.
Conversely, the local regularizations constituting an $\om$-regularization for~$V$ in~$X$
can be patched together to form a refined $\om$-regularization for~$\io$.
These relations also apply to families of regularizations.

\subsection{Resolutions of NC divisors}
\label{EquivalentDfn_subs}

\noindent
In this section, we show that the normalization $\io\!:\!\wt{V}\!\lra\!X$ 
of an NC divisor~$V$ in~$X$ provided by Lemma~\ref{NCD_lmm} is unique and 
has a unique extension to a sequence of closed immersions compatible 
with group actions in the sense of Definition~\ref{NCRes_dfn} below. 
This structure can be used to simplify the notation in some applications; 
see Example~\ref{Succ_eg}.

\begin{dfn}\label{NCRes_dfn}
Let $X$ be a smooth manifold and $V\!\subset\!X$.
A~\sf{resolution of~$V$} is a sequence of closed immersions
\BE{absNC_e}
\ldots \lra X^{(k)}\xlra{f_{k;k-1}}X^{(k-1)}
\xlra{f_{k-1;k-2}}\cdots \xlra{f_{2;1}}  X^{(1)}
\xlra{f_{1;0}} X^{(0)}\!\equiv\!X\EE 
of a fixed codimension $\fc\!\in\!\Z^+$ such~that $f_{1;0}(X^{(1)})\!=\!V$ and
\begin{enumerate}[label=(R\arabic*),leftmargin=*]

\item for every $k\!\in\!\Z^{\ge0}$, 
$X^{(k)}$ is a manifold with a free $\bS_k$-action,

\item\label{Requiv_it} for all $0\!\le\!k'\!\le\!k$, the map 
\BE{absNC_e2} 
f_{k;k'}\!\equiv\!f_{k'+1;k'}\!\circ\!\ldots\!\circ\!f_{k;k-1}\!: X^{(k)}\!\lra\!X^{(k')}\EE
is $\bS_{k'}$-equivariant, and
 
\item\label{inverse_it} for all $0\!\le\!k'\!\le\!k$ and 
$x\!\in\!X^{(k)}\!-\!f_{k+1;k}(X^{(k+1)})$,
$f_{k;k'}^{-1}(f_{k;k'}(x))=\bS_{[k]-[k']}\!\cdot\!x$. 

\end{enumerate}
\end{dfn}

\vspace{.15in}

\noindent
Let $k\!\in\!\Z^{\ge0}$ and $x\!\in\!X^{(k)}$.
Since any resolution~\eref{absNC_e} terminates, the number
$$k(x)\equiv\max\big\{k'\!\in\!\Z^{\ge0}\!:k'\!\ge\!k,\,|f_{k';k}^{-1}(x)|\!\neq\!\eset\big\}
\in\Z^{\ge0}$$
is well-defined.
By \ref{Requiv_it} and~\ref{inverse_it} in Definition~\ref{NCRes_dfn} with $k$ replaced by~$k(x)$, 
$$f_{k;k'}(g\!\cdot\!x)=f_{k;k'}\big(f_{k(x);k}(g\!\cdot\!\wt{x})\big)
\equiv f_{k(x);k'}(g\!\cdot\!\wt{x})=f_{k(x);k'}(\wt{x})=f_{k;k'}(x)$$
for all $g\!\in\!\bS_{[k]-[k']}$ and $\wt{x}\!\in\!f_{k(x);k}^{-1}(x)$.
Thus, the immersion~\eref{absNC_e2} is $\bS_{[k]-[k']}$-invariant.
Since the complement of $f_{k+1;k}(X^{(k+1)})$ in~$X^{(k)}$ is dense
this conclusion also follows directly from~\ref{inverse_it} in Definition~\ref{NCRes_dfn}
and the continuity of $f_{k;k'}$ and of the $\bS_k$-action on~$X^{(k)}$.\\

\noindent
Suppose  $0\!\le\!k'\!\le\!k$ and $x\!\in\!X^{(k)}$. 
If $g,g'\!\in\!\bS_k$, then
\BE{NCRes_e5} f_{k;k'}(g\!\cdot\!x)=f_{k;k'}(g'\!\cdot\!x)
\quad\Llra\quad g'g^{-1}\in\bS_{[k]-[k']}\,.\EE
The {\it if} implication is immediate from the conclusion of the previous paragraph.
The {\it only if} implication with $(k,x)$ replaced by $(k(x),\wt{x})$ as in
the previous paragraph follows from~\ref{inverse_it} in Definition~\ref{NCRes_dfn}
with~$k$ replaced by~$k(x)$.
Along with~\ref{Requiv_it}, this establishes the {\it only if} in~\eref{NCRes_e5}.\\

\noindent
If in addition $k'\!\in\!\Z^+$ and $y\!\in\!X^{(k)}$, then 
\BE{NCRes_e5b}f_{k;k'}(g\!\cdot\!x)=f_{k;k'}(g\!\cdot\!y)
~~\forall\,g\!\in\!\bS_k  \quad\Lra\quad x=y.\EE
This is immediate if $k'\!=\!k$ or if $x$ or $y$ is not in the image $f_{k+1;k}$.
Suppose
$$k'<k, \quad k(x)\le k(y), \quad \wt{x}\in f_{k(x);k}^{-1}(x), 
\quad\hbox{and}\quad \wt{y}\in f_{k(x);k}^{-1}(y).$$ 
The $\bS_k$-equivariance of~$f_{k(x);k}$ and the assumption in~\eref{NCRes_e5b}
then imply~that 
\BE{NCRes_e5c}f_{k(x);k'}(g\!\cdot\!\wt{x})=f_{k(x);k'}(g\!\cdot\!\wt{y})\EE
for all $g\!\in\!\bS_k\!\subset\!\bS_{k(x)}$.
By the $\bS_{[k(x)]-[k']}$-invariance of~$f_{k(x);k'}$, 
\eref{NCRes_e5c} also holds for all $g\!\in\!\bS_{[k(x)]-[k']}$.
Since $\bS_k$ and $\bS_{[k(x)]-[k']}$ generate~$\bS_{k(x)}$,
it follows that~\eref{NCRes_e5c} holds for all $g\!\in\!\bS_{k(x)}$.
Thus, $\wt{x}\!=\!\wt{y}$ and the conclusion in~\eref{NCRes_e5b} still holds.

\begin{eg}\label{NCRes_eg}
Let $\io\!:\wt{V}\!\lra\!X$ be a closed transverse immersion of codimension $\fc\!\in\!\Z^+$.
With the notation as in~\eref{whiokk_e}, the sequence 
\BE{absNCCan_e}
\cdots\lra \wt{V}_{\io}^{(k)}\xlra{\wt\io_{k;k-1}} \wt{V}^{(k-1)}_{\io}\xlra{\wt\io_{k-1;k-2}}
\ldots \xlra{\wt\io_{2;1}} \wt{V}^{(1)}_{\io}
\xlra{\wt\io_{1;0}} \wt{V}^{(0)}_{\io}\!=\!X \EE 
is a resolution of $V\!=\!\io(\wt{V})$.
By Proposition~\ref{2TermToSeq_prp} below, every resolution of~$V$ is canonically isomorphic
to~\eref{absNCCan_e}. 
\end{eg}

\begin{prp}\label{2TermToSeq_prp}
Suppose $X$ is a smooth manifold, $\io\!:\wt{V}\!\lra\!X$ is a closed transverse immersion
of codimension $\fc\!\in\!\Z^+$,
\eref{absNCCan_e} is the associated canonical resolution of $V\!=\!\io(\wt{V})$,
and \eref{absNC_e} is any resolution of~$V$.
Then there exist unique smooth maps~$h_k$
so that the  diagram
\BE{absNC_e10}\begin{split}
\xymatrix{\ldots\ar[r]& X^{(k)}\ar[d]^{h_k}\ar[rr]^{f_{k;k-1}}&&
X^{(k-1)}\ar[d]^{h_{k-1}}\ar[rr]^{f_{k-1;k-2}}&& \cdots \ar[r]^{f_{2;1}}& X^{(1)} 
\ar[d]^{h_1}\ar[r]^>>>>>{f_{1;0}}& X^{(0)}\!=\!X\ar[d]^{\id}\\
\ldots\ar[r] &\wt{V}_{\io}^{(k)}\ar[rr]^{\wt\io_{k;k-1}}&&
\wt{V}_{\io}^{(k-1)} \ar[rr]^{\wt\io_{k-1;k-2}}&& \cdots\ar[r]^{\wt\io_{2;1}} & 
\wt{V}_{\io}^{(1)} \ar[r]^>>>>>{\wt\io_{1;0}}& \wt{V}_{\io}^{(0)}\!=\!X} 
\end{split}\EE
commutes. These maps are $\bS_k$-equivariant diffeomorphisms.
\end{prp}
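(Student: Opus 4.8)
The plan is to construct the vertical maps $h_k$ inductively in $k$, starting from $h_0=\id$, and to verify at each stage that the square commutes, that $h_k$ is an $\bS_k$-equivariant diffeomorphism, and that the map just built is forced, i.e. uniquely determined by the commutativity requirement together with the equivariance/invariance properties of a resolution. The base case $k=0$ is trivial and $k=1$ is the statement that the normalization $\io\colon\wt V\!\lra\!X$ is uniquely determined by $V$; this I would extract from Lemma~\ref{NCD_lmm} and the local-embedding description in its proof, noting that $f_{1;0}\colon X^{(1)}\!\lra\!X$ is a closed codimension-$\fc$ immersion onto $V$ and so, locally over a point $x\!\in\!X$ with $f_{1;0}^{-1}(x)=\{x_1,\ldots,x_k\}$, splits as a disjoint union of embeddings whose images are exactly the local branches of $V$; matching branches gives a canonical local, hence global, identification $h_1$ with $\wt V_\io^{(1)}\!\approx\!\wt V$.

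For the inductive step I would assume $h_0,\ldots,h_{k-1}$ have been constructed with all the required properties and build $h_k\colon X^{(k)}\!\lra\!\wt V_\io^{(k)}$. The key idea is to read off the definition \eref{tViotak_e} of $\wt V_\io^{(k)}$: a point of $\wt V_\io^{(k)}$ is a tuple $(x,\wt v_1,\ldots,\wt v_k)$ with the $\wt v_i\!\in\!\wt V$ pairwise distinct and $\io(\wt v_i)=x$ for all $i$. Given $x\!\in\!X^{(k)}$, the natural candidate is
$$h_k(x)=\big(f_{k;0}(x),\,h_1(f_{k;1}(x)),\,h_1(f_{k;1}(\tau_{12}\!\cdot\!x)),\,\ldots,\,h_1(f_{k;1}(\tau_{1k}\!\cdot\!x))\big),$$
where $\tau_{1i}\!\in\!\bS_k$ is the transposition sending $1$ to $i$ (so the $i$-th entry records "the $i$-th branch seen from $x$"); more invariantly, the $i$-th entry is $h_1\circ f_{k;1}$ applied to a representative of the $\bS_{\{1\}\cup([k]-[k])}$-orbit... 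I would phrase it cleanly using the maps $f_k^{(i)}\colon X^{(k)}\!\lra\!X^{(1)}$ defined as $f_k^{(i)}=f_{k;1}\circ(\text{a fixed }\si\text{ with }\si(i)=1)$, which are well-defined by the $\bS_1$-invariance relation \eref{NCRes_e5} exactly as $\wt\io_k^{(i)}$ is built for the canonical resolution. One then checks: (a) the $k$ entries $h_1(f_k^{(i)}(x))$ are pairwise distinct branches — this uses property~\ref{inverse_it} of Definition~\ref{NCRes_dfn} (fibers of $f_{k;0}$ over points not in the deeper image are exactly $\bS_k$-orbits, so distinct $i$ give distinct preimages in $X^{(1)}$ generically, and density plus continuity extends this) — so that the tuple indeed lands in $X\!\times\!(\wt V^k\!-\!\De_{\wt V}^{(k)})$; (b) all $k$ entries map to $f_{k;0}(x)$ under $\io$, since $\io\circ h_1=f_{1;0}$ by the $k=1$ square and $f_{k;1}$ composed down equals $f_{k;0}$; hence $h_k(x)\!\in\!\wt V_\io^{(k)}$. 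The commutativity of the $k$-th square, $\wt\io_{k;k-1}\circ h_k=h_{k-1}\circ f_{k;k-1}$, follows by comparing entries: both sides forget the last branch, and on the remaining $k-1$ entries they agree by the inductive definition of $h_{k-1}$ in terms of the $f_k^{(i)}$. The $\bS_k$-equivariance of $h_k$ — with $\bS_k$ acting on $\wt V_\io^{(k)}$ by \eref{SkVk_e}, i.e. permuting the $\wt v_i$ — is immediate from the construction: permuting $x$ by $\si\!\in\!\bS_k$ permutes the labels $i$, hence the entries $h_1(f_k^{(i)}(x))$, in the same way, using \ref{Requiv_it} of Definition~\ref{NCRes_dfn}. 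Smoothness of $h_k$ is clear (it is built from smooth maps and the fixed smooth $\bS_k$-action), and locally $X^{(k)}\!\lra\!\wt V_\io^{(k)}$ is a map between submanifolds of the same dimension lying over $X$; to see it is a diffeomorphism I would construct the inverse symmetrically from the universal resolution \eref{absNCCan_e} applied to $\io=f_{1;0}$ — but that requires knowing $f_{1;0}$ is a closed transverse immersion, which it is by the $k=1$ identification — or, more directly, check $h_k$ is a bijection using \ref{inverse_it} (it identifies $\bS_k$-orbits with $\bS_k$-orbits compatibly with the labellings) and an immersion by comparing tangent spaces, both maps being embeddings locally over small opens of $X$.

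For uniqueness: any $h_k$ making the diagram commute must satisfy $\io\circ\wt\io_k^{(i)}\circ h_k=f_{1;0}\circ f_k^{(i)}$ forced from below, and since the $i$-th coordinate of a point of $\wt V_\io^{(k)}$ is recovered by $\wt\io_k^{(i)}$, the relation $\wt\io_k^{(i)}\circ h_k = h_1\circ f_k^{(i)}$ (which follows from commutativity plus the already-proven uniqueness of $h_1$, by a short diagram chase using that $h_1$ is injective) pins down every coordinate of $h_k(x)$; hence $h_k$ is unique. I would present this uniqueness argument right after the construction, so that the induction carries "uniqueness of $h_{k-1}$" as part of its hypothesis, which is what makes the coordinate-chase legitimate.

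The step I expect to be the main obstacle is part~(a) above — verifying that the $k$ branches $h_1(f_k^{(i)}(x))$ are genuinely distinct for \emph{every} $x\!\in\!X^{(k)}$, not just generic $x$. Over the dense open set $X^{(k)}\!-\!f_{k+1;k}(X^{(k+1)})$ this is exactly the content of \ref{inverse_it} of Definition~\ref{NCRes_dfn} (the fiber of $f_{k;0}$ is a single free $\bS_k$-orbit, so no two labels collide); the difficulty is at points of the deeper image, where a priori two branches recorded by distinct $i$ might coincide in $\wt V$. Here I would argue by continuity and the transversality of the canonical resolution: the locus where two of the $h_1(f_k^{(i)})$ agree is closed, disjoint from a dense set, and — using that $f_{k;k-1}$ is itself a closed immersion onto $V_\io^{(k-1)}\!=\!$ (a submanifold, by transversality of the $k\!=\!1$ normalization) — an open condition near any point of $X^{(k)}$, forcing it to be empty. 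Equivalently, one can first prove the Proposition for the canonical resolution \eref{absNCCan_e} itself (where distinctness is built into the definition \eref{tViotak_e}) and then transport; but since we do not yet know an arbitrary resolution \emph{is} \eref{absNCCan_e}, the cleanest route is the direct density-plus-openness argument, and getting its topological bookkeeping exactly right (in particular identifying the image $V_\io^{(k)}$ correctly as a stratified subspace) is where the real work lies. Everything else is routine diagram-chasing and Inverse-Function-Theorem bookkeeping, of the same flavour as the manipulations following Definition~\ref{NCRes_dfn} in the excerpt.
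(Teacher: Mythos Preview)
Your approach is essentially the same as the paper's: the same explicit formula
\[
h_k(x)=\big(f_{k;0}(x),\,f_{k;1}(\tau_{1,1}\!\cdot\!x),\,\ldots,\,f_{k;1}(\tau_{1,k}\!\cdot\!x)\big)
\]
(after identifying $X^{(1)}$ with $\wt V$ via $h_1$), the same equivariance check via transposition identities, and the same inductive verification that the image is $\wt V_\io^{(k)}$. Uniqueness in the paper is dispatched more quickly than in your sketch: since $\wt\io_{k;k-1}$ is injective off a lower-dimensional set, commutativity of the square alone forces $h_k$ once $h_{k-1}$ is fixed.

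The one place where the paper is sharper is exactly the point you flag as the main obstacle. Distinctness of the entries $f_{k;1}(\tau_{1,i}\!\cdot\!x)$ for \emph{all} $x\in X^{(k)}$ (not just generic $x$), and injectivity of $h_k$, are handled not by a density--plus--openness argument but by two purely combinatorial facts established \emph{before} the proof, namely \eref{NCRes_e5} and \eref{NCRes_e5b}. Both are proved by lifting an arbitrary $x\in X^{(k)}$ to a preimage $\wt x\in X^{(k(x))}$ in the deepest stratum containing it, applying~\ref{inverse_it} there (where it applies directly), and descending via the $\bS_{k'}$-equivariance~\ref{Requiv_it}. In particular \eref{NCRes_e5} with $k'=1$ gives $f_{k;1}(\tau_{1,i}\!\cdot\!x)=f_{k;1}(\tau_{1,j}\!\cdot\!x)\iff\tau_{1,j}\tau_{1,i}\in\bS_{[k]-[1]}\iff i=j$, for every $x$. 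Your proposed topological argument (``closed, disjoint from a dense set, and an open condition'') is not quite complete as stated --- closed with empty interior does not imply empty, and you have not justified why the coincidence locus should be open --- whereas the bootstrap-to-$k(x)$ argument is short and airtight. I would recommend replacing your density approach with this lift-and-descend lemma; once you have \eref{NCRes_e5} and \eref{NCRes_e5b}, everything else in your outline goes through exactly as written.
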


\begin{proof}
The immersions~$f_{1;0}$ and~$\wt\io_{1;0}$ are of the same codimension~$\fc$
by \cite[Exercise~6]{Warner}.
Thus, the dimensions of the manifolds~$X^{(k)}$ and~$\wt{V}_{\io}^{(k)}$
are the same for each $k\!\in\!\Z^+$.
Since the restriction
\BE{xx1xll_e2a}\wt\io_{k;k-1}\!:
\wt{V}_{\io}^{(k)}\!-\!\wt\io_{k+1;k}(\wt{V}_{\io}^{(k+1)})\lra
\wt\io_{k;k-1}\big(\wt{V}_{\io}^{(k)}\!-\!\wt\io_{k+1;k}\wt{V}_{\io}^{(k+1)})\big)
\subset\wt{V}_{\io}^{(k-1)}\EE
is injective and the dimension of~$\wt{V}_{\io}^{(k+1)}$
is strictly less than the dimension of~$X^{(k)}$,
there can be at most one map~$h_k$ 
so that the left-most square in~\eref{absNC_e10} commutes
with a specific choice of~$h_{k-1}$.
Below we construct~$h_1$ based on general geometric considerations and then
define each $h_k$ with $k\!\ge\!2$ by an explicit formula.\\

\noindent
Since the restriction~\eref{xx1xll_e2a} is injective for $k\!=\!1$, 
there is a unique~map 
\BE{xx1xll_e2b}h_1\!: X^{(1)}\!-\!f_{1;0}^{-1}\big(\wt\io_{2;0}(\wt{V}_{\io}^{(2)})\big)
\lra \wt{V}_{\io}^{(1)}\!-\!\wt\io_{2;1}(\wt{V}_{\io}^{(2)})
\subset\wt{V}_{\io}^{(1)}\EE
so that $f_{1;0}\!=\!\wt\io_{1;0}\!\circ\!h_1$ on the domain of~$h_1$.
Since the restriction~\eref{xx1xll_e2a} is an embedding, 
the map~\eref{xx1xll_e2b} is smooth by \cite[Theorem~1.38]{Warner}.
Since $f_{1;0}$ is an immersion, 
$$f_{1;0}^{-1}\big(\wt\io_{2;0}(\wt{V}_{\io}^{(2)})\big)\subset X^{(1)}$$ 
is the image of 
a smooth map from a manifold of a smaller dimension than~$X^{(1)}$.
Suppose \hbox{$x\!\in\!f_{1;0}^{-1}(\wt\io_{2;0}(\wt{V}_{\io}^{(2)}))$}.
The restriction of~$f_{1;0}$ to a sufficiently small neighborhood~$U_x$ of~$x$ is an embedding
and its image agrees with the image of an embedding of a neighborhood
an element of  $\wt\io_{1;0}^{-1}(f_{1;0}(x))$ in~$\wt{V}_{\io}^{(1)}$.
By \cite[Theorem~1.38]{Warner}, $h_1$ extends smoothly over~$U_x$ as 
a map to $\wt{V}_{\io}^{(1)}$.
We thus obtain a smooth map~$h_1$ so that the right-most square in~\eref{absNC_e10} commutes.
Since~$f_{1;0}$ is a closed immersion which is injective on $X^{(1)}\!-\!f_{2;1}(X^{(2)})$, 
so is~$h_1$.
Since the image of~$f_{1;0}$ contains~$V$, the image of~$h_1$ contains 
$\wt{V}_{\io}^{(1)}\!-\!\wt\io_{2;1}(\wt{V}_{\io}^{(2)})$;
thus, $h_1$ is surjective.
We conclude that~$h_1$ is a diffeomorphism.\\

\noindent
From now on, we identify $(X^{(1)},f_{1;0})$ with 
$(\wt{V}_{\io}^{(1)},\wt\io_{1;0})\!\approx\!(\wt{V},\io)$ via~$h_1$.
For each $k\!\in\!\Z^+$, we then~have
$$f_{k;1}\!:X^{(k)}\lra X^{(1)}\!=\!\wt{V}\,.$$
We denote~by
$$\pi_{k;k-1}\!:X\!\times\!\wt{V}^k\lra X\!\times\!\wt{V}^{k-1}$$
the projection to the first $k$ components; it restricts to the map~\eref{whiokk_e}.
For $i,j\!\in\![k]$, let $\tau_{i,j}\!\in\!\bS_k$ be the transposition interchaning~$i$ and~$j$.\\

\noindent 
For $k\!\in\!\Z^{\ge0}$, define
\BE{xx1xll_e2}
h_k\!: X^{(k)}\lra X\!\times\!\wt{V}^k, \quad 
h_k(x)=\big(f_{k;0}(x),f_{k;1}(\tau_{1,1}(x)),\ldots,
f_{k;1}(\tau_{1,k}(x))\big).\EE
In particular, $h_k$ is an immersion, $h_0\!=\!\id_X$, $h_1\!=\!\id_{\wt{V}}$, 
and the diagram
\BE{absNC_e10a}\begin{split}
\xymatrix{\ldots\ar[r]& X^{(k)}\ar[d]^{h_k}\ar[rr]^{f_{k;k-1}}&&
X^{(k-1)}\ar[d]^{h_{k-1}}\ar[rr]^{f_{k-1;k-2}}&& \cdots \ar[r]^{f_{2;1}}& X^{(1)} 
\ar[d]^{h_1}\ar[r]^{f_{1;0}}& X^{(0)}\ar[d]^{\id}\\
\ldots\ar[r] &X\!\times\!\wt{V}^k\ar[rr]^{\pi_{k;k-1}}&&
X\!\times\!\wt{V}^{k-1} \ar[rr]^{\pi_{k-1;k-2}}&& \cdots\ar[r]^{\pi_{2;1}} & 
X\!\times\!\wt{V} \ar[r]^{\pi_{1;0}}& X^{(0)}}
\end{split}\EE
commutes by $\bS_{k-1}$-equivariance of $f_{k;k-1}$.
Since 
\BE{absNC_e10c} \tau_{1,j}\tau_{1,i}=\tau_{i,j}\tau_{1,j} \qquad
\forall~i,j\!\in\![k]\!-\![1],\,i\!\neq\!j,\EE
the $\bS_{[k]-[1]}$-invariance of~$f_{k;1}$ implies~that 
$$h_k\big(\tau_{1,i}(x)\big)= \tau_{1,i}\!\cdot\!h_k(x)
\qquad\forall\,x\!\in\!X^{(k)},\,i\!\in\![k],\,k\!\in\!\Z^+\,.$$
Thus, $h_k$ is $\bS_k$-equivariant.
By~\eref{NCRes_e5b} with $k'\!=\!1$, $h_k$ is injective for all $k\!\ge\!1$.
Since $f_{k;0}\!\equiv\!\io\!\circ\!f_{k;1}$ is $\bS_k$-invariant, 
$$\io\big(f_{k;1}(\tau_{1,i}(x))\big)=f_{k;0}(x)
\qquad\forall\,x\!\in\!X^{(k)},\,i\!\in\![k],\,k\!\in\!\Z^+\,.$$
By~\eref{NCRes_e5}, $f_{k;1}(\tau_{1,i}(x))\!\neq\!f_{k;1}(\tau_{1,j}(x))$
for all $i\!\neq\!j$.
Thus, $h_k(x)\!\in\!\wt{V}^{(k)}_{\io}$.\\

\noindent
It remains to show that the maps $\Im\,h_k\!=\!\wt{V}_{\io}^{(k)}$.
Since each $h_k$ is an injective immersion, this would imply that each $h_k$ is 
a diffeomorphism.
This is the case for $k\!=\!0,1$.
Suppose $k\!\ge\!2$ and $h_{k'}$ is a diffeomorphism for all $k'\!<\!k$.
Since~$f_{k;k-1}$ is a closed immersion, so is~$h_k$.
By the $\bS_2$-invariance of~$f_{2;0}$ and $\bS_{[k]-[k-2]}$-invariance of~$f_{k;1}$ for $k\!\ge\!3$,
$$h_{k-1}\big(f_{k;k-1}(X^{(k)})\big)\subset
\big\{\wt{v}\!\in\!\wt{V}_{\io}^{(k-1)}\!:
\big|\wt\io_{k-1;k-2}^{-1}(\wt\io_{k-1;k-2}(\wt{v}))\big|\!>\!1\big\}
=\wt\io_{k;k-1}\big(\wt{V}_{\io}^{(k)}\big).$$
The opposite inclusion holds by~\ref{inverse_it} in Definition~\ref{NCRes_dfn}.
Since $\wt\io_{k;k-1}\!\circ\!h_k\!=\!h_{k-1}\!\circ\!f_{k;k-1}$ and~\eref{xx1xll_e2a} is injective, 
it follows that the image of~$h_k$ contains 
$\wt{V}_{\io}^{(k)}\!-\!\wt\io_{k+1;k}(\wt{V}_{\io}^{(k+1)})$.
Thus, $\Im\,h_k\!=\!\wt{V}_{\io}^{(k)}$.
We conclude that~$h_k$ is a diffeomorphism from~$X^{(k)}$ to~$\wt{V}_{\io}^{(k)}$.
\end{proof}

\begin{eg}\label{Succ_eg}
Let $\io\!:\wt{V}\!\lra\!X$ be as in Example~\ref{NCRes_eg} with $\fc\!=\!2$.
For every $m\!\in\!\Z^{\geq 0}$, the $\bS_m$-equivariant immersion 
\BE{ktok-1_e}
\jo\!\equiv\!\wt\io_{m+1;m}\!: 
\wt{W}\!\equiv\!\wt{V}^{(m+1)}_\io \lra Y\!\equiv\!\wt{V}^{(m)}_\io\EE
is the normalization of an NC divisor $W\!\equiv\!\jo(\wt{W})$ in~$Y$.
This divisor is preserved by the $\bS_m$-action on~$\wt{V}^{(m)}$. 
By restricting the $\bS_{m+k}$-actions on $\wt{V}^{(m+k)}_{\io}$
to actions of the subgroups $\bS_{[m+k]-[m]}\!\approx\!\bS_k$, 
we obtain a resolution
\BE{absNCCanShort_e}
\ldots \xlra{\wt\io_{m+k+1;m+k}} \wt{V}^{(m+k)}_{\io} \xlra{\wt\io_{m+k;m+k-1}}\ldots 
\lra \wt{V}^{(m+1)}_{\io}\xlra{\wt\io_{m+1;m}}\wt{V}^{(m)}_{\io}\!=\!Y\EE 
of $W$ as in Definition~\ref{NCRes_dfn}.
By Proposition~\ref{2TermToSeq_prp}, \eref{absNCCanShort_e} is canonically isomorphic 
to the resolution
$$\ldots\xlra{\wt\jo_{k+1;k}} \wt{W}^{(k)}_{\jo}\xlra{\wt\jo_{k;k-1}}\ldots 
\lra \wt{W}^{(1)}_{\jo}\!\approx\!\wt{W} \xlra{\jo}Y,$$
with  $\wt{W}^{(k)}_{\jo}$ and $\wt\jo_{k;k-1}$ defined as in~\eref{tViotak_e} 
and~\eref{whiokk_e}, respectively.
An explicit identification is provided by the diffeomorphisms
$$\phi_k\!:\wt{W}^{(k)}_{\jo}\lra \wt{V}^{(m+k)}_{\io}, \quad
\phi_k\big(y,\wt{w}_1,\ldots,\wt{w}_k\big)=
\big(y,\wt\io_{m+1}^{(m+1)}(\wt{w}_1),\ldots,\wt\io_{m+1}^{(m+1)}(\wt{w}_k)\!\big)
\in \big(X\!\times\!\wt{V}^m\big)\!\times\!\wt{V}^k.$$
From such an identification, we obtain identifications 
\BE{cNapprox_e}
\cN\wt\jo_{k;k-1}^{(i)} \approx  
\cN\wt\io_{m+k;m+k-1}^{(m+i)}\qquad \forall~k\!\in\!\Z^+,~i\!\in\![k],\EE
of the normal bundles to the immersions defined by~\eref{iokcjdfn_e}.
Therefore, a refined $\om$-regularization~$\fR$ for the immersion~$\io$ 
as in Definition~\ref{NCSCDregul_dfn} induces an $\io_m^*\om$-regularization
$$\fR^{(m)}\approx\big((\rho_{m+k;m+i},\na^{(m+k;m+i)})_{i\in[k]},\Psi_{m+k;m}\big)_{k\in\Z^{\ge0}}$$
for the immersion~$\jo$, with $\Psi_{m+k;m}$ as in~\eref{Psikkprdfn_e}.
We use the above identifications in~\cite{SympNCStructures} to construct canonical isomorphisms
$$\io_m^*\cO_{X}(V) \approx\cO_{Y}(W)\otimes\!\bigotimes_{i\in[m]}\!\!\cN\wt\io_{m;m-1}^{(i)},
\quad
\io_m^*TX(-\log V) \approx TY(-\log W)\!\oplus\! \big(Y\!\times\!\C^m\!\big)$$
of vector bundles over~$Y$.
\end{eg}

\section{Normal crossings symplectic varieties}
\label{NCC_sec}

\noindent
NC~varieties are spaces that are locally varieties associated to SC~configurations.
They can also be described as global quotients along images of immersions with compatible
involutions on their domains.
These two perspectives are presented in Sections~\ref{NCCloc_subs} and~\ref{NCCgl_subs},
respectively, and are shown to be equivalent in Section~\ref{NCCcomp_subs}.
An alternative global characterization of regularizations for NC varieties is presented 
in Section~\ref{NCgl2_subs}.
The (virtual) existence theorem for regularizations for NC symplectic varieties,
Theorem~\ref{NCC_thm}, is justified in Section~\ref{NCCpf_sec}.
Section~\ref{eg_subs} presents some examples of NC divisors and varieties.

\subsection{Local perspective}
\label{NCCloc_subs}

\noindent
Suppose
\BE{XXprdfn_e}\X\equiv\big\{X_I\big\}_{I\in\cP^*(N)} \qquad\hbox{and}\qquad 
\X'\equiv\big\{X_I'\big\}_{I\in\cP^*(N')}\EE
are transverse configurations with associated spaces~$X_{\eset}$ and~$X_{\eset}'$
as in~\eref{Xesetdfn_e}.
A homeomorphism
\BE{esetvphdfn_e}\vph\!:X_{\eset}\lra X_{\eset}'\EE
is a \sf{diffeomorphism} if there exists 
a~map \hbox{$h\!:[N]\!\lra\![N']$} such that the~restriction
\BE{vphhdfn_e}\vph\!:X_i\lra X_{h(i)}'\EE
is a diffeomorphism between manifolds for every $i\!\in\![N]$.
This implies that $X_j'\!=\!\eset$ unless $j\!=\!h(i)$ for some $i\!\in\![N]$.
If $\vph$ is a diffeomorphism and $(\om_j')_{j\in[N']}$ is a symplectic structure on~$\X'$,
then 
$$\big(\om_i\big)_{i\in[N]}\equiv\vph^*\big((\om_j')_{j\in[N']}\big)
\equiv  \big(\vph|_{X_i}^*\om_{h(i)}'\big)_{i\in[N]}$$
is a symplectic structure on~$\X$.

\begin{dfn}\label{ImmTransConf_dfn0}
Let $X$ be a topological space. 
\begin{enumerate}[label=(\arabic*),leftmargin=*]

\item\label{NCchart_it} An \sf{NC chart for~$X$ around} a point~$x\!\in\!X$ is a tuple 
$(U,\X,\vph\!:U\!\lra\!X_{\eset})$,
where $U$ is an open neighborhood of $x$ in~$X$, 
$\X$ is a finite transverse configuration with associated quotient space~$X_{\eset}$,
and $\vph$ is a homeomorphism.

\item\label{NCatlas_it} 
An \sf{NC atlas} for $X$ is a maximal collection $(U_y,\X_y,\vph_y)_{y\in\cA}$
of charts for~$X$ such that for all $y,y'\!\in\!\cA'$ 
and $x\!\in\!U_y\!\cap\!U_{y'}$ there exists a neighborhood~$U_{yy';x}$ 
of~$x$ in~$U_y\!\cap\!U_{y'}$ so that the overlap~map
\BE{ImmTransConf_e2}
\vph_{yy';x}\!\equiv\!\vph_y\!\circ\!\vph_{y'}^{-1}\!: 
\vph_{y'}\big(U_{yy';x}\big)\lra \vph_y\big(U_{yy';x}\big)\EE
is a diffeomorphism.
\end{enumerate}
\end{dfn}

\begin{dfn}\label{ImmTransConf_dfn}
An~\sf{NC variety} is a (Hausdorff) paracompact second-countable topological
space~$X$ with an NC atlas $(U_y,\X_y,\vph_y)_{y\in\cA}$ with 
$\X_y\!=\!(X_{y;I})_{I\in\cP^*(N_y)}$ 
such that $X_{y;ij}$ is a closed submanifold of~$X_{y;i}$ of codimension~2
for all $i,j\!\in\![N_y]$ distinct.
\begin{enumerate}[label=(\arabic*),leftmargin=*]

\item\label{NCsympstr_it} A \sf{symplectic structure} on such an~NC variety 
is a tuple $(\om_{y;i})_{y\in\cA,i\in[N_y]}$,
where each $(\om_{y;i})_{i\in[N_y]}$ is a symplectic structure on~$\X_y$,
such~that 
$$\big(\om_{y';i}\big)_{i\in[N_{y'}]}\big|_{\vph_{y'}(U_{yy';x})}
= \vph_{yy';x}^{~*}\big((\om_{y;i})_{i\in[N_y]}\big)$$
for all $y,y'\!\in\!\cA$ and $x\!\in\!U_{yy';x}\!\subset\!U_y\!\cap\!U_{y'}$ 
as in Definition~\ref{ImmTransConf_dfn0}\ref{NCatlas_it}.

\item If $B$ is a manifold, possibly with boundary, 
a tuple $(\om_{t;y;i})_{t\in B,y\in\cA,i\in[N_y]}$
is a \sf{family of symplectic structures} on~$\X$ if 
$(\om_{t;y;i})_{y\in\cA,i\in[N_y]}$ is a symplectic structure on~$\X$ for each $t\!\in\!B$
and $(\om_{t;y;i})_{t\in B,i\in[N_y]}$ is a family of symplectic structures on~$\X_y$
for each~$y\!\in\!\cA$.
\end{enumerate}
\end{dfn}

\begin{dfn}\label{NCC_dfn}
An \sf{NC symplectic variety} is a pair consisting of an~NC variety~$X$
and a symplectic structure $(\om_{y;i})_{y\in\cA,i\in[N_y]}$
on~$X$ as in Definition~\ref{ImmTransConf_dfn}\ref{NCsympstr_it} such~that 
$(\om_{y;i})_{i\in[N_y]}\!\in\!\Symp^+(\X_y)$ for every $y\!\in\!\cA$.
\end{dfn}

\noindent
The symplectic variety~$X_{\eset}$ determined by an SC symplectic configuration~$\X$
as in Definition~\ref{SCC_dfn} is an NC symplectic variety in the sense of 
Definition~\ref{NCC_dfn}.
The corresponding NC~atlas is the maximal collection of NC~charts on~$X_{\eset}$
containing the chart $(X_{\eset},\X,\id_{X_{\eset}})$.\\

\noindent
The \sf{singular locus} of an~NC variety~$X$ as in Definition~\ref{ImmTransConf_dfn}
is the closed subspace
$$X_{\prt}\equiv \bigcup_{y\in\cA}\vph_y^{-1}\big(X_{y;\prt}\big)\subset X,$$
where $X_{y;\prt}\!\subset\!X_{y;\eset}$ is the subspace corresponding to~$\X_y$
as in~\eref{Xprtdfn_e}.
Let
$$\big(X_{\prt}\big)_{\prt}\equiv \bigcup_{y\in\cA}
\bigcup_{\begin{subarray}{c}I\subset[N_y]\\ |I|=3\end{subarray}}
\!\!\!\!\!\vph_y^{-1}\big(X_{y;I}\big)\subset X$$
denote the \sf{singular locus of~$X_{\prt}$}; it is also a closed subset.\\

\noindent
For $k\!\in\!\Z^{\ge0}$, a \sf{$k$-form on~$X$} is a tuple $\mu\!\equiv\!(\mu_{y;i})_{y\in\cA,i\in[N_y]}$,
where each $\mu_{y;i}$ is a $k$-form on~$X_{y;i}$ such~that 
\begin{alignat*}{2}
\mu_{y;i}\big|_{X_{y;ij}}&=\mu_{y;j}\big|_{X_{y;ij}}
&\qquad &\forall\,i,j\!\in\![N_y],\,y\!\in\!\cA, \\
\mu_{y';i}|_{\vph_{y'}(U_{yy';x})} &= \vph_{yy';x}^{~*}\,\mu_{y;h_{yy';x}(i)}
&\qquad &\forall\,i\!\in\![N_{y'}],\,y,y'\!\in\!\cA,\,x\!\in\!U_{yy';x}\!\subset\!U_y\!\cap\!U_{y'}\,,
\end{alignat*}
with $\vph_{yy';x}$ as in Definition~\ref{ImmTransConf_dfn0}\ref{NCatlas_it}
and with $h_{yy';x}$ being the associated map as in~\eref{vphhdfn_e}.
The~tuple
$$ \nd\mu\equiv\big(\nd\mu_{y;i}\big)_{y\in\cA,i\in[N_y]}$$
is then a $(k\!+\!1)$-form on~$X$.
Let
$$\supp(\mu)\equiv \ov{\bigcup_{y\in\cA}\bigcup_{i\in[N_y]} \!\!\!
\vph_y^{-1}\big(\supp(\mu_{y;i})\big)}\subset X$$
denote the \sf{support of~$\mu$}.
Denote by $\Symp^+(X)$ the space of all symplectic
structures~$\om$ on~$X$ so that 
$(X,\om)$ is an NC symplectic variety.\\

\noindent
Let $(\om_i)_{i\in[N]}$ and $(\om_i')_{i\in[N']}$ be symplectic structures on 
transverse configurations $\X$ and~$\X'$, respectively,
as in~\eref{XXprdfn_e}.
Suppose $\vph\!:U\!\lra\!U'$ is a symplectomorphism between open subsets 
of~$X_{\eset}$ and~$X_{\eset}'$ and
$h\!:[N]\!\lra\![N']$ is as in~\eref{vphhdfn_e} with the two sides replaced
by their intersections with~$U$ and~$U'$. 
We also denote by~$h$ the induced map from~$\cP(N)$ to~$\cP(N')$.
The diffeomorphism~$\vph$ determines isomorphisms
$$\nd\vph\!: \cN_{X_i}X_I\big|_{X_I\cap U}\lra \cN_{X_{i'}}X_{I'}\big|_{X_{I'}\cap U'} 
\qquad\forall\,i\!\in\!I\!\subset\![N],~i'\!\equiv\!h(i)\in I'\!\equiv\!h(I).$$
Suppose
$$\big(\cR_I\big)_{I\in\cP^*(N)} \equiv  
\big(\rho_{I;i},\na^{(I;i)},\Psi_{I;i}\big)_{i\in I\subset[N]}
\quad\hbox{and}\quad
\big(\cR_I'\big)_{I\in\cP^*(N')} \equiv  
\big(\rho_{I;i}',\na'^{(I;i)},\Psi_{I;i}'\big)_{i\in I\subset[N']}$$
are an $(\om_i)_{i\in[N]}$-regularization for~$\X$ and 
an $(\om_i')_{i\in[N']}$-regularization for~$\X'$, respectively.
We define 
$$\big(\cR_I\big)_{I\in\cP^*(N)}\cong_{\vph}\big(\cR_I'\big)_{I\in\cP^*(N')}$$
if 
\begin{gather*}
\big(\rho_{I;i},\na^{(I;i)}\big)\big|_{X_I\cap U}
=\big\{\nd\vph\big\}^*\big(\rho_{I';i'}',\na'^{(I';i')}\big),\\
\vph\!\circ\!\Psi_{I;i}
=\Psi_{I';i'}'\!\circ\!\nd\vph
\quad\hbox{on}\quad
\Dom(\Psi_{I;i})\cap \nd\vph^{-1}\big(\Dom(\Psi_{I';i'}')\big)
\end{gather*}
for all $i\!\in\!I\!\subset\![N]$, 
$i'\!\equiv\!h(i)\in I'\!\equiv\!h(I)$.

\begin{dfn}\label{NCCregul_dfn}
Let $X$ be an NC variety with an NC atlas $(U_y,\X_y,\vph_y)_{y\in\cA}$.
\begin{enumerate}[label=(\arabic*),leftmargin=*]

\item\label{NCCregul_it1} 
If $\om\!\equiv\!(\om_{y;i})_{y\in\cA,i\in[N_y]}$ is a symplectic structure on~$X$,
  an \sf{$\om$-regularization for $X$} is a~collection
$$(\fR_y)_{y\in\cA} \equiv (\cR_{y;I})_{y\in\cA,I\in\cP^*(N_y)} $$
such that $\fR_y$ is an $(\om_{y;i})_{i\in[N_y]}$-regularization for~$\X_y$
for each~$y\!\in\!\cA$ and   
\BE{NCCregulover_e}\fR_{y'}\big|_{\vph_{y'}(U_{yy';x})} \cong_{\vph_{yy';x}}
\fR_y\big|_{\vph_y(U_{yy';x})}\EE
for all $y,y'\!\in\!\cA$ and $x\!\in\!U_{yy';x}\!\subset\!U_y\!\cap\!U_{y'}$ 
as in Definition~\ref{ImmTransConf_dfn0}\ref{NCatlas_it}.

\item\label{NCCregul_it2} 
If $B$ is a manifold, possibly with boundary, 
and $(\om_{t;y;i})_{t\in B,y\in\cA,i\in[N_y]}$ is a family of symplectic structures on~$X$,
an \sf{$(\om_{t;y;i})_{t\in B,y\in\cA,i\in[N_y]}$-family of regularizations for~$X$} 
is a tuple $(\fR_{t;y})_{y\in\cA,t\in B}$ such that $(\fR_{t;y})_{y\in\cA}$
is an $(\om_{t;y;i})_{y\in\cA,i\in[N_y]}$-regularization for~$X$ for each~$t\!\in\!B$
and $(\fR_{t;y})_{t\in B}$ is an $(\om_{t;y;i})_{t\in B,i\in[N_y]}$-family of 
regularizations for~$\X_y$ for each~$y\!\in\!\cA$.
\end{enumerate}
\end{dfn}

\vspace{.1in}

\noindent
For  a family $(\om_t)_{t\in B}$ of symplectic structures on an NC variety~$X$,
we define two $(\om_t)_{t\in B}$-families of regularizations  
for~$X$ to be \sf{equivalent} if they agree on the level of germs 
as defined before \cite[Theorem~2.17]{SympDivConf}.

\begin{thm}\label{NCC_thm}
Let $X$ be an NC variety with an NC atlas $(U_y,\X_y,\vph_y)_{y\in\cA}$ and
$X^*\!\subset\!X$ be an open subset such that $\ov{X^*}\!\cap\!(X_{\prt})_{\prt}\!=\!\eset$.
Suppose
\begin{enumerate}[label=$\bullet$,leftmargin=*]
\item $B$, $N(\prt B)$, and $N'(\prt B)$ are as in Theorem~\ref{NCD_thm},

\item $(\om_t)_{t\in B}$ is a  family of symplectic structures 
in $\Symp^+(X)$,

\item $(\fR_{t;y})_{t\in N(\prt B),y\in\cA}$ is an $(\om_t)_{t\in N(\prt B)}$-family
of regularizations for~$X$.

\end{enumerate}
Then there exist a smooth family $(\mu_{t,\tau})_{t\in B,\tau\in\bI}$ of
1-forms on~$X$ such~that 
\BE{SCCom_e}\begin{aligned}
\mu_{t,0}=0, ~~ 
&\supp\big(\mu_{\cdot,\tau}\big)\subset 
\big(B\!-\!N'(\prt B)\big)\!\times\!(X\!-\!X^*),  &\qquad& \forall~t\!\in\!B,\,\tau\!\in\!\bI,\\
&\om_{t,\tau}\!\equiv\!\om_t\!+\!\nd\mu_{t,\tau}\in \Symp^+(X)
&\qquad& \forall~t\!\in\!B,\,\tau\!\in\!\bI,
\end{aligned}\EE
and an $(\om_{t,1})_{t\in B}$-family 
$(\wt\fR_{t;y})_{t\in B,y\in\cA}$ of regularizations  for~$X$ such~that
\BE{SCCom_e2}\big(\wt\fR_{t;y}\big)_{t\in N'(\prt B),y\in\cA} \cong 
\big(\fR_{t;y}\big)_{t\in N'(\prt B),y\in\cA}\,.\EE
\end{thm}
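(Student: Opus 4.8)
\noindent
Theorem~\ref{NCC_thm} is a relative, parametric extension result that in each chart of the NC atlas reduces to a special case of \cite[Theorem~2.17]{SympDivConf}; the work lies in carrying out the chartwise constructions compatibly over the overlaps. The plan is therefore to localize over the atlas and then patch. Since $X$ is paracompact and second-countable, I would first replace the maximal atlas $(U_y,\X_y,\vph_y)_{y\in\cA}$ by a countable locally finite subcollection $(U_k,\X_k,\vph_k)_{k\in\Z^+}$ still covering~$X$, fix a shrinking with $\ov{U_k'}\!\subset\!U_k$ and $\bigcup_kU_k'\!=\!X$, and observe that it suffices to construct the path $(\mu_{t,\tau})$ and the family $(\wt\fR_{t;k})$ over this subcollection: because $\om$-regularizations are determined by their germs and the compatibility~\eref{NCCregulover_e} is itself a germ condition, a germ-compatible family on the charts $U_k$ extends, uniquely up to the equivalence~$\cong$, to a family $(\wt\fR_{t;y})_{t\in B,y\in\cA}$ on the full atlas, the passage being a routine germ-theoretic gluing of domains.

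\noindent
On a single chart, $\vph_k$ identifies $U_k$ with an open subset of the space~$X_{k;\eset}$ associated to the SC configuration~$\X_k$; the family $(\om_t)_{t\in B}$ pulls back to a family of symplectic structures in $\Symp^+(\X_k)$, and the given regularizations over $N(\prt B)$ pull back to an $(\om_t)_{t\in N(\prt B)}$-family of regularizations for~$\X_k$. The hypothesis $\ov{X^*}\!\cap\!(X_\prt)_\prt\!=\!\eset$ is precisely the chartwise form of the hypothesis of \cite[Theorem~2.17]{SympDivConf}. I would then invoke the \emph{proof} of that theorem, in its relative form over $(B,N(\prt B),N'(\prt B))$ and with $\vph_k(U_k\!\cap\!X^*)$ playing the role of the protected open set, to extend the chart regularization over all of~$B$ at the cost of a path of exact perturbations of~$\om$ supported away from $N'(\prt B)$ and from~$X^*$; this step is purely local in $X$, since the underlying Moser-type vector field constructions can be localized by cutoffs.

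\noindent
The substantive point is patching these chartwise conclusions, which I would do by induction on~$k$. Having already produced, over $\bigcup_{j<k}U_j'$, a path $(\mu_{t,\tau})$ and an extended regularization, I would run the chart argument on~$U_k$ \emph{relative to} the closed set $\bigcup_{j<k}\ov{U_j'}$, treating the region where a regularization is already prescribed on the same footing as the boundary region $N(\prt B)$. This is where the ``seemingly weaker, but equivalent'' version of \cite[Definition~2.15(1)]{SympDivConf} enters: pulling back an already-constructed regularization through an overlap map $\vph_{jk;x}$ yields a tuple for which the on-the-nose identities built into Definition~\ref{TransConfregul_dfn}\ref{SCCreg_it} (e.g.~\eref{SCCregCond_e0}, and~\eref{overlap_e} for each component $X_{k;i}$) survive only up to germ, and the claimed equivalence upgrades such a tuple — after shrinking domains and a Moser-type homotopy supported in $X\!-\!X^*$ — to a genuine regularization agreeing with the given one on germs. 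Local finiteness of $(U_k)$ makes the infinite composition of perturbations locally finite, hence a well-defined smooth path $(\mu_{t,\tau})_{t\in B,\tau\in\bI}$ with $\mu_{t,0}\!=\!0$, $\supp(\mu_{\cdot,\tau})\!\subset\!(B-N'(\prt B))\!\times\!(X-X^*)$, and $\om_{t,\tau}\!\in\!\Symp^+(X)$ throughout, and the extended chart regularizations assemble, as in the first paragraph, into the required $(\wt\fR_{t;y})_{t\in B,y\in\cA}$ with~\eref{SCCom_e2}.

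\noindent
I expect the main obstacle to be exactly this inductive patching. Each local Moser deformation alters~$\om$ on the overlaps with neighboring charts, so the successive deformations must be arranged not to undo one another's work; and one must check, uniformly in~$t$, that the weaker notion of regularization is stable under the pull-backs and homotopies used, that the supports never meet $N'(\prt B)$ or~$X^*$, and that the germs produced on the charts $U_k$ satisfy~\eref{NCCregulover_e} over the whole atlas. Once the relative SC result of \cite[Theorem~2.17]{SympDivConf} is phrased so that ``the locus on which a regularization is already prescribed'' may be an arbitrary closed subset rather than only $N(\prt B)$, the induction closes; the details of this reduction, including the equivalence of the two notions of regularization, are carried out in Section~\ref{NCCpf_sec}.
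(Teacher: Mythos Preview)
Your proposal is correct and follows essentially the same approach as the paper: pass to a countable locally finite subatlas with a shrinking, replace regularizations by the weaker notion (which the paper formalizes explicitly as ``weak regularizations'' in Definitions~\ref{ConfRegulLoc_dfn}--\ref{LocalRegul_dfn} and then cuts back down via \cite[Lemma~5.8, Corollary~5.9]{SympDivConf}), and run an inductive chartwise extension, at each step invoking the local SC machinery relative to the region already handled and concatenating the resulting exact deformations. The only notable refinement in the paper is that the chartwise step is carried out by repeated applications of \cite[Proposition~5.3]{SympDivConf} (the basic building block underlying \cite[Theorem~2.17]{SympDivConf}) rather than by quoting Theorem~2.17 wholesale, and the concatenation of the successive $1$-form families is made explicit via reparametrizations~$\be_1,\be_2$ of~$\bI$; your description of the weaker notion as ``identities surviving only up to germ'' is slightly imprecise---the actual weakening is that the domain equality in~\eref{overlap_e} is dropped and the map equality is required only on the intersection of domains---but this does not affect the structure of the argument.
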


\vspace{.1in}

\noindent
This statement, which is the direct analogue of \cite[Theorem~2.17]{SympDivConf}
for arbitrary NC varieties, implies that the second projection in~\eref{AuxtoSympNC_e}
is a weak homotopy equivalence in this general setting as well.
The proof of \cite[Theorem~2.17]{SympDivConf} is mostly local in nature and readily extends
to Theorem~\ref{NCC_thm}; see Section~\ref{NCCpf_sec} for more details.

\subsection{Global perspective}
\label{NCCgl_subs}

\noindent
Suppose $\wt{X}$ and $\wt{V}$ are sets.
We call a pair
\BE{iopsidfn_e}\big(\io\!:\wt{V}\!\lra\!\wt{X},\psi\!:\wt{V}\!\lra\!\wt{V}\big)\EE
consisting of a map and an involution, 
i.e.~$\psi^2\!=\!\id_{\wt{V}}$, \sf{compatible}~if
\BE{iopsiprop_e1}\begin{split}
&\io\big(\psi(\wt{v})\big)\neq\io(\wt{v})~~\forall\,\wt{v}\!\in\! \wt{V},\quad
\wt{v},\wt{v}'\!\in\!\wt{V},\,\wt{v}\!\neq\!\wt{v}',\,
\io(\wt{v})=\io(\wt{v}')\Lra\io\big(\psi(\wt{v})\big)\neq\io\big(\psi(\wt{v}')\big),\\
&\qquad\psi\big(\io^{-1}\big(\{\wt{x}\}\!\cup\!\io(\psi(\io^{-1}(\wt{x})))\big)\big)
=\io^{-1}\big(\{\wt{x}\}\!\cup\!\io(\psi(\io^{-1}(\wt{x})))\big)
~~\forall\,\wt{x}\!\in\!\wt{X}.
\end{split}\EE
The first condition in~\eref{iopsiprop_e1} is equivalent to 
the image of the~map
\BE{wtXX2_e}\wt{V}\lra \wt{X}^2, \qquad \wt{v}\lra\big(\io(\wt{v}),\io(\psi(\wt{v}))\big),\EE
being disjoint from the diagonal $\De_{\wt{X}}\!\subset\!\wt{X}^2$.
The second condition in~\eref{iopsiprop_e1} is equivalent to the injectivity of this~map.
The three conditions are illustrated in Figure~\ref{wtVpsi_fig}.\\

\noindent
For a compatible pair $(\io,\psi)$ as in~\eref{iopsidfn_e}, define
\BE{iopsiprop_e2}
X_{\io,\psi}=\wt{X}\big/\!\!\sim, \quad
\wt{x}\sim\wt{x}' ~~~\hbox{if}~~~\wt{x}=\wt{x}'~~\hbox{or}~~ 
\psi\big(\io^{-1}(\wt{x})\big)\cap\io^{-1}(\wt{x}')\neq\eset.\EE
Since $\psi$ is an involution, the relation~$\sim$ above is symmetric.
In light of the first two conditions in~\eref{iopsiprop_e1},
the last condition in~\eref{iopsiprop_e1} is equivalent to the transitivity of this relation.
Let
$$q\!:\wt{X} \lra X_{\io,\psi}$$
be the quotient projection.
By~\eref{iopsiprop_e2}, the projection of~$\wt{X}_q^{(2)}$ to $\wt{X}^2$ is 
the image of~\eref{wtXX2_e}.
For every $k\!\in\!\Z^{\ge0}$, the~map
\BE{tiXprtX_e}
\phi_k\!:\wt{V}_{\io}^{(k)}\lra \wt{X}_q^{(k+1)}\!\subset\!X_{\io,\psi}\!\times\!\wt{X}^{k+1},
~~
\phi_k\big(\wt{x},\wt{v}_1,\ldots,\wt{v}_k\big)= 
\big(q(\wt{x}),\wt{x},\io(\psi(\wt{v}_1)\!),\ldots,\io(\psi(\wt{v}_k)\!)\!\big),\EE
is thus a bijection.
We note that the diagram
\BE{phikiio_e}\begin{split}
\xymatrix{\wt{V}_{\io}^{(k)}  \ar[rr]^{\phi_k}  \ar[rd]^{\io_k} \ar[dd]_{\io_{k;k-1}^{(i)}}
&& \wt{X}_q^{(k+1)} \ar[ld]|{q_{k+1;1}} \ar[dd]^{q_{k+1;k}^{(i+1)}}\\
& \wt{X} & \\ 
\wt{V}_{\io}^{(k-1)} \ar[rr]_{\phi_{k-1}} \ar[ru]|{\io_{k-1}} && 
\wt{X}_q^{(k)} \ar[ul]^{q_{k;1}\!\!}}
\end{split}\EE
commutes for all $i\!\in\![k]$ and $k\!\in\!\Z^+$.

\begin{lmm}\label{iopsi_lmm}
Suppose $(\io,\psi)$ is a compatible pair as in~\eref{iopsidfn_e}
so that $\io$ is a closed transverse immersion and $\psi$ is smooth.
\begin{enumerate}[label=(\arabic*),leftmargin=*]
 
\item For every $k\!\in\!\Z^{\ge0}$, the bijection $\phi_k$ is a homeomorphism.

\item\label{wtXqproj_it} The projection $\wt{X}_q^{(k+1)}\!\lra\!\wt{X}^{k+1}$ is an embedding
with respect to the smooth structure on $\wt{X}_q^{(k+1)}$ induced by~$\phi_k$.

\end{enumerate}
\end{lmm}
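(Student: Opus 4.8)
The plan is to construct an explicit closed embedding of $\wt{V}_\io^{(k)}$ into $\wt{X}^{k+1}$ through which both $\phi_k$ and the projection $\tn{pr}\!:\wt{X}_q^{(k+1)}\!\lra\!\wt{X}^{k+1}$ factor, and to read off the two assertions from its properties. The central auxiliary object is the map
$$\Theta\!:\wt{V}\lra\wt{X}^2, \qquad \Theta(\wt{v})=\big(\io(\wt{v}),\io(\psi(\wt{v}))\big),$$
which I claim is a closed embedding. It is injective by the second condition in~\eref{iopsiprop_e1} (two points with the same image under $\io$ and under $\io\!\circ\!\psi$ must coincide), and it is an immersion since $\nd\io$ is injective and is the first component of $\nd\Theta$. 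To see that $\Theta$ is a closed map I would invoke the local normal form for closed immersions, \cite[Proposition~1.35]{Warner}, exactly as in the proof of Lemma~\ref{NCD_lmm}: every point of $\wt{X}$ has a neighborhood whose $\io$-preimage is a finite disjoint union of open slices, on each of which $\io$ is an embedding. If $A\!\subset\!\wt{V}$ is closed and $\Theta(\wt{v}_n)\!\to\!(\wt{x},\wt{x}')$ with $\wt{v}_n\!\in\!A$, then the $\wt{v}_n$ eventually lie in finitely many such slices, so a subsequence converges in~$\wt{V}$ to some $\wt{v}\!\in\!A$, whence $(\wt{x},\wt{x}')\!=\!\Theta(\wt{v})\!\in\!\Theta(A)$ by continuity. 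An injective immersion that is a closed map is a closed embedding, so $\Theta(\wt{V})$ is a closed submanifold of $\wt{X}^2$ and $\id_{\wt{X}}\!\times\!\Theta^k$ is a closed embedding of $\wt{X}\!\times\!\wt{V}^k$ into $\wt{X}\!\times\!\wt{X}^{2k}$.

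Next I would analyze $\Phi_k\!\equiv\!\tn{pr}\!\circ\!\phi_k$, which by the formula~\eref{tiXprtX_e} for $\phi_k$ sends $(\wt{x},\wt{v}_1,\ldots,\wt{v}_k)$ to $(\wt{x},\io(\psi(\wt{v}_1)),\ldots,\io(\psi(\wt{v}_k)))\!\in\!\wt{X}^{k+1}$. On $\wt{V}_\io^{(k)}$ one has $\io(\wt{v}_i)\!=\!\wt{x}$ for every $i$, so the restriction of $\id_{\wt{X}}\!\times\!\Theta^k$ to $\wt{V}_\io^{(k)}$ takes values in the closed submanifold
$$D=\big\{(c,a_1,b_1,\ldots,a_k,b_k)\in\wt{X}^{2k+1}:\,a_1\!=\!\cdots\!=\!a_k\!=\!c\big\}
\subset\wt{X}\!\times\!\wt{X}^{2k},$$
and under the diffeomorphism $D\!\approx\!\wt{X}^{k+1}$, $(c,b_1,\ldots,b_k)\!\mapsto\!(c,c,b_1,\ldots,c,b_k)$, it becomes exactly $\Phi_k$. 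Since $\wt{V}_\io^{(k)}$ is a closed submanifold of $\wt{X}\!\times\!\wt{V}^k$ — closed there because $\io$ is a closed immersion, and a submanifold because $\io$ is transverse — its image under the closed embedding $\id_{\wt{X}}\!\times\!\Theta^k$ is a closed submanifold of $\wt{X}\!\times\!\wt{X}^{2k}$ contained in $D$; composing with $D\!\approx\!\wt{X}^{k+1}$ shows that $\Phi_k$ is a smooth closed embedding.

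It then remains to assemble the conclusions. The projection $\tn{pr}\!:\wt{X}_q^{(k+1)}\!\lra\!\wt{X}^{k+1}$ is injective with continuous inverse onto its image, namely $(\wt{x}_1,\ldots,\wt{x}_{k+1})\!\mapsto\!(q(\wt{x}_1),\wt{x}_1,\ldots,\wt{x}_{k+1})$, so it is a topological embedding; and since $\phi_k$ maps $\wt{V}_\io^{(k)}$ onto $\wt{X}_q^{(k+1)}$, the image of $\tn{pr}$ equals $\Phi_k(\wt{V}_\io^{(k)})$. Hence $\phi_k=\tn{pr}^{-1}\!\circ\!\Phi_k$ is the composite of the homeomorphism $\Phi_k\!:\wt{V}_\io^{(k)}\!\to\!\Phi_k(\wt{V}_\io^{(k)})$ with the homeomorphism $\tn{pr}^{-1}$, which proves part~(1). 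For part~(2), the smooth structure on $\wt{X}_q^{(k+1)}$ is by definition the one making $\phi_k$ a diffeomorphism, so $\tn{pr}\!=\!\Phi_k\!\circ\!\phi_k^{-1}$ is a composite of a diffeomorphism with the smooth embedding $\Phi_k$, hence a smooth embedding.

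The step I expect to be the main obstacle is verifying that $\Theta$ is a closed map: this is the only point that is not formal, and it is the only place where the hypothesis that $\io$ is a \emph{closed} transverse immersion — rather than merely a transverse immersion — genuinely enters, through the local structure of closed immersions. Everything after that is the routine observation that restricting a closed embedding to a closed submanifold of its source and then identifying a closed submanifold of its target with $\wt{X}^{k+1}$ again produces a closed embedding.
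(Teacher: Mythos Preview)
Your proof is correct and follows essentially the same approach as the paper: both arguments show that the composite $\Phi_k=\tn{pr}\circ\phi_k$ (the map~\eref{iopsi_e3} in the paper) is a smooth closed embedding of $\wt{V}_\io^{(k)}$ into $\wt{X}^{k+1}$, observe that $\tn{pr}$ is a topological embedding, and read off both claims. The paper simply asserts that~\eref{iopsi_e3} is a closed injective immersion and then invokes \cite[Theorem~1.32]{Warner}, whereas you supply an explicit justification by factoring through the auxiliary map~$\Theta$ of~\eref{wtXX2_e}; this is the same argument with one step made more explicit.
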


\begin{proof}
Since $\io$ is a closed transverse immersion and $\psi$ is smooth, the map
\BE{iopsi_e3}\wt{V}_{\io}^{(k)}\lra \wt{X}^{k+1}, \quad
\big(\wt{x},\wt{v}_1,\ldots,\wt{v}_k\big)\lra
\big(\wt{x},\io(\psi(\wt{v}_1)\!),\ldots,\io(\psi(\wt{v}_k)\!)\!\big),\EE
is a closed immersion.
Since this map is injective, it follows that it is a topological embedding.
Along with \cite[Theorem~1.32]{Warner}, this implies that~\eref{iopsi_e3} is a smooth embedding.
Since $q$ is continuous, the projection in~\ref{wtXqproj_it} is a topological embedding.
The last two conclusions imply both claims of the lemma.
\end{proof}

\noindent
By Lemma~\ref{iopsi_lmm}, the smooth structure on $\wt{X}_q^{(k+1)}$ induced by $\phi_k$ 
is $\bS_{k+1}$-invariant.
Furthermore, the~maps
\begin{alignat*}{2}
q_{k+1}^{(i)}\!:\wt{X}_q^{(k+1)}&\lra\wt{X}, &\quad 
q_{k+1}^{(i)}(x,\wt{x}_1,\ldots,\wt{x}_{k+1})&=\wt{x}_i,\\
q_{k+1;k}^{(i)}\!:\wt{X}_q^{(k+1)}&\lra\wt{X}_q^{(k)},&\quad
q_{k+1;k}^{(i)}\big(x,\wt{x}_1,\ldots,\wt{x}_{k+1}\big)&= 
\big(x,\wt{x}_1,\ldots,\wt{x}_{i-1},\wt{x}_{i+1},\ldots,\wt{x}_{k+1}\big),
\end{alignat*}
are immersions for all $k\!\in\!\Z^{\ge0}$ and $i\!\in\![k\!+\!1]$.\\

\begin{figure}
\begin{pspicture}(-4,-4)(11,4.5)
\psset{unit=.3cm}
\psline[linewidth=.08](-1,14)(-1,8)\pscircle*(-1,11){.2}\rput(-2.1,11.7){\sm{$\wt{v}_{12}$}}
\psline[linewidth=.08,linestyle=dashed](-3,9)(1,13)\rput(2.5,12.5){\sm{$\cN\io_{2;1}^{(1)}$}}
\psline[linewidth=.05]{<->}(3,11)(10,11)\rput(6.5,11.7){$\psi$}
\psline[linewidth=.08](14,14)(14,8)\pscircle*(14,11){.2}\rput(13,11.9){\sm{$\wt{v}_{21}$}}
\psline[linewidth=.08,linestyle=dashed](11,11)(17,11)\rput(17,11.8){\sm{$\cN\io_{2;1}^{(1)}$}}
\psline[linewidth=.08](27,9)(31,13)\pscircle*(29,11){.2}\rput(28.3,11.9){\sm{$\wt{v}_{31}$}}
\psline[linewidth=.08,linestyle=dashed](26,11)(32,11)\rput(33,11.8){\sm{$\cN\io_{2;1}^{(1)}$}}
\psline[linewidth=.09,linestyle=dashed](-1,6)(-1,0)\pscircle*(-1,3){.2}\rput(0.3,2.6){\sm{$\wt{v}_{13}$}}
\psline[linewidth=.08](-3,1)(1,5)\rput(-2.8,5){\sm{$\cN\io_{2;1}^{(2)}$}}
\psline[linewidth=.05]{<->}(3,5.3)(25,8.3)\rput(15,7.7){$\psi$}
\psline[linewidth=.05,linestyle=dashed]{<->}(0,.5)(13,.5)\rput(6.5,1.3){$D_2\psi_2$}
\psline[linewidth=.05]{->}(-1,-1.5)(-1,-5.5)\rput(-.3,-3.5){$\io$}
\psline[linewidth=.08,linestyle=dashed](14,6)(14,0)\pscircle*(14,3){.2}\rput(15.1,2.2){\sm{$\wt{v}_{23}$}}
\psline[linewidth=.08](11,3)(17,3)\rput(12.2,5){\sm{$\cN\io_{2;1}^{(2)}$}}
\psline[linewidth=.05]{->}(14,-1.5)(14,-5.5)\rput(14.7,-3.5){$\io$}
\psline[linewidth=.05]{<->}(18,3)(25,3)\rput(21.5,3.7){$\psi$}
\psline[linewidth=.07,linestyle=dashed](27,1)(31,5)\pscircle*(29,3){.2}\rput(30,2.2){\sm{$\wt{v}_{32}$}}
\psline[linewidth=.07](26,3)(32,3)\rput(29.2,5.5){\sm{$\cN\io_{2;1}^{(2)}$}}
\psline[linewidth=.05]{->}(29,-1.5)(29,-5.5)\rput(29.7,-3.5){$\io$}
\psline[linewidth=.09](-1,-7)(-1,-13)\pscircle*(-1,-10){.2}\rput(-1.8,-9.4){\sm{$\wt{x}_1$}}
\psline[linewidth=.08](-3,-12)(1,-8)
\psline[linewidth=.08](14,-7)(14,-13)\pscircle*(14,-10){.2}\rput(13.2,-9.2){\sm{$\wt{x}_2$}}
\psline[linewidth=.08](11,-10)(17,-10)
\psline[linewidth=.07](27,-12)(31,-8)\pscircle*(29,-10){.2}\rput(28.3,-9.2){\sm{$\wt{x}_3$}}
\psline[linewidth=.07](26,-10)(32,-10)
\end{pspicture}
\caption{The solid lines in the bottom row represent the image of a transverse immersion
\hbox{$\io\!:\wt{V}\!\lra\!\wt{X}$} around three double points $\wt{x}_i\!\in\!\wt{X}$.
The solid lines above each~$\wt{x}_i$ are the preimages of the two branches at~$\wt{x}_i$
in~$\wt{V}$; they are interchanged by the involution~$\psi$ as indicated.
The top and middle points in each column represent an element of $\wt{V}_{\io}^{(2)}$,
with the top point being its first $\wt{V}$-component.}
\label{wtVpsi_fig}
\end{figure}
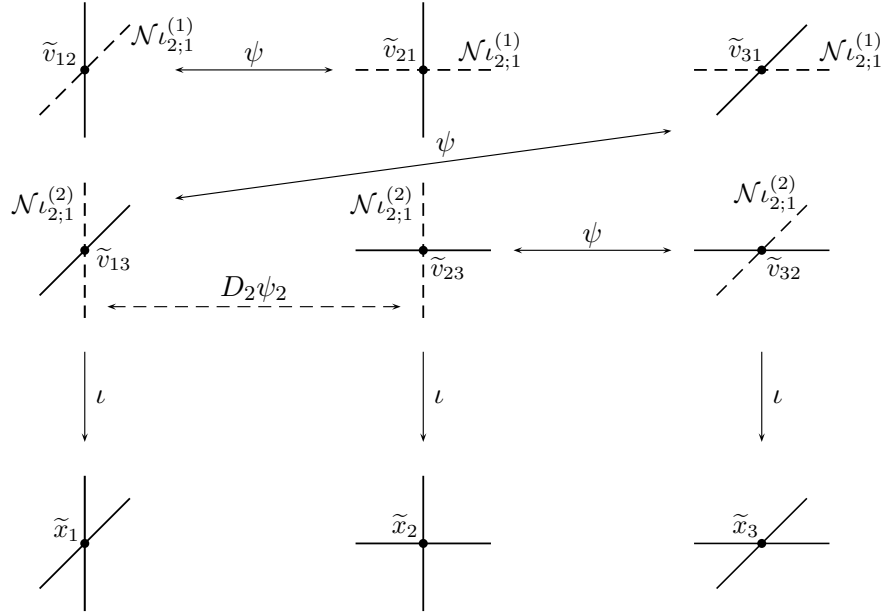

\noindent
By the commutativity of the diagram in~\eref{phikiio_e}, 
the identity on $q_{k+1;1}^*T\wt{X}$ and~$\nd\phi_{k-1}$ induce bundle
isomorphisms
$$D_k\!:\cN\io_k\lra \cN q_{k+1;1} \qquad\hbox{and}\qquad
D_i\phi_k\!:\cN\io_{k;k-1}^{(i)}\lra \cN q_{k+1;k}^{(i+1)}$$
covering~$\phi_k$.
By the commutativity of the right triangle in~\eref{phikiio_e} with $i$ replaced by 
$i\!-\!1$, $\nd q_{k;1}$ induces a bundle homomorphism
\BE{cNqdecompmaps_e} \cN q_{k+1;k}^{(i)} \lra \cN q_{k+1;1}\EE
over $\wt{X}_q^{(k+1)}$ for every $i\!\in\![k\!+\!1]\!-\!\{1\}$.
For all $\si\!\in\!\bS_k$ and $i\!\in\![k]$, the homomorphism~$\nd\si_i$ of 
the second diagram in~\eref{Vkdiag_e} with~$\io$ replaced by~$q$ induces an isomorphism
$$D_i{\si}\!:\cN q_{k;k-1}^{(i)}\lra \cN q_{k;k-1}^{(\si(i))}$$
covering the action of~$\si$ on~$\wt{X}_q^{(k)}$.\\

\noindent
For $k\!\ge\!2$, let 
\begin{gather*}
\cN q_k=\bigoplus_{i\in[k]}\cN q_{k;k-1}^{(i)}\,, \quad
\cN_i q_k=\!\!\bigoplus_{\begin{subarray}{c}j\in[k]\\ j\neq i\end{subarray}}\!
\cN q_{k;k-1}^{(j)}\subset\cN q_k\,
~~\forall\,i\!\in\![k].
\end{gather*}
By the commutativity of~\eref{phikiio_e}, the diagram
$$\xymatrix{ \bigoplus\limits_{i\in[k]}\!\!\cN\io_{k;k-1}^{(i)}  \ar[d]
\ar[rrr]^{\bigoplus\limits_{i\in[k]}\!\!\!D_i\phi_k} &&& \cN_1q_{k+1}\ar[d] \\
  \cN\io_k\ar[rrr]^{D_k} &&& \cN q_{k+1;1} }$$
commutes for every $k\!\in\!\Z^+$; the left and right vertical arrows above
are the isomorphisms induced by the homomorphisms~\eref{iodecompmaps_e} 
and~\eref{cNqdecompmaps_e}, respectively.
For $k\!\ge\!2$ and $\si\!\in\!\bS_k$, let
\BE{DsiXdfn_e}D\si\!\equiv\!\bigoplus_{i\in[k]}\!D_i\si: \cN q_k\lra \cN q_k\,.\EE
This bundle isomorphism preserves the splitting of~$\cN q_k$ and
takes~$\cN_i q_k$ to~$\cN_{\si(i)}q_k$.

\begin{dfn}\label{NCTransConfregul_dfn}
Suppose $(\io,\psi)$ is a compatible pair as in~\eref{iopsidfn_e}
so that $\io$ is a transverse closed immersion and $\psi$ is smooth.
Let $q\!:\wt{X}\!\lra\!X_{\io,\psi}$ be the associated quotient.
A \sf{regularization} for  $(\io,\psi)$ is a tuple $(\Psi_{k;i})_{k\in\Z^+,i\in[k]}$,
where each $\Psi_{k;i}$ is a regularization for the immersion~$q_k^{(i)}$,
such~that the tuple $(\Psi_{k+1;1}\!\circ\!D_k)_{k\in\Z^{\ge0}}$ 
is a refined regularization for~$\io$ as in Definition~\ref{NCTransCollregul_dfn}, 
\begin{alignat}{2}
\label{NCSCCregCond_e0}
q\!\circ\!\Psi_{k;i_1}\big|_{\cN_{i_2}q_k\cap\Dom(\Psi_{k;i_1})}
&=q\!\circ\!\Psi_{k;i_2}\big|_{\cN_{i_1}q_k\cap\Dom(\Psi_{k;i_2})}
&\qquad &\forall~i_1,i_2\!\in\![k],~k\!\in\!\Z^+, \\
\label{NCSCCregCond_e2}
\Psi_{k;i}&=\Psi_{k;\si(i)}\!\circ\!D\si\big|_{\Dom(\Psi_{k;i})}  &\qquad
&\forall~i\!\in\![k],~\si\!\in\!\bS_k,~k\!\ge\!2\,.
\end{alignat}
\end{dfn}

\begin{dfn}\label{NCSCCregul_dfn}
Suppose $(\wt{X},\wt\om)$ is a symplectic manifold, 
$(\io,\psi)$ is a compatible pair as in~\eref{iopsidfn_e}
so that $\io$ is a closed transverse immersion of codimension~2, $\psi$ is smooth,
$\io_k^*\wt\om$ is a symplectic form on~$\wt{V}_{\io}^{(k)}$ for all $k\!\in\!\Z^+$,
and $\psi^*\io^*\wt\om\!=\!\io^*\wt\om$.
An \sf{$\wt\om$-regularization for~$(\io,\psi)$} is a~tuple 
\BE{NCSCCregul_e}\fR\equiv \big(\cR_k)_{k\in\Z^+}\equiv
\big(\rho_{k;i},\na^{(k;i)},\Psi_{k;i}\big)_{k\in\Z^+,i\in[k]}\EE
such that $(\Psi_{k;i})_{k\in\Z^+,i\in[k]}$ is a regularization for~$(\io,\psi)$,
the~tuple 
\BE{NCSCCregul_e0}\big(\big( \{D_i\phi_k\}^*(\rho_{k+1;i+1},\na^{(k+1;i+1)})\big)_{i\in[k]}\big),
\Psi_{k+1;1}\!\circ\!D_k\big)_{k\in\Z^{\ge0}}\EE
is a refined $\wt\om$-regularization for the immersion~$\io$ as in Definition~\ref{NCSCDregul_dfn},
and 
\BE{NCSCCregul_e3}\big(\rho_{k;i},\na^{(k;i)}\big) = 
\{D_i\si\}^*\big(\rho_{k;\si(i)},\na^{(k;\si(i))}\big) \quad
\forall\,i\!\in\![k],\,\si\!\in\!\bS_k,\, k\!\in\!\Z^+.\EE
\end{dfn}

\vspace{.2in}

\noindent
The condition~\eref{NCSCCregCond_e0} replaces~\eref{SCCregCond_e0}.
By Proposition~\ref{NCD_prp}, the first condition on~$\wt\om$ in Definition~\ref{NCSCCregul_dfn}
is equivalent to $V\!\equiv\!\io(\wt{V})$ being an NC symplectic divisor in~$(\wt{X},\wt\om)$.
For a smooth family $(\wt\om_t)_{t\in B}$ of symplectic forms on~$\wt{X}$ satisfying
the conditions of Definition~\ref{NCSCCregul_dfn},
the notion of $\wt\om$-regularization naturally extends to a notion of 
\sf{$(\wt\om_t)_{t\in B}$-family of refined regularizations for~$(\io,\psi)$}.
We show in Section~\ref{NCCcomp_subs} that the NC symplectic varieties
of Definition~\ref{NCC_dfn} correspond to the compatible pairs~$(\io,\psi)$ and
symplectic forms~$\wt\om$ satisfying the conditions of Definition~\ref{NCSCCregul_dfn}.
An $\om$-regularization in the sense of Definition~\ref{NCCregul_dfn} likewise
corresponds  to an $\wt\om$-regularization for~$(\io,\psi)$.

\subsection{Local vs.~global perspective}
\label{NCCcomp_subs}

\noindent
We now show that the local and global notions of NC symplectic variety 
are equivalent.
If $(\wt{X},\wt\om)$ and $(\io,\psi)$ correspond to an NC variety~$(X,\om)$ as 
in Corollary~\ref{NCC_crl} below, 
then an \hbox{$\wt\om$-regularization} for~$(\io,\psi)$ in the sense of Definition~\ref{NCSCCregul_dfn}
determines an $\om$-regularization for~$X$ in the sense of 
Definition~\ref{NCCregul_dfn}\ref{NCCregul_it1}.
Conversely, the local regularizations constituting an $\om$-regularization for~$X$
can be patched together to form an $\wt\om$-regularization for~$(\io,\psi)$.
These relations also apply to families of regularizations.

\begin{lmm}\label{NCC_lmm} 
Suppose $\wt{X}$ and $\wt{V}$ are smooth manifolds and
$(\io,\psi)$ is a compatible pair as in~\eref{iopsidfn_e}
so that~$\io$ is a closed transverse immersion of codimension~2 and $\psi$ is 
a smooth involution.
The associated quotient space~$X_{\io,\psi}$ is then canonically
an NC variety.
Furthermore, every NC variety is isomorphic to $X_{\io,\psi}$ for some~$(\io,\psi)$
as above.
\end{lmm}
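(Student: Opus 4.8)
The plan is to prove the two assertions separately, in each case leveraging the local structure of a closed transverse immersion recorded in \cite[Proposition~1.35]{Warner} (exactly as in the proof of Lemma~\ref{NCD_lmm}), the bookkeeping role of the involution~$\psi$, and the precise content of the three compatibility conditions in~\eref{iopsiprop_e1}.

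\emph{Quotients are NC varieties.} Let $q\!:\wt{X}\!\lra\!X_{\io,\psi}$ be the quotient projection. First I would settle the point-set topology. Since $\io$ is a closed immersion and $\psi$ a homeomorphism, the map $\wt{v}\!\lra\!(\io(\psi(\wt{v})),\io(\wt{v}))$ is a closed embedding; hence the graph of~$\sim$ is closed in~$\wt{X}^2$ and, for $C\!\subset\!\wt{X}$ closed, $q^{-1}(q(C))\!=\!C\!\cup\!\io(\psi(\io^{-1}(C)))$ is closed, so $q$ is a closed map and $X_{\io,\psi}$ is Hausdorff; it is second countable and paracompact because $\wt{X}$ is. By the last condition in~\eref{iopsiprop_e1}, the relation~$\sim$ is transitive, so the $\sim$-class of $\wt{x}\!\in\!\wt{X}$ is $\{\wt{x}\}\!\cup\!\io(\psi(\io^{-1}(\wt{x})))$, which is finite; write $q^{-1}(x)\!=\!\{\wt{x}_1,\ldots,\wt{x}_N\}$. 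Using $\psi$ to match the local branches of $V\!\equiv\!\io(\wt{V})$ at the various~$\wt{x}_i$, one chooses, by \cite[Proposition~1.35]{Warner} and continuity of~$\psi$, neighborhoods $\wt{U}_i\!\ni\!\wt{x}_i$ small enough that $\io^{-1}(\wt{U}_i)$ is a disjoint union of pieces on which $\io$ restricts to an embedding, \emph{and} that $\psi$ carries each such piece into the matching piece lying over the appropriate~$\wt{U}_{i'}$; the first two conditions in~\eref{iopsiprop_e1} ensure that $i'\!\neq\!i$ and that this matching is a well-defined bijection. Then $\bigsqcup_i\wt{U}_i$ is $\sim$-saturated, the images under~$\io$ of these pieces form finite transverse collections of closed codimension~$2$ submanifolds of the~$\wt{U}_i$, the $\psi$-matchings identify them into a finite transverse configuration~$\X_x$ as in Definition~\ref{TransConf_dfn1}, and $q$ descends to a homeomorphism from $U_x\!\equiv\!q(\bigsqcup_i\wt{U}_i)$ onto the quotient space associated to~$\X_x$ as in~\eref{Xesetdfn_e}. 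The maximal collection of NC charts containing all the $(U_x,\X_x,\vph_x)$ is then an NC atlas: any two such charts are restrictions of~$q$, so the overlap maps of Definition~\ref{ImmTransConf_dfn0}\ref{NCatlas_it} are diffeomorphisms up to relabeling of components. Hence $X_{\io,\psi}$ is an NC variety.

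\emph{NC varieties are quotients.} Given an NC variety~$X$ with atlas $(U_y,\X_y,\vph_y)_{y\in\cA}$, $\X_y\!=\!(X_{y;I})_{I\in\cP^*(N_y)}$, I would build the \emph{normalization} $\wt{X}$ by gluing the smooth components~$X_{y;i}$ along the atlas overlaps — verbatim the construction of~$\wt{V}$ in part~(1) of the proof of Lemma~\ref{NCD_lmm} — and likewise build~$\wt{V}$ by gluing the divisors~$X_{y;ij}$ with $i\!\neq\!j$; as the overlap maps are diffeomorphisms, $\wt{X}$ and~$\wt{V}$ are smooth manifolds (Hausdorff since $X$ is). Define $\io\!:\wt{V}\!\lra\!\wt{X}$ locally by $X_{y;ij}\!\hookrightarrow\!X_{y;i}$ and $\psi\!:\wt{V}\!\lra\!\wt{V}$ locally by interchanging the $(i,j)$- and $(j,i)$-branches via the identification $X_{y;ij}\!\approx\!X_{y;ji}$ built into Definition~\ref{TransConf_dfn1}; these patch to global maps because the overlap maps are isomorphisms of transverse configurations. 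Then $\io$ is a transverse immersion of codimension~$2$ — transversality~\eref{TransImmVerHom_e} along~$\wt{V}_{\io}^{(k)}$ reduces at $x\!\in\!X_{y;i}$ to transversality of the collection $\{X_{y;ij}\}_{j}$, using $X_{y;ij_1\ldots j_k}\!=\!\bigcap_m X_{y;ij_m}$ — it is closed since it is locally proper and the atlas may be taken locally finite, and $\psi$ is a smooth involution. The first two conditions in~\eref{iopsiprop_e1} hold because distinct components $X_{y;i},X_{y;j}$ of a single chart remain disjoint in~$\wt{X}$, and the third holds because the relation~$\sim$ on~$\wt{X}$ agrees locally with the gluing defining $X_{y;\eset}$, which is transitive.

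Finally, the map $p\!:\wt{X}\!\lra\!X$ sending $(y,i,\cdot)$ to $\vph_y^{-1}(\cdot)$ is a surjective quotient map whose fibers are exactly the $\sim$-classes, so $p$ and $q$ induce a canonical homeomorphism $X_{\io,\psi}\!\cong\!X$, which one checks respects the NC atlases by comparing the atlas constructed above with $(U_y,\X_y,\vph_y)_{y\in\cA}$. I expect the main obstacle to be the local analysis in the first half: one must shrink the finitely many neighborhoods $\wt{U}_i$ \emph{simultaneously} so that $\psi$ sends branches lying in $\bigsqcup_i\wt{U}_i$ back into $\bigsqcup_i\wt{U}_i$ (making this union $\sim$-saturated) and so that the $\psi$-induced identifications glue the local transverse collections into a genuine transverse configuration, and one must carefully translate the set-theoretic conditions~\eref{iopsiprop_e1} into this geometric picture; by contrast, the point-set topology and the gluing constructions in the second half are routine.
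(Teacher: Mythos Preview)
Your proposal is correct and follows essentially the same approach as the paper: in part~(1) you build the local NC chart around $q(\wt{x})$ by using \cite[Proposition~1.35]{Warner} and the continuity of~$\psi$ to split the preimages into branches that~$\psi$ permutes, exactly as the paper does with its explicit labeling $\io^{-1}(\wt{x}_i)\!=\!\{\wt{v}_{ij}\}_j$, $\psi(\wt{v}_{ij})\!=\!\wt{v}_{ji}$; in part~(2) you glue the smooth pieces~$X_{y;i}$ and the divisors~$X_{y;ij}$ over a locally finite subatlas and define $\io,\psi$ locally by inclusion and index-swap, which is verbatim the paper's construction. The one substantive addition on your side is the explicit verification that $X_{\io,\psi}$ is Hausdorff, second countable, and paracompact (via closedness of the graph of~$\sim$ and of~$q$), which Definition~\ref{ImmTransConf_dfn} requires but the paper's proof leaves implicit; this is a genuine, if routine, improvement.
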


\begin{proof}
(1) Let $k\!\in\!\Z^+$, $V_{\io}^{(k)}\!\subset\!\wt{X}$ be as in~\eref{Xiotak_e}, and
$\wt{x}\!\in\!V_{\io}^{(k)}\!-\!V_{\io}^{(k+1)}$.
In light of~\eref{iopsiprop_e1}, we can write
\begin{equation*}\begin{split}
&\{\wt{x}\}\!\cup\!\io\big(\psi(\io^{-1}(\wt{x})\!)\big)\equiv\big\{\wt{x}_i\!:i\!\in\![k\!+\!1]\big\} 
\quad\hbox{and}\quad
\io^{-1}(\wt{x}_i)\equiv\big\{\wt{v}_{ij}\!:\,j\!\in\![k\!+\!1]\!-\!i\big\}~~\forall\,i\!\in\![k\!+\!1]\\
&\hspace{.8in}\hbox{with}\qquad \wt{x}_i\neq\wt{x}_{i'}~~\forall\,i\!\neq\!i', \quad
\wt{v}_{ij}\neq\wt{v}_{ij'}~~\forall\,j\!\neq\!j',\quad
\psi\big(\wt{v}_{ij}\big)=\wt{v}_{ji}\,;
\end{split}\end{equation*}
see Figure~\ref{wtVpsi_fig}.
By \cite[Proposition~1.35]{Warner}, the closedness of~$\io$, and
the continuity of~$\psi$,
there exist open neighborhoods $U_i\!\subset\!\wt{X}$ of~$\wt{x}_i$ with $i\!\in\![k\!+\!1]$ 
and neighborhoods $\wt{V}_{ij}\!\subset\!\wt{V}$ of~$\wt{v}_{ij}$ with 
$i,j\!\in\![k\!+\!1]$, $i\!\neq\!j$, such~that
$$\io^{-1}(U_i)=\bigsqcup_{j\in[k+1]-i}\!\!\!\!\!\!\!\!\wt{V}_{ij}\subset\wt{V},\quad
U_i\!\cap\!U_j=\eset, ~~~
\psi\big(\wt{V}_{ij}\big)=\wt{V}_{ji} \quad\forall~i,j\!\in\![k\!+\!1],~i\!\neq\!j,$$
and the restriction of $\io$ to each $\wt{V}_{ij}$ is injective.
Let $U\!=\!q(U_1)\!\cup\!\ldots\!q(U_{k+1})$.
Since
$$q^{-1}\big(q(U)\big)=\bigsqcup_{j\in[k+1]}\!\!\!\!\!U_j$$
by the above, $U$ is an open neighborhood of $q(\wt{x})$ in~$X_{\io,\psi}$.\\

\noindent
For $i\!\in\!I\!\subset\![k\!+\!1]$ and $j\!\in\![k\!+\!1]\!-\!i$, let 
$$U_{ij}=\io(\wt{V}_{ij})\subset U_i, \qquad
U_I=\bigcap_{j'\in I-i}\!\!\!\!U_{ij'}\subset U_i\,.$$
For each $i\!\in\![k\!+\!1]$, $\{U_{ij}\}_{j\neq i}$ 
is a transverse collection of closed submanifolds of~$U_i$
of codimension~2 such~that 
$$\io(\wt{V})\cap U_i= \bigcup_{j\in[k+1]-i}\!\!\!\!\!\!\!\!U_{ij}\,.$$
The diffeomorphisms
$$\io\!: \wt{V}_{ij}\lra U_{ij}, \qquad \psi\!: \wt{V}_{ij}\lra\wt{V}_{ji},
\qquad\hbox{and}\qquad \io\!: \wt{V}_{ji}\lra U_{ji}$$
with $i\!\neq\!j$ induce an identification $\psi_{ij}\!:U_{ij}\!\lra\!U_{ji}$
restricting to identifications on the submanifolds $U_I\!\subset\!U_{ij},U_{ji}$
whenever $i,j\!\in\!I\!\subset\![k\!+\!1]$.
By~\eref{iopsiprop_e2},
$$U=\bigg(\bigsqcup_{j\in[k+1]}\!\!\!\!\!U_j\bigg)\!\!\bigg/\!\!\!\sim
\qquad\hbox{with}\qquad 
U_{ij}\ni \wt{x}' \sim  \psi_{ij}(\wt{x}')\in U_{ji}
~~\forall\,\wt{x}'\!\in\!U_{ij},\,i\!\neq\!j\,.$$
Thus, $\X\!\equiv\!\{U_I\}_{I\in\cP^*(k+1)}$ is a transverse configuration 
such that  $U_{ij}$ is a closed submanifold of~$U_i$ of codimension~2
for all $i,j\!\in\![k\!+\!1]$ distinct and the restriction of~$q$ to the union of~$U_i$
descends to a homeomorphism
$$\vph\!: U\lra X_{\eset}\,.$$
The tuple $(U,\X,\vph)$ is then a chart around $q(\wt{x})$ in~$X_{\io,\psi}$
in the sense of Definition~\ref{ImmTransConf_dfn0}\ref{NCchart_it}.
Any two such charts overlap smoothly and thus generate an NC atlas for~$X_{\io,\psi}$.\\

\noindent
(2) Let $X$ be an NC variety.
Choose a locally finite collection 
$$\big(U_y,\X_y\!\equiv\!(X_{y;I})_{I\in\cP^*(N_y)},
\vph_y\!:U_y\!\lra\!X_{y;\eset}\big)_{y\in\cA'}$$
of NC charts covering~$X$. 
Let 
$$\wt{X}=\bigg(\bigsqcup_{y\in\cA'}\bigsqcup_{i\in[N_y]}\!\!\!\{(y,i)\}\!\times\!X_{y;i}
\bigg)\!\Big/\!\!\sim,$$
where we identify $(y,i,z)$ with $z\!\in\!X_{y;i}$
and  $(y',i',z')$ with $z'\!\in\!X_{y';i'}$~if there exist 
$x\!\in\!U_y\!\cap\!U_{y'}$ and an overlap map~$\vph_{yy';x}$ as in~\eref{ImmTransConf_e2}
such~that
\BE{NCvarglue_e}z'\in \vph_{y'}\big(U_{yy';x}\big),\quad
z=\vph_{yy';x}(z'),\quad
\vph_{yy';x}\big(\vph_{y'}\big(U_{yy';x}\big)\!\cap\!X_{y';i'}\big)
=\vph_y\big(U_{yy';x}\big)\!\cap\!X_{y;i}.\EE
Define
$$\wt{V}=\bigg(\bigsqcup_{y\in\cA'}\bigsqcup_{i\in[N_y]}\bigsqcup_{j\in[N_y]-i}\!\!\!\!\!\!
\{(y,i,j)\}\!\times\!X_{y;ij}\bigg)\!\Big/\!\!\sim,$$
where we identify $(y,i,j,z)$ with $z\!\in\!X_{y;ij}$
and  $(y',i',j',z')$ with $z'\!\in\!X_{y';i'j'}$~if 
there exist 
$x\!\in\!U_y\!\cap\!U_{y'}$ and an overlap map~$\vph_{yy';x}$ as in~\eref{ImmTransConf_e2}
such that~\eref{NCvarglue_e} holds~and
$$\vph_{yy';x}\big(\vph_{y'}(U_{yy';x})\!\cap\!X_{y';j'}\big)
=\vph_y\big(U_{yy';x}\big)\!\cap\!X_{y;j}\,.$$
The Hausdorffness of $X$ implies the Hausdorffness of $\wt{X}$ and~$\wt{V}$.
The last two spaces inherit smooth structures from the smooth structures of~$X_{y;i}$ and~$X_{y;ij}$. 
Define 
$$\io\!:\wt{V}\lra\wt{X}, \quad \io\big([y,i,j,z]\big)=[y,i,z], \qquad
\psi\!:\wt{V}\lra\wt{V}, \quad \psi\big([y,i,j,z]\big)=[y,j,i,z].$$
By the assumption on the charts in Definition~\ref{ImmTransConf_dfn}, 
$\io$ is a closed transverse immersion;
the smooth map~$\psi$ is an involution.
It is immediate that the pair~$(\io,\psi)$ is compatible.
The well-defined~map
$$X\lra X_{\io,\psi}, \qquad x\lra \big[y,i,\vph_y(x)\big]\quad
\forall~x\in\vph_y^{-1}(X_{y;i}),~i\!\in\![N_y],~y\!\in\!\cA',$$
is an isomorphism of NC varieties.
\end{proof}

\noindent
Following the usual terminology of algebraic geometry, we call the map
$$q\!: \wt{X}\lra X_{\io,\psi}=X$$
provided by the last statement of Lemma~\ref{NCC_lmm} the \sf{normalization}
of the NC variety~$X$; it is unique up to isomorphism.
By the proof of Lemma~\ref{NCC_lmm}, $q$ pulls back 
a symplectic structure~$\om$ on~$X_{\io,\psi}$ to a symplectic form~$\wt\om$
on~$\wt{X}$.

\begin{crl}\label{NCC_crl}
Suppose $(\io,\psi)$ is a compatible pair as in~\eref{iopsidfn_e}
so that~$\io$ is a closed transverse immersion of codimension~2 and $\psi$ is 
a smooth involution.
A symplectic structure~$\om$ on the NC variety~$X_{\io,\psi}$ corresponds 
to a symplectic form~$\wt\om$ on~$\wt{X}$ such that $\io(\wt{V})$ is an NC symplectic divisor
in~$(\wt{X},\wt\om)$ and $\psi^*\io^*\wt\om\!=\!\io^*\wt\om$.
\end{crl}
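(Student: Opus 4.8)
The plan is to carry out the correspondence through the explicit NC atlas on $X\!\equiv\!X_{\io,\psi}$ constructed in the proof of Lemma~\ref{NCC_lmm}. Recall from that proof that around each point $q(\wt x)$, with $\wt x\!\in\!V_\io^{(k)}\!-\!V_\io^{(k+1)}$, one obtains a chart $(U,\X,\vph)$ with $\X\!=\!\{U_I\}_{I\in\cP^*(k+1)}$ in which each $U_i\!\subset\!\wt X$ is open, $\io(\wt V)\!\cap\!U_i\!=\!\bigcup_{j\ne i}U_{ij}$ with $U_{ij}\!=\!\io(\wt V_{ij})$ a closed codimension-$2$ submanifold of $U_i$, the collection $\{U_{ij}\}_{j\ne i}$ is transverse, and the identification $U_{ij}\!\approx\!U_{ji}$ that is part of the data of $\X$ is the map $\psi_{ij}\!=\!\io\!\circ\!\psi\!\circ\!\io^{-1}$. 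These charts cover $X$, so since $q$ is surjective the open sets $U_i\!\subset\!\wt X$ occurring in them cover $\wt X$; and under the tautological identifications of each component $X_{y;i}$ of a local $X_{y;\eset}$ with the corresponding $U_{y;i}\!\subset\!\wt X$, the overlap maps $\vph_{yy';x}$ of Definition~\ref{ImmTransConf_dfn0}\ref{NCatlas_it} are restrictions of the identity map of $\wt X$. I will translate symplectic data on $X$, recorded through these charts, into a symplectic form on $\wt X$, and back.

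For the forward direction, let $\om\!=\!(\om_{y;i})$ be a symplectic structure on $X$. Each $\om_{y;i}$ is then a $2$-form on the open set $U_{y;i}\!\subset\!\wt X$, and the overlap compatibility of Definition~\ref{ImmTransConf_dfn}\ref{NCsympstr_it} says precisely that the $\om_{y;i}$ agree on the overlaps $U_{y;i}\!\cap\!U_{y';i'}$ inside $\wt X$; since nondegeneracy and closedness are local, they patch to a symplectic form $\wt\om$ on $\wt X$ with $\wt\om|_{U_{y;i}}\!=\!\om_{y;i}$. Because $(\om_{y;i})_i\!\in\!\Symp^+(\X_y)$, the collection $\{U_{y;ij}\}_{j\ne i}$ is an SC symplectic divisor in $(U_{y;i},\om_{y;i})$ in the sense of Definition~\ref{SCD_dfn}; as the $U_{y;i}$ cover $\wt X$, Definition~\ref{NCD_dfn} shows $\io(\wt V)$ is an NC symplectic divisor in $(\wt X,\wt\om)$. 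Finally, $(\om_{y;i})_i\!\in\!\Symp(\X_y)$ means that $\om_{y;i}|_{U_{y;ij}}$ and $\om_{y;j}|_{U_{y;ji}}$ agree under the identification $\psi_{ij}$ of Definition~\ref{TransConf_dfn1}; pulling these back along the embeddings $\io|_{\wt V_{ij}}$ and $\io|_{\wt V_{ji}}$ and using $\psi_{ij}\!=\!\io\!\circ\!\psi\!\circ\!\io^{-1}$ converts this into $\psi^*\io^*\wt\om\!=\!\io^*\wt\om$ on each piece $\wt V_{ij}$, hence on all of $\wt V$.

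For the converse, suppose $\wt\om$ is a symplectic form on $\wt X$ with $\io(\wt V)$ an NC symplectic divisor in $(\wt X,\wt\om)$ and $\psi^*\io^*\wt\om\!=\!\io^*\wt\om$. For each chart as above put $\om_{y;i}\!:=\!\wt\om|_{U_{y;i}}$. Running the pullback computation of the previous paragraph in reverse gives $\om_{y;i}|_{U_{y;ij}}\!=\!\om_{y;j}|_{U_{y;ji}}$ under $\psi_{ij}$, so $(\om_{y;i})_i\!\in\!\Symp(\X_y)$. It remains to see that $\{U_{y;ij}\}_{j\ne i}$ is an SC symplectic divisor in $(U_{y;i},\om_{y;i})$; since it is already a transverse collection of closed codimension-$2$ submanifolds, what must be checked (by Definition~\ref{SCD_dfn}) is that each $U_{y;I}\!=\!\bigcap_{j\in I-i}U_{y;ij}$ is a symplectic submanifold of $(U_{y;i},\om_{y;i})$ whose intersection and $\om_{y;i}$-orientations agree. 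This being a pointwise condition, I fix $z\!\in\!U_{y;I}$, apply Definition~\ref{NCD_dfn} to $\io(\wt V)$ at $z$ to get an SC symplectic chart for $\io(\wt V)$ around $z$, and note that, because $\io$ is a transverse immersion, the finitely many distinct smooth branches of $\io(\wt V)$ through $z$ agree as germs at $z$ with the $U_{y;ij}$ passing through $z$; hence near $z$ the submanifold $U_{y;I}$ is an intersection of branches of that SC symplectic chart, so it is symplectic near $z$ with matching orientations — all relevant orientations being read off from $\wt\om$ on $\wt X$. Thus $(\om_{y;i})_i\!\in\!\Symp^+(\X_y)$ for every $y$, the overlap compatibility of Definition~\ref{ImmTransConf_dfn}\ref{NCsympstr_it} is automatic since each $\om_{y;i}$ is a restriction of the single form $\wt\om$, and $(\om_{y;i})$ makes $X$ an NC symplectic variety in the sense of Definition~\ref{NCC_dfn}. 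The two constructions are mutually inverse because each amounts to passing between $\wt\om$ and the collection of its restrictions $(\wt\om|_{U_{y;i}})$.

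The one genuinely delicate point, which I expect to be the main obstacle, is the last verification in the converse: upgrading the purely local information of Definition~\ref{NCD_dfn} — that $\io(\wt V)$ is an NC symplectic divisor — to the statement that the specific, globally given transverse collection $\{U_{y;ij}\}_{j\ne i}$ on the open set $U_{y;i}\!\subset\!\wt X$ is an SC symplectic divisor there. This rests on the fact that, for a transverse collection of closed codimension-$2$ submanifolds, being an SC symplectic divisor is a pointwise condition (symplecticity of each $U_{y;I}$ and agreement of orientations), together with the elementary but slightly fussy observation that the decomposition of the germ at a point of a normal crossings locus into smooth transverse branches is unique, so the branches $U_{y;ij}$ through $z$ are forced to coincide with those occurring in any local SC symplectic chart; matching the intersection orientations then reduces to the remark that both $U_{y;i}$ and the ambient manifold of that chart carry the $\wt\om$-orientation.
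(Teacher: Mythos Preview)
Your argument is correct and follows the same underlying route as the paper --- passing between a symplectic structure on $X_{\io,\psi}$ and a form $\wt\om$ on $\wt X$ via the explicit atlas built in the proof of Lemma~\ref{NCC_lmm}. The difference is one of packaging: the paper's proof is two sentences, because it invokes Proposition~\ref{NCD_prp} to convert between ``$\io(\wt V)$ is an NC symplectic divisor in $(\wt X,\wt\om)$'' and the conditions ``$\io_k^*\wt\om$ is symplectic on $\wt V_\io^{(k)}$ with matching intersection and $\wt\om$-orientations for all $k$,'' the latter being what the proof of Lemma~\ref{NCC_lmm} directly yields from the $\Symp^+(\X_y)$ conditions. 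You instead verify the SC symplectic divisor condition in each chart directly from Definition~\ref{NCD_dfn}, which is why you encounter the ``delicate point'' of matching branches --- that step is exactly the content of Proposition~\ref{NCD_prp} unpacked locally. Both approaches are sound; the paper's is shorter because the branch-matching work has already been absorbed into Proposition~\ref{NCD_prp}.
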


\begin{proof}
By the proof of Lemma~\ref{NCC_lmm}, a symplectic structure~$\om$ on~$X_{\io,\psi}$
corresponds to a symplectic form~$\wt\om$ on~$\wt{X}$ such that 
$\io_k^*\wt\om$ is a symplectic form on $\wt{V}_{\io}^{(k)}$ for all $k\!\in\!\Z^+$,
the intersection and \hbox{$\om$-orientations} of~$\wt{V}_{\io}^{(k)}$ are the same,
and $\psi^*\io^*\wt\om\!=\!\io^*\wt\om$.
Thus, the claim follows from Proposition~\ref{NCD_prp}.
\end{proof}

\noindent
The local perspective on NC varieties in Section~\ref{NCCloc_subs} leads
to natural notions of smoothness, immersion, and transverse immersion.
These notions in turn make it possible to adapt the considerations of 
Section~\ref{EquivalentDfn_subs} to NC varieties.
The codimension~2 condition in Definition~\ref{ImmTransConf_dfn} is not material
for these considerations.

\begin{dfn}\label{NCRes2_dfn}
Let $X$ be an NC variety as in Definition~\ref{ImmTransConf_dfn}. 
A~\sf{resolution of~$X$} is a sequence of closed immersions
\BE{absNC2_e}
\ldots \lra X^{(k)}\xlra{f_{k;k-1}}X^{(k-1)}
\xlra{f_{k-1;k-2}}\cdots \xlra{f_{2;1}}  X^{(1)}
\xlra{f_{1;0}} X^{(0)}\!\equiv\!X\EE 
such that
\begin{enumerate}[label=(R\arabic*),leftmargin=*]

\setcounter{enumi}{-1}

\item $f_{1;0}$ is a transverse surjective immersion; 

\item for every $k\!\in\!\Z^+$, $X^{(k)}$ is a manifold with a free $\bS_k$-action
and the codimension of~$f_{k;k-1}$ is~2;

\item\label{Requiv2_it} for all $0\!\le\!k'\!\le\!k$, the map 
\BE{absNC2_e2} 
f_{k;k'}\!\equiv\!f_{k'+1;k'}\!\circ\!\ldots\!\circ\!f_{k;k-1}\!: X^{(k)}\!\lra\!X^{(k')}\EE
is $\bS_{k'}$-equivariant, and
 
\item\label{inverse2_it} for all $0\!\le\!k'\!\le\!k$ and 
$x\!\in\!X^{(k)}\!-\!f_{k+1;k}(X^{(k+1)})$,
$f_{k;k'}^{-1}(f_{k;k'}(x))=\bS_{[k]-[k']}\!\cdot\!x$. 

\end{enumerate}
\end{dfn}

\vspace{.1in}

\noindent
By the reasoning after Definition~\ref{NCRes_dfn}, the assumptions in Definition~\ref{NCRes2_dfn}
imply that the map~\eref{absNC2_e2} is $\bS_{[k]-[k']}$-invariant.
If $(\io,\phi)$ is a compatible pair as in~\eref{iopsidfn_e} with $X\!=\!X_{\io,\psi}$,
then the sequence 
$$\ldots \lra \wt{X}_q^{(k)}\xlra{q_{k;k-1}}\wt{X}_q^{(k-1)}
\xlra{q_{k-1;k-2}}\cdots \xlra{q_{2;1}}  \wt{X}_q^{(1)}
\xlra{q_{1;0}} \wt{X}_q^{(0)}\!\equiv\!X$$
of immersions constructed in Section~\ref{NCCgl_subs} is a resolution of~$X$.
By the reasoning in the proof of Proposition~\ref{2TermToSeq_prp},
for any other resolution~\eref{absNC2_e} of~$X$ 
there exist unique smooth maps~$h_k$ so that the  diagram
\BE{absNC2_e10}\begin{split}
\xymatrix{\ldots\ar[r]& X^{(k)}\ar[d]^{h_k}\ar[rr]^{f_{k;k-1}}&&
X^{(k-1)}\ar[d]^{h_{k-1}}\ar[rr]^{f_{k-1;k-2}}&& \cdots \ar[r]^{f_{2;1}}& X^{(1)} 
\ar[d]^{h_1}\ar[r]^>>>>>{f_{1;0}}& X^{(0)}\!=\!X\ar[d]^{\id}\\
\ldots\ar[r] &\wt{X}_q^{(k)}\ar[rr]^{q_{k;k-1}}&&
\wt{X}_{\io}^{(k-1)} \ar[rr]^{q_{k-1;k-2}}&& \cdots\ar[r]^{q_{2;1}} & 
\wt{X}_{\io}^{(1)} \ar[r]^>>>>>{q_{1;0}}& \wt{X}_q^{(0)}\!=\!X} 
\end{split}\EE
commutes. These maps are $\bS_k$-equivariant diffeomorphisms.

\subsection{Another global perspective}
\label{NCgl2_subs}

\noindent
By~\eref{NCSCCregCond_e2}, a regularization $(\Psi_{k;i})_{k\in\Z^+,i\in[k]}$
for $(\io,\psi)$ is determined by the refined regularization 
$(\Psi_{k+1;1}\!\circ\!D_{k;1})_{k\in\Z^{\ge0}}$ for~$\io$.
By the proof of Proposition~\ref{NCCgl_prp} below,
which re-interprets the notion of $\wt\om$-regularization for~$(\io,\psi)$
provided by Definition~\ref{NCSCCregul_dfn} 
in terms of the notion of refined $\wt\om$-regularization for~$\io$
provided by Definition~\ref{NCSCDregul_dfn},
the condition~\eref{NCSCCregCond_e0} is equivalent to the \hbox{$\psi$-equivariance} 
condition~\eref{NCCgl_e2} on $(\Psi_{k+1;1}\!\circ\!D_{k;1})_{k\in\Z^{\ge0}}$.\\

\noindent
For $k\!\in\!\Z^+$ and $i,j\!\in\![k]$ distinct, let $\si_{k;ij}\!\in\!\bS_k$ 
be the transposition interchanging $i,j\!\in\![k]$.
Let $(\io,\psi)$ be a compatible pair as in~\eref{iopsidfn_e}.
The bijection~$\phi_k$ in~\eref{tiXprtX_e} is equivariant with respect to the inclusion of 
$\bS_k$ into $\bS_{k+1}$ induced by the inclusion 
$$[k]\lra[k\!+\!1], \qquad i\lra i\!+\!1;$$
the first inclusion identifies $\bS_k$ with the subgroup~$\bS_{[k+1]-[1]}$ of~$\bS_{k+1}$.
The action of~$\si_{k+1;12}$ on~$\wt{X}_{\io}^{(k+1)}$ 
corresponds to an involution~$\psi_k$ on~$\wt{V}_{\io}^{(k)}$ via the bijection~$\phi_k$, i.e.
\BE{psikdfn_e}\psi_k\!: \wt{V}_{\io}^{(k)}\lra \wt{V}_{\io}^{(k)}, \qquad
\phi_k\!\circ\!\psi_k\equiv \si_{k+1;12}\!\circ\!\phi_k, \qquad
\psi_k^2=\id_{\wt{V}_{\io}^{(k)}}\,.\EE
This involution extends the natural $\bS_k$-action on~$\wt{V}_{\io}^{(k)}$ to 
an $\bS_{k+1}$-action, under the above identification of~$\bS_k$ with~$\bS_{[k+1]-[1]}$,
so that $\phi_k$ becomes $\bS_{k+1}$-equivariant.\\

\noindent
We note~that 
$$\psi_1\!\approx\!\psi\!: 
\wt{V}_{\io}^{(1)}\!\approx\!\wt{V}\lra\wt{V}\!\approx\!\wt{V}_{\io}^{(1)}$$
and that the diagram 
\BE{iopsidiag_e}\begin{split}
\xymatrix{ \wt{V}_{\io}^{(k)} \ar[rr]^{\psi_k} \ar[d]_{\io_{k;k-1}^{(i)}} 
&& \wt{V}_{\io}^{(k)} \ar[d]^{\io_{k;k-1}^{(i)}}\\
\wt{V}_{\io}^{(k-1)} \ar[rr]^{\psi_{k-1}} && \wt{V}_{\io}^{(k-1)}}
\end{split}\EE
commutes for all $k\!\in\!\Z^+$ and $i\!\in\![k]\!-\!\{1\}$.
By the $\bS_{k+1}$-equivariance of~$\phi_k$ and the commutativity of
the square in~\eref{phikiio_e}, the map~\eref{whiokk_e}
is $\bS_{k'+1}$-equivariant and $\bS_{[k+1]-[k'+1]}$-invariant with respect 
to the extended actions for every $k'\!\in\![k]$.\\

\noindent
Under the assumptions of Lemma~\ref{iopsi_lmm}, the involution~$\psi_k$ is smooth.
For each $i\!\in\![k]\!-\!\{1\}$, the homomorphism
\BE{Djpsi_e} D_i\psi_k\!: \cN\io_{k;k-1}^{(i)}\lra \cN\io_{k;k-1}^{(i)}\EE
induced by the commutativity of~\eref{iopsidiag_e} in this case is an isomorphism.
Let 
$$D\psi_k\!\equiv\!\bigoplus_{i\in[k]-\{1\}}\!\!\!\!\!\!\!D_i\psi_k\!:
\cN_{k;1}\io\!\equiv\!\bigoplus_{i\in[k]-\{1\}}\!\!\!\!\!\!\cN\io_{k;k-1}^{(i)} 
\lra \cN_{k;1}\io\,.$$

\begin{prp}\label{NCCgl_prp}
Suppose $(\wt{X},\wt\om)$ and  $(\io,\psi)$ are as in Definition~\ref{NCSCCregul_dfn}.
An  $\wt\om$-regularization for~$(\io,\psi)$
is equivalent to a refined $\wt\om$-regularization 
\BE{NCCgl_e0}\wh\fR\equiv\big(\wh\cR_k\big)_{k\in\Z^{\ge0}}\equiv
\big((\wh\rho_{k;i},\wh\na^{(k;i)})_{i\in[k]},\wh\Psi_k\big)_{k\in\Z^{\ge0}}\EE
for~$\io$ as in~\eref{NCSCDregul_e} such~that 
\begin{gather}
\label{NCCgl_e1}
\big(\wh\rho_{k;i},\wh\na^{(k;i)}\big)
=\big\{D_i\psi_k\big\}^*\big(\wh\rho_{k;i},\wh\na^{(k;i)}\big) 
\qquad \forall~i\!\in\![k]\!-\!\{1\},~k\!\in\!\Z^+,\\ 
\label{NCCgl_e2}
\psi\!\circ\!\wh\Psi_{k;1}=\wh\Psi_{k;1}\!\circ\!D\psi_k
~~\hbox{on}~~\cN_{k;1}'\io=D\psi_k(\cN_{k;1}'\io)
\qquad \forall~k\!\in\!\Z^+,
\end{gather}
with $\cN_{k;1}'\io\!\equiv\!\cN_{k;1}\io\!\cap\!\Dom(\wh\Psi_k)$ and
$\wh\Psi_{k;1}\!:\cN_{k;1}'\io\!\lra\!\wt{V}_{\io}^{(1)}\!\approx\!\wt{V}$ as in~\eref{Psikkprdfn_e}.
\end{prp}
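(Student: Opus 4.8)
\emph{Proof proposal.} The plan is to establish an explicit bijective correspondence between the two bodies of data. Recall from~\eref{psikdfn_e} that the $\bS_k$-action on $\wt{V}_\io^{(k)}$ extends, via the involution~$\psi_k$, to an $\bS_{k+1}$-action under which the bijection~$\phi_k$ of~\eref{tiXprtX_e} is equivariant, with $\bS_k$ identified with~$\bS_{[k+1]-[1]}$ and~$\psi_k$ with the transposition~$\si_{k+1;12}$. The correspondence itself is dictated by~\eref{NCSCCregul_e0}: to an $\wt\om$-regularization $\fR=(\rho_{k;i},\na^{(k;i)},\Psi_{k;i})_{k\in\Z^+,i\in[k]}$ for~$(\io,\psi)$ I attach the tuple $\wh\fR=((\wh\rho_{k;i},\wh\na^{(k;i)})_{i\in[k]},\wh\Psi_k)_{k\in\Z^{\ge0}}$ defined by
$$\wh\Psi_k=\Psi_{k+1;1}\!\circ\!D_k, \qquad
\big(\wh\rho_{k;i},\wh\na^{(k;i)}\big)=\big\{D_i\phi_k\big\}^*\big(\rho_{k+1;i+1},\na^{(k+1;i+1)}\big)\quad\big(i\!\in\![k]\big),$$
which is a refined $\wt\om$-regularization for~$\io$ by the very definition of $\wt\om$-regularization for~$(\io,\psi)$ (Definition~\ref{NCSCCregul_dfn}); thus for the forward implication only~\eref{NCCgl_e1} and~\eref{NCCgl_e2} remain to be verified. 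For the converse I invert these formulas --- which determines $\Psi_{k+1;1}$ and the $(\rho_{k+1;i+1},\na^{(k+1;i+1)})$, $i\!\in\![k]$, since $D_k$ and the $D_i\phi_k$ are isomorphisms --- and then extend the slot $i\!=\!1$ to all of $[k\!+\!1]$ as forced by~\eref{NCSCCregCond_e2} and~\eref{NCSCCregul_e3}, namely $\Psi_{k+1;i}=\Psi_{k+1;\si(i)}\!\circ\!D\si|_{\Dom}$ and $(\rho_{k+1;i},\na^{(k+1;i)})=\{D_i\si\}^*(\rho_{k+1;\si(i)},\na^{(k+1;\si(i))})$ for any $\si\!\in\!\bS_{k+1}$ with $\si(i)\!=\!1$.

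Next I would verify that the correspondence matches all the axioms. The crux is that~\eref{NCCgl_e1} and~\eref{NCCgl_e2}, over and above $\wh\fR$ being a refined $\wt\om$-regularization for~$\io$, correspond precisely to the two axioms on the $(\io,\psi)$-side that involve the transposition $\si_{k+1;12}\!\leftrightarrow\!\psi_k$, namely~\eref{NCSCCregul_e3} and~\eref{NCSCCregCond_e0}; the remaining axiom~\eref{NCSCCregCond_e2} merely relates the slots $i\!\ge\!2$ to $i\!=\!1$ and so holds tautologically under the correspondence. Differentiating $\phi_k\!\circ\!\psi_k\!=\!\si_{k+1;12}\!\circ\!\phi_k$ and passing to normal bundles yields $D_i\phi_k\!\circ\!D_i\psi_k=D_{i+1}\si_{k+1;12}\!\circ\!D_i\phi_k$ for $i\!\in\![k]\!-\!\{1\}$; since $\si_{k+1;12}$ fixes $i\!+\!1$, the $\si\!=\!\si_{k+1;12}$ case of~\eref{NCSCCregul_e3} becomes, after pulling back by~$D_i\phi_k$, exactly~\eref{NCCgl_e1}, and combined with the $\bS_{[k+1]-[1]}$-part of~\eref{NCSCCregul_e3} --- which is~\eref{NCSCDregul_e2} for~$\wh\fR$ --- it yields all of~\eref{NCSCCregul_e3}, because $\bS_{[k+1]-[1]}$ and~$\si_{k+1;12}$ generate~$\bS_{k+1}$. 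For~\eref{NCSCCregCond_e0}, restricting $\Psi_{k+1;1}$ and $\Psi_{k+1;2}\!=\!\Psi_{k+1;1}\!\circ\!D\si_{k+1;12}$ to $\cN q_{k+1;1}\!\cap\!\cN_2q_{k+1}$ and transporting along~$D_k$ --- under which that subbundle becomes $\cN_{k;1}\io$ and $D\si_{k+1;12}$ becomes~$D\psi_k$ --- turns its $(i_1,i_2)\!=\!(1,2)$ instance into the equation $q\!\circ\!\io\!\circ\!\wh\Psi_{k;1}=q\!\circ\!\io\!\circ\!\wh\Psi_{k;1}\!\circ\!D\psi_k$, with $\wh\Psi_{k;1}$ as in~\eref{Psikkprdfn_e}. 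This equation is equivalent to~\eref{NCCgl_e2}: one direction is immediate from $q\!\circ\!\io\!\circ\!\psi=q\!\circ\!\io$ (clear from~\eref{iopsiprop_e2}), and for the other I note that $\psi\!\circ\!\wh\Psi_{k;1}$ and $\wh\Psi_{k;1}\!\circ\!D\psi_k$ agree on the zero section (since $\psi\!\circ\!\wt\io_{k;1}=\wt\io_{k;1}\!\circ\!\psi_k$ by~\eref{iopsidiag_e}), while $\wh\Psi_{k;1}$ is an open map --- it is an immersion between manifolds of dimension $\dim\wt{V}$, as $\io\!\circ\!\wh\Psi_{k;1}=\wh\Psi_k|_{\cN_{k;1}\io}$ is one by~\eref{Psikkprdfn_e} and $\wh\Psi_k$ is a local diffeomorphism --- and $q\!\circ\!\io$ is injective near each point of a dense open subset of~$\wt{V}$ (away from the closed nowhere-dense preimage of $(X_\prt)_\prt$), so the two continuous maps must coincide everywhere. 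The remaining instances of~\eref{NCSCCregCond_e0} then follow from the $(1,2)$ one by conjugating with~\eref{NCSCCregCond_e2}.

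It remains to check that the extension to the slots $i\!\ge\!2$ in the converse direction is well-defined, i.e.~independent of the choice of~$\si$; this reduces to the $\bS_{[k+1]-[1]}$-invariance of~$\wh\Psi_k$ and of~$\wh\rho_{k;1}$ (contained in~\eref{NCPsikk_e2} and~\eref{NCSCDregul_e2} for~$\wh\fR$), together with the cocycle relations among the bundle isomorphisms~$D\si$. That the two constructions are mutually inverse is then immediate from the defining formulas, and the attendant matching of domains of definition is routine and in any case immaterial, since regularizations are compared only on the level of germs. The step I expect to be the main obstacle is the equivalence of~\eref{NCSCCregCond_e0} with~\eref{NCCgl_e2} in the second paragraph: unlike the other axiom equivalences, which are purely formal manipulations of the maps~$D$, this one equates a condition on $q\!\circ\!(\cdot)$ with a condition on $\psi\!\circ\!(\cdot)$ and therefore genuinely uses the geometry --- that $q\!\circ\!\io$ is generically injective and that regularizations restrict to tautological maps along zero sections. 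A secondary, pervasive nuisance is keeping the index shift $k\!\leftrightarrow\!k\!+\!1$ and the identification $\bS_k\!\cong\!\bS_{[k+1]-[1]}$ consistent throughout.
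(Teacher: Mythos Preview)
Your proposal is correct and follows essentially the same approach as the paper: set up the correspondence via~\eref{NCSCCregul_e0}, reduce the $\bS_{k+1}$-equivariance conditions~\eref{NCSCCregul_e3} and~\eref{NCSCCregCond_e0} to their $\si_{k+1;12}$-instances (modulo the $\bS_{[k+1]-[1]}$-part already encoded in the refined regularization), and translate those via~$\phi_k$ into~\eref{NCCgl_e1} and~\eref{NCCgl_e2}. One small sharpening: $q\!\circ\!\io$ is in fact locally injective \emph{everywhere} on~$\wt V$ (in any NC chart as in the proof of Lemma~\ref{NCC_lmm}, $q|_{U_i}$ and $\io|_{\wt V_{ij}}$ are both injective), which makes your open--closed argument for~\eref{NCCgl_e2} go through cleanly without the density qualifier; the paper phrases the same step via the denseness of $V_\io^{(1)}\!-\!V_\io^{(2)}$ in~$V_\io^{(1)}$.
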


\begin{proof} Suppose $(\Psi_{k;i})_{k\in\Z^+,i\in[k]}$ is a tuple of 
regularizations for the immersions~$q_k^{(i)}$ as in Definition~\ref{NCTransConfregul_dfn}.
Let $k\!\in\!\Z^+$.
The $\bS_k$-invariance condition~\eref{NCSCCregCond_e2} is equivalent to the condition
\BE{Psik1i_e2}
\Psi_{k;i}=\Psi_{k;1}\!\circ\!D\si\!\circ\!D\si_{k;1i}\big|_{D\si_{k;1i}(\Dom(\Psi_{k;1}))}
\quad\forall\,i\!\in\![k],\,\si\!\in\!\bS_{[k]-[1]}.\EE
Since $D_{k-1}$ is equivariant with respect to the natural identification of~$\bS_{k-1}$ 
with~$\bS_{[k]-[1]}$, the above condition and~\eref{NCSCCregCond_e2} are equivalent~to
\BE{Psik1i_e2b}\begin{aligned}
\big\{\!\Psi_{k;1}\!\circ\!D_{k-1}\big\}
&=\big\{\!\Psi_{k;1}\!\circ\!D_{k-1}\big\}\!\circ\!D\!\si\big|_{D_{k-1}^{\,-1}(\Dom(\Psi_{k;1}))}
&\quad&\forall\,\si\!\in\!\bS_{k-1},\\
\Psi_{k;i}&=\Psi_{k;1}\!\circ\!D\si_{k;1i}\big|_{D\si_{k;1i}(\Dom(\Psi_{k;1}))}
&\quad&\forall\,i\!\in\![k]\,.
\end{aligned}\EE
The condition~\eref{Psik1i_e2} implies \eref{NCSCCregCond_e0} for all $i_1,i_2\!\in\![k]\!-\!\{1\}$.
Suppose $k\!\ge\!2$ and~\eref{Psik1i_e2} holds. 
The full condition~\eref{NCSCCregCond_e0} is then equivalent~to
$$q\!\circ\!\Psi_{k;1}\big|_{\cN_2q_k\cap\Dom(\Psi_{k;1})}
=q\!\circ\!\Psi_{k;1}\!\circ\!D\si_{k;12}
\big|_{\cN_1q_k\cap D\si_{k;12}(\Dom(\Psi_{k;1}))}\,.$$
By the middle statement in~\eref{psikdfn_e} for $k\!-\!1$ instead of~$k$ and by
the denseness of $V_{\io}^{(1)}\!-\!V_{\io}^{(2)}$ in \hbox{$V_{\io}^{(1)}\!\subset\!\wt{X}$},
\eref{iopsiprop_e2} implies that the above condition and~\eref{NCSCCregCond_e0} 
are equivalent~to
\BE{Psik1i_e4}\begin{split}
&\psi\!\circ\!\big\{\!\Psi_{k;1}\!\circ\!D_{k-1}\big\}
\big|_{\cN_{k-1;1}\io\cap D_{k-1}^{\,-1}(\Dom(\Psi_{k;1}))}\\
&\hspace{1in}=\big\{\!\Psi_{k;1}\!\circ\!D_{k-1}\big\}\!\circ\!D\psi_{k-1}
\big|_{D\psi_{k-1}(\cN_{k-1;1}\io\cap D_{k-1}^{\,-1}(\Dom(\Psi_{k;1})))}\,.
\end{split}\EE
Thus, a regularization $(\Psi_{k;i})_{k\in\Z^+,i\in[k]}$ for  $(\io,\psi)$
as in Definition~\ref{NCTransConfregul_dfn} determines a refined regularization 
$(\wh\Psi_k\!\equiv\!\Psi_{k+1;1}\!\circ\!D_{k;1})_{k\in\Z^{\ge0}}$ 
for~$\io$ as in Definition~\ref{NCTransCollregul_dfn} satisfying~\eref{NCCgl_e2}.\\

\noindent
If a tuple $(\wh\Psi_k)_{k\in\Z^{\ge0}}$ is a refined regularization for~$\io$, then
$$\wh\Psi_{k-1}=\wh\Psi_{k-1}\!\circ\!D\!\si\big|_{\Dom(\wh\Psi_{k-1})}
\qquad\forall~\si\!\in\!\bS_{k-1},~k\!\in\!\Z^+\,.$$
Via the second identity in~\eref{Psik1i_e2b} with 
$$\Psi_{k;1}\equiv \wh\Psi_k\!\circ\!D_{k-1}^{-1}\big|_{D_{k-1}(\Dom(\wh\Psi_{k-1}))},$$
the tuple $(\wh\Psi_k)_{k\in\Z^{\ge0}}$ thus determines a tuple $(\Psi_{k;i})_{k\in\Z^+,i\in[k]}$ 
of regularizations for the immersions~$q_k^{(i)}$ satisfying~\eref{NCSCCregCond_e2}. 
If the first tuple satisfies~\eref{NCCgl_e2},
the second tuple satisfies~\eref{Psik1i_e4} and thus~\eref{NCSCCregCond_e0}.\\

\noindent
Suppose $\fR$ is a tuple as in~\eref{NCSCCregul_e} so that $(\Psi_{k;i})_{k\in\Z^+,i\in[k]}$ 
is a regularization for~$(\io,\psi)$.
Let $(\wh\Psi_k)_{k\in\Z^{\ge0}}$ be the corresponding refined regularization for~$\io$ and
$$\big(\wh\rho_{k;i},\wh\na^{(k;i)}\big)
=\big\{D_i\phi_k\big\}^*\big(\rho_{k+1;i+1},\na^{(k+1;i+1)}\big)
\quad\forall\,i\!\in\![k],\,k\!\in\!\Z^+.$$
Since $D_{k-1}$ is equivariant with respect to the natural identification of~$\bS_{k-1}$ 
with~$\bS_{[k]-[1]}$, 
the condition that~\eref{NCSCCregul_e0} is a refined $\wt\om$-regularization implies that
\BE{Psik1i_e12a}
\big(\rho_{k;i},\na^{(k;i)}\big)=\big\{\!D_i\si\big\}^{\!*}
\big(\rho_{k;\si(i)},\na^{(k;\si(i))}\big)
~~\forall\,i\!\in\![k],\,\si\!\in\!\bS_{[k]-[1]},\,k\!\in\!\Z^+\,.\EE
The condition~\eref{NCSCCregul_e3} is equivalent~to
$$\big(\rho_{k;i},\na^{(k;i)}\big) = 
\big\{\!D_i\si_{k;1i}\big\}^{\!*}
\big\{\!D_1\si\big\}^{\!*}\big\{\!D_1\si_{k;12}\big\}^{\!*}
\big(\rho_{k;2},\na^{(k;2)}\big)
\quad\forall\,i\!\in\![k],\,\si\!\in\!\bS_{[k]-[1]},\,k\!\in\!\Z^+.$$
This condition is in turn equivalent~to
\BE{Psik1i_e12b}\begin{aligned}
\big(\rho_{k;2},\na^{(k;2)}\big)
&=\big\{\!D_2\si_{k;12}\big\}^{\!*}\big\{\!D_1\si\big\}^{\!*}
\big\{\!D_1\si_{k;12}\big\}^{\!*}\big(\rho_{k;2},\na^{(k;2)}\big)
&~~&\forall\,\si\!\in\!\bS_{[k]-[1]},\,k\!\in\!\Z^+,\\
\big(\rho_{k;i},\na^{(k;i)}\big)&
=\big\{\!D_i\si_{k;1i}\big\}^{\!*}\big\{\!D_1\si_{k;12}\big\}^{\!*}
\big(\rho_{k;2},\na^{(k;2)}\big)&~~&\forall\,i\!\in\![k],\,k\!\in\!\Z^+\,.
\end{aligned}\EE
Since $\si_{k;12}$ commutes with all elements of $\bS_{[k]-[2]}$ and
$$\si_{k;12}\si_{k;2i}\si_{k;12}=\si_{k;1i}=\si_{k;2i}\si_{k;12}\si_{k;2i}
\quad\forall\,i\!\in\![k]\!-\![2],\,k\!\in\!\Z^+,$$
the condition \eref{Psik1i_e12a} implies that
the first condition in~\eref{Psik1i_e12b} is equivalent to 
$$\big(\rho_{k;i},\na^{(k;i)}\big)=\big\{\!D_i\si_{k;12}\big\}^{\!*}\big(\rho_{k;i},\na^{(k;i)}\big)
\quad\forall\,i\!\in\![k]\!-\![2],\,k\!\in\!\Z^+\,.$$
By the middle statement in~\eref{psikdfn_e} for $k\!-\!1$ instead of~$k$,
the above condition and~\eref{NCSCCregul_e3} are equivalent~to
$$\big(\wh\rho_{k-1;i},\wh\na^{(k-1;i)}\big)
=\big\{D_i\psi_{k-1}\big\}^*\big(\wh\rho_{k-1;i},\wh\na^{(k-1;i)}\big) 
\qquad \forall~i\!\in\![k\!-\!1]\!-\!\{1\},~k\!\in\!\Z^+.$$
Thus, an $\wt\om$-regularization for~$(\io,\psi)$ as in Definition~\ref{NCSCCregul_dfn} 
determines a refined $\wt\om$-regularization~\eref{NCCgl_e0} for~$\io$ satisfying~\eref{NCCgl_e1}.\\

\noindent
Conversely, a refined $\wt\om$-regularization~\eref{NCCgl_e0} 
determines a tuple~\eref{NCSCCregul_e} such that 
 $(\Psi_{k;i})_{k\in\Z^+,i\in[k]}$ is a regularization for~$(\io,\psi)$
and the associated tuple~\eref{NCSCCregul_e0}
is a refined $\wt\om$-regularization for the immersion~$\io$ 
via the second identity in~\eref{Psik1i_e12b} with
$$\big(\rho_{k;2},\na^{(k;2)}\big)\equiv 
\big\{(D_1\phi_{k-1})^{-1}\}^*\big(\wh\rho_{k-1;1},\wh\na^{(k-1;1)}\big).$$
If the tuple~\eref{NCCgl_e0} satisfies~\eref{NCCgl_e1}, then
the tuple~\eref{NCSCCregul_e} satisfies~\eref{NCSCCregul_e3}.
\end{proof}

\subsection{Examples}
\label{eg_subs}

\noindent
We now give some examples of NC divisors and varieties.

\begin{eg}\label{NCD_eg0}
Let $X$ be a manifold and $\{V_i\}_{i\in S}$ be a finite transverse collection of 
closed submanifolds of~$X$ of codimension~2.
The associated normalization 
$$\io\!:\wt{V}\equiv\bigsqcup_{i\in S}\!V_i \lra X$$
as in Lemma~\ref{NCD_lmm} is induced by the inclusions $V_i\!\lra\!X$ and
$$\wt{V}_{\io}^{(k)}=
\bigsqcup_{\begin{subarray}{c}I\subset S\\ |I|=k\end{subarray}}
\bigsqcup_{\tau\in\Aut(I)}
\!\!\!\!\!\!\big\{\big(x,(\tau(i),x)_{i\in I}\big)\!:\,x\!\in\!V_I\big\}$$
is the disjoint union of $k!$ copies of the disjoint union of 
the submanifolds~$V_I$ with $|I|\!=\!k$. 
\end{eg}

\begin{eg}\label{NCC_eg0}
Let $\X\!\equiv\!(X_I)_{I\in\cP^*(N)}$ is an $N$-fold transverse configuration
in the sense of Definition~\ref{TransConf_dfn1} such that 
$X_{ij}$ is a closed submanifold of~$X_i$ of codimension~2 for all $i,j\!\in\![N]$ distinct,
$$\wt{X}=\bigsqcup_{i\in[N]}\!\!\{i\}\!\times\!X_i\,, \qquad
\wt{V}= \bigsqcup_{i\in[N]}\bigsqcup_{j\in[N]-i}\!\!\!\!\{(i,j)\}\!\times\!X_{ij},$$
and $q\!:\wt{X}\!\lra\!X_{\eset}$ be the natural quotient map.
The normalization of the preimage of the singular locus $X_{\prt}\!\subset\!X_{\eset}$
in~$\wt{X}$,
$$\io\!:\wt{V}\lra 
q^{-1}\big(X_{\prt}\big)=\bigsqcup_{i\in[N]}\bigcup_{j\in[N]-i}\!\!\!\!\{i\}\!\times\!X_{ij}\,,$$
is induced by the inclusions $X_{ij}\!\lra\!X_i$. 
The associated involution~is
$$\psi\!:\wt{V}\lra\wt{V}, \qquad (i,j,x)\lra (j,i,x).$$
\end{eg}

\begin{eg}\label{NCDvsC_eg}
An NC symplectic divisor $V$ in $(X,\om)$ gives rise to an NC symplectic variety
as follows.
Let $\io\!:\wt{V}\!\lra\!X$ be the associated closed transverse immersion
as in Lemma~\ref{NCD_lmm},
$$(\wt{X},\wt\om)=(X,\om)\sqcup(\wt{V}\!\times\!\C,\pi_1^*\io^*\om\!+\!\pi_2^*\om_{\C}), 
\qquad
\wt{V}'=\{1\}\!\times\!\wt{V}\sqcup\{2\}\!\times\!\wt{V}\sqcup\wt{V}_{\io}^{(2)}\!\times\!\C,$$
where $\pi_1,\pi_2\!:\wt{V}\!\times\!\C\!\lra\!\wt{V},\C$ are the two projection maps
and $\om_{\C}$ is the standard symplectic form on~$\C$.
We define a closed transverse immersion $\io'\!:\wt{V}'\!\lra\!\wt{X}$ and 
an involution~$\psi$ on~$\wt{V}'$ by
\begin{alignat*}{3} 
\io'(1,\wt{v})&=\io(\wt{v})\in X, &\quad
\io'(2,\wt{v})&=(\wt{v},0)\in\wt{V}\!\times\!\C,&\quad
\io'\big((x,\wt{v}_1,\wt{v}_2),c\big)&=(\wt{v}_1,c)\in\wt{V}\!\times\!\C,\\
\psi(1,\wt{v})&=(2,\wt{v}), &\quad
\psi(2,\wt{v})&=(1,\wt{v}),&\quad
\psi\big((x,\wt{v}_1,\wt{v}_2),c\big)&=\big((x,\wt{v}_2,\wt{v}_1),c\big);
\end{alignat*}
see Figure~\ref{NCDvsC_fig}.
The pair $(\io',\psi)$ satisfies the three conditions in~\eref{iopsiprop_e1} with $\io$
replaced by~$\io'$ 
and thus determines an NC variety~$X_{\io',\psi}$.
Since $\psi^*\io'^*\wt\om\!=\!\io'^*\wt\om$, 
$X_{\io',\psi}$ is an NC symplectic variety by Corollary~\ref{NCC_crl}.
\end{eg}

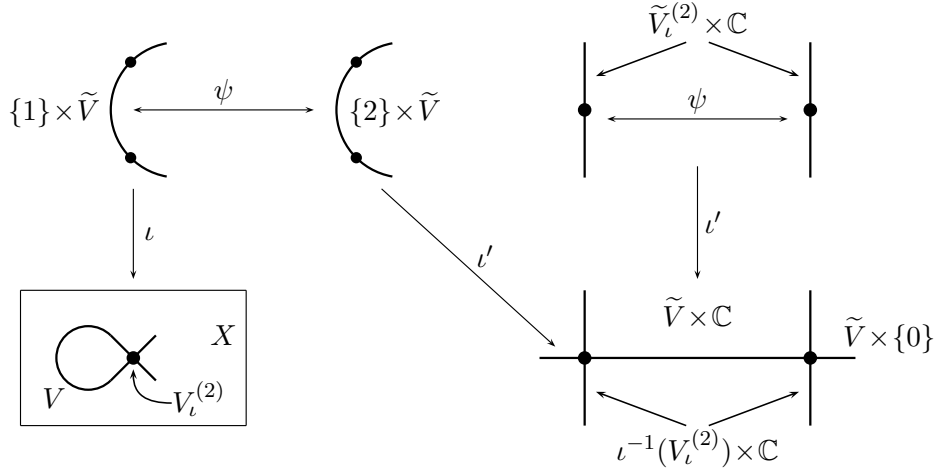
\begin{figure}
\begin{pspicture}(-4,-1.5)(11,4.5)
\psset{unit=.3cm}
\psarc[linewidth=.1](2,11){3}{100}{260}\psarc[linewidth=.1](12,11){3}{100}{260}
\pscircle*(-.12,13.12){.25}\pscircle*(-.12,8.88){.25}
\pscircle*(9.88,13.12){.25}\pscircle*(9.88,8.88){.25}
\rput(-3.5,11){$\{1\}\!\times\!\wt{V}$}\rput(11.6,11){$\{2\}\!\times\!\wt{V}$}
\psline[linewidth=.05]{<->}(0,11)(8,11)\rput(4,11.8){$\psi$}
\psline[linewidth=.05]{->}(0,7.5)(0,3.5)\rput(0.7,5.5){$\io$}
\psline[linewidth=.05]{->}(11,7.5)(18.7,0.5)\rput(15.6,4.5){$\io'$}
\psline[linewidth=.05]{->}(25,8.5)(25,3.5)\rput(25.7,6){$\io'$}
\pscircle*(20,11){.3}\pscircle*(30,11){.3}
\psline[linewidth=.1](20,8)(20,14)\psline[linewidth=.1](30,8)(30,14)
\rput(25,15){$\wt{V}_{\io}^{(2)}\!\times\!\C$}
\pnode(24.5,14){A1}\pnode(20.5,12.5){B1}\ncline[linewidth=.07]{->}{A1}{B1}
\pnode(25.5,14){A1}\pnode(29.5,12.5){B1}\ncline[linewidth=.07]{->}{A1}{B1}
\psline[linewidth=.05]{<->}(21,10.6)(29,10.6)\rput(25,11.4){$\psi$}
\psline[linewidth=.1](-1,-1)(1,1)\psline[linewidth=.1](-1,1)(1,-1)
\pscircle*(0,0){.3}\psarc[linewidth=.1](-2,0){1.41}{45}{-45}
\psline[linewidth=.05](-5,-3)(5,-3)\psline[linewidth=.05](-5,3)(5,3)
\psline[linewidth=.05](-5,-3)(-5,3)\psline[linewidth=.05](5,3)(5,-3)
\rput(4,1){$X$}\rput(-3.5,-1.8){$V$}\rput(2.9,-1.8){$V_{\io}^{(2)}$}
\pnode(0,-.5){B}\pnode(1.7,-2){A}
\nccurve[linewidth=.07,angle=180,angleB=-90,ncurv=1]{->}{A}{B}
\pscircle*(20,0){.3}\pscircle*(30,0){.3}
\psline[linewidth=.1](20,-3)(20,3)\psline[linewidth=.1](30,-3)(30,3)
\psline[linewidth=.1](18,0)(32,0)
\rput(25,2){$\wt{V}\!\times\!\C$}\rput(25,-4){$\io^{-1}(V_{\io}^{(2)})\!\times\!\C$}
\pnode(24.5,-3){A1}\pnode(20.5,-1.5){B1}\ncline[linewidth=.07]{->}{A1}{B1}
\pnode(25.5,-3){A1}\pnode(29.5,-1.5){B1}\ncline[linewidth=.07]{->}{A1}{B1}
\rput(33.5,1){$\wt{V}\!\times\!\{0\}$}
\end{pspicture}
\caption{The normalization $\wt{X}\!\equiv\!X\!\sqcup\!(\wt{V}\!\times\!\C)$
of the NC variety~$X_{\io',\psi}$ associated with an NC divisor $V\!\subset\!X$ 
as in Example~\ref{NCDvsC_eg}.}
\label{NCDvsC_fig}
\end{figure}

\begin{eg}\label{NCC_eg1}
A generalization of the 2-fold SC symplectic configuration of Example~\ref{NCC_eg0}
is obtained by taking two disjoint copies, $V_1$ and~$V_2$, of a smooth symplectic divisor~$V$
in the same symplectic manifold~$(\wt{X},\wt\om)$.
Let $\psi\!:V_1\!\lra\!V_2$ be a symplectomorphism
and $\psi\!:V_2\!\lra\!V_1$ be its inverse; thus, $\psi$ is an involution
on $\wt{V}\!\equiv\!V_1\!\sqcup\!V_2$.
In~this case, the normalization 
$$\io\!:\wt{V}\lra V\!\equiv\!V_1\!\cup\!V_2\subset \wt{X}$$
is just the inclusion into~$\wt{X}$.
The pair~$(\io,\psi)$ satisfies the three conditions in~\eref{iopsiprop_e1}
and thus determines an NC variety~$X_{\io,\psi}$;
it is obtained by identifying $V_1$ with~$V_2$ in~$\wt{X}$ via~$\psi$.
The singular locus~$X_{\prt}$ in this case can be identified 
with~$(V,\om|_V)$.
\end{eg}

\begin{eg}\label{NCC_eg2}
A more elaborate 2-fold NC symplectic variety
is obtained by taking~$V$ in Example~\ref{NCC_eg1} to be 
any closed symplectic submanifold of~$(\wt{X},\wt\om)$ and $\psi\!:V\!\lra\!V$
to be any symplectomorphism without fixed points such that $\psi\!\circ\!\psi\!=\!\id_V$.
The normalization
$$\io\!:\wt{V}\!\equiv\!V\lra V\subset\wt{X}$$
is again just the inclusion.
The pair~$(\io,\psi)$ satisfies the three conditions in~\eref{iopsiprop_e1}
and thus determines an NC variety~$X_{\io,\psi}$;
it is obtained by ``folding" $X$ along~$V$ as directed by~$\psi$. 
The singular locus~$X_{\prt}$ in this case is the quotient of~$V$ 
by the $\Z_2$-action determined by~$\psi$.
\end{eg}

\begin{figure}
\begin{pspicture}(-4,-2.5)(11,2)
\psset{unit=.3cm}
\pscircle[linewidth=.1](0,-3.08){4.14}\rput(8,-5.38){\sm{$\wt{V}$}}
\psarc[linewidth=.06]{->}(0,-3.08){2.5}{60}{240}\rput(-1,-2.5){\sm{$\psi$}}
\pscircle*(0,1.06){.25}\pscircle*(0,-7.22){.25}
\pscircle*(3.59,-1.01){.25}\pscircle*(3.59,-5.15){.25}
\pscircle*(-3.59,-1.01){.25}\pscircle*(-3.59,-5.15){.25}
\rput(4.8,-1){\sm{$\wt{v}_{12}$}}\rput(0.1,2){\sm{$\wt{v}_{13}$}}
\rput(-4.7,-1){\sm{$\wt{v}_{23}$}}\rput(-4.3,-5.6){\sm{$\wt{v}_{21}$}}
\rput(0.1,-8.2){\sm{$\wt{v}_{31}$}}\rput(4.8,-5.2){\sm{$\wt{v}_{32}$}}
\psline[linewidth=.06](3.69,-3.08)(4.49,-3.08)\psline[linewidth=.06](-3.69,-3.08)(-4.49,-3.08)
\psline[linewidth=.06](2.27,.87)(1.77,0)\psline[linewidth=.06](2.27,-6.93)(1.77,-6.16)
\psline[linewidth=.06](-2.27,.87)(-1.77,0)\psline[linewidth=.06](-2.27,-6.93)(-1.77,-6.16)
\rput(4.9,-3.08){\sm{$r$}}\rput(-4.9,-2.9){\sm{$\bar{r}$}}
\rput(2.8,1.2){\sm{$\bar{q}$}}\rput(-2.8,-7.26){\sm{$q$}}
\rput(2.7,-7.26){\sm{$\bar{p}$}}\rput(-2.6,1.1){\sm{$p$}}
\psline[linewidth=.1](30,.65)(30,-5.58)\psline[linewidth=.1](30.65,0)(24.42,0)
\psline[linewidth=.1](30.85,-5.23)(24.77,.85)
\psarc(30.65,.65){.65}{270}{180}\psarc(30.5,-5.58){.5}{180}{45}
\psarc(24.42,.5){.5}{45}{270}
\pscircle*(30,0){.3}\pscircle*(30,-4.38){.3}\pscircle*(25.62,0){.3}
\rput(29.3,-.8){\sm{$\wt{x}_1$}}\rput(25.4,-.9){\sm{$\wt{x}_2$}}\rput(29.2,-4.8){\sm{$\wt{x}_3$}}
\psline[linewidth=.06](29.6,-2.19)(30.4,-2.19)\rput(30.9,-2.19){\sm{$r$}}
\psline[linewidth=.06](24.42,.5)(23.5,.88)\rput(23.1,1.1){\sm{$\bar{r}$}}
\psline[linewidth=.06](27.81,-.4)(27.81,.4)\rput(27.81,1){\sm{$p$}}
\psline[linewidth=.06](27.53,-2.47)(28.09,-1.91)\rput(27,-2.7){\sm{$q$}}
\psline[linewidth=.06](30.83,.83)(31.39,1.39)\rput(31.8,1.5){\sm{$\bar{q}$}}
\psline[linewidth=.06](30.5,-5.58)(30.88,-6.5)\rput(31.4,-6.5){\sm{$\bar{p}$}}
\psline[linewidth=.1]{->}(10,-3.08)(20,-3.08)\rput(15,-2.4){$\io$}
\rput(23,-5.38){$V\subset\wt{X}$}
\end{pspicture}
\caption{The normalization of an NC variety with $\io(\wt{v}_{ij})\!=\!\wt{x}_i$
and $\psi(\wt{v}_{ij})\!=\!\wt{v}_{ji}$.}
\label{NCC_fig}
\end{figure}
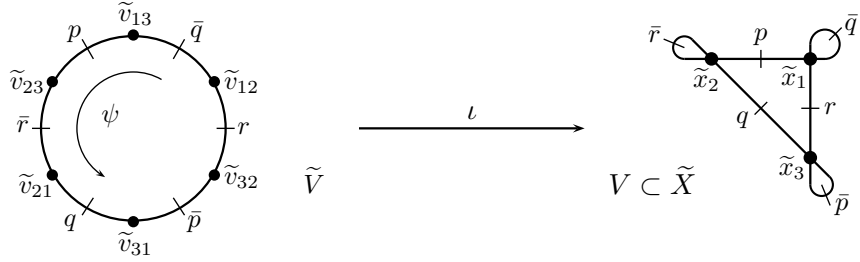

\noindent
A ``3-fold" version of Example~\ref{NCC_eg2} is shown in Figure~\ref{NCC_fig}.
The topological space~$X_{\io,\psi}$ is obtained from~$\wt{X}$ by folding
the NC divisor~$V$ as indicated by the action of~$\psi$ on its normalization~$\wt{V}$.
This folding is not induced by an involution on~$V$ itself;
while most points of~$V$ are identified in pairs, 
the three double points are identified into~one.

\section{On the proof of Theorem~\ref{NCC_thm}}
\label{NCCpf_sec}

\noindent
We now explain why the proof of \cite[Theorem~2.17]{SympDivConf}, 
which is outlined in \cite[Figure~2]{SympDivConf}, extends to Theorem~\ref{NCC_thm}.
This proof revolves around a weaker version
of the notion of regularization of Definition~\ref{TransConfregul_dfn}\ref{SCCreg_it},
which is readily adaptable to Definition~\ref{NCCregul_dfn}.

\begin{dfn}\label{ConfRegulLoc_dfn}
Suppose $\X\!\equiv\!\{X_I\}_{I\in\cP^*(N)}$ is a transverse configuration as in Definition~\ref{TransConf_dfn1},
\hbox{$I^*\!\in\!\cP^*(N)$}, and $U\!\subset\!X_{I^*}$ is an open subset.
A \sf{regularization for~$U$ in~$\X$} is a tuple  $(\Psi_i)_{i\in I^*}$, 
where $\Psi_i$ is a regularization for~$U$ in~$X_i$ 
in the sense of Definition~\ref{smreg_dfn}, such that
\begin{alignat*}{2}
\Psi_i\big(\cN_{I^*;I}\!\cap\!\Dom(\Psi_i)\big)&=X_I\!\cap\!\Im(\Psi_i)
&\quad &\forall\,i\!\in\!I\!\subset\!I^*,\\
\Psi_{i_1}\big|_{\cN_{I^*;i_1i_2}\cap\Dom(\Psi_{i_1})}
&=\Psi_{i_2}\big|_{\cN_{I^*;i_1i_2}\cap\Dom(\Psi_{i_2})}
&\quad &\forall\,i_1,i_2\!\in\!I^*\,.
\end{alignat*}
\end{dfn}

\begin{dfn}\label{LocalRegul_dfn}
Suppose $\X\!\equiv\!\{X_I\}_{I\in\cP^*(N)}$ is a transverse configuration
and $W\!\subset\!X_{\eset}$ is an open subset.
A \sf{weak $(\om_i)_{i\in[N]}$-regularization for} a transverse configuration~$\X$ over~$W$
is a tuple as in~\eref{SCCregdfn_e0} such~that 
\begin{enumerate}[label=$\bu$,leftmargin=*]

\item for every $I\!\in\!\cP^*(N)$, the tuple $(\Psi_{I;i})_{i\in I}$
is a regularization for $X_I\!\cap\!W$ in~$\X$;

\item for all $i\!\in\!I\!\subset\![N]$, the tuple 
$((\rho_{I;j},\na^{(I;j)})_{j\in I-i},\Psi_{I;i})$
is an $\om_i$-regularization for $X_I\!\cap\!W$ in~$X_i$
in the sense of Definition~\ref{sympreg1_dfn};

\item for all $i\!\in\!I'\!\subset\!I\!\subset\![N]$,
the bundle isomorphism~$\fD\Psi_{I;i;I'}$ associated with~$\fD\Psi_{I;i}$
as in~\eref{wtPsiIIdfn_e} is a product Hermitian isomorphism and 
\BE{LocalRegul_e2}
\Psi_{I;i}\big|_{\Dom(\Psi_{I;i})\cap\fD\Psi_{I;i;I'}^{\,-1}(\Dom(\Psi_{I';i}))}
=\Psi_{I';i}\circ\fD\Psi_{I;i;I'}|_{\Dom(\Psi_{I;i})\cap\fD\Psi_{I;i;I'}^{\,-1}(\Dom(\Psi_{I';i}))}\,.\EE
\end{enumerate}
\end{dfn}

\vspace{.1in}

\noindent
Thus, an $(\om_i)_{i\in[N]}$-regularization for~$\X$ in the sense of 
Definition~\ref{TransConfregul_dfn}\ref{SCCreg_it}
is a weak $(\om_i)_{i\in[N]}$-regularization for~$\X$ over $W\!=\!X_{\eset}$
such that 
$$\Dom(\Psi_{I;i})=\fD\Psi_{I;i;I'}^{\,-1}(\Dom(\Psi_{I';i}))
\qquad\forall\,i\!\in\!I'\!\subset\!I\!\subset\![N],~|I'|\!\ge\!2,$$
as required by the first condition in~\eref{overlap_e}.
If $W,W_1,W_2\!\subset\!X_{\eset}$ are open subsets with \hbox{$W\!\subset\!W_1\!\cap\!W_2$}
and $(\fR_t^{(1)})_{t\in B}$ and $(\fR_t^{(2)})_{t\in B}$
are families of weak regularizations for~$\X$ over~$W_1$ and~$W_2$, respectively,
we define
$$\big(\fR_t^{(1)}\big)_{t\in B} \cong_W\big(\fR_t^{(2)}\big)_{t\in B}$$
if the restrictions of the two regularizations to~$W$ agree on
the level of germs;
see the first part of \cite[Section~5.1]{SympDivConf} for a formal definition.\\

\noindent
Let $X$ be an NC variety with an NC atlas  $(U_y,\X_y,\vph_y)_{y\in\cA}$
as in Definition~\ref{ImmTransConf_dfn}.
For each open subset $W\!\subset\!X$, let 
$$\cA(W)=\big\{y\!\in\!\cA\!:\,U_y\!\subset\!W\big\}.$$
In particular, $(U_y,\X_y,\vph_y)_{y\in\cA(W)}$ is an NC atlas for~$W$.
For a symplectic structure~$\om$ on~$X$ as in Definition~\ref{NCCregul_dfn}\ref{NCCregul_it1}
and an open subset $W\!\subset\!X$,
a \sf{weak $\om$-regularization for~$X$ over~$W$} is
a tuple~$(\fR_y)_{y\in\cA(W)}$ as in Definition~\ref{NCCregul_dfn}\ref{NCCregul_it1}
so that 
each~$\fR_y$ is a weak $(\om_{y;i})_{i\in[N_y]}$-regularization for $\X_y$ and 
\eref{NCCregulover_e} holds for all $y,y'\!\in\!\cA(W)$ and $x\!\in\!U_{yy';x}\!\subset\!U_y\!\cap\!U_{y'}$ 
as in Definition~\ref{ImmTransConf_dfn0}\ref{NCatlas_it}.\\

\noindent
Suppose $(\wt{X},\wt\om)$ is a symplectic manifold,
 $(\io,\psi)$ is a compatible pair as in~\eref{iopsidfn_e}
so that~$\io$ is a closed transverse immersion of codimension~2 and $\psi$ is 
a smooth involution, and $W$ is an open subset of the NC variety $X\!=\!X_{\io,\psi}$.
From the global perspective of Definition~\ref{NCSCCregul_dfn}, 
a \sf{weak $\om$-regularization for~$X$ over~$W$} is a tuple as in~\eref{NCSCCregul_e}
satisfying~\eref{NCSCCregul_e3} such~that 
\begin{enumerate}[label=$\bu$,leftmargin=*]

\item  the tuple $(\Psi_{k;i})_{k\in\Z^+,i\in[k]}$ is a regularization for
the restriction of~$(\io,\psi)$ to  $\io^{-1}(q^{-1}(W))$,

\item the tuple~\eref{NCSCCregul_e0} 
is a refined $\wt\om$-regularization for the restriction of~$\io$ to 
$\io^{-1}(q^{-1}(W))$, except the first condition in~\eref{NCoverlap_e} with 
$\Psi_k\!\equiv\!\Psi_{k+1;1}\!\circ\!D_{k;1}$ may not hold 
and the second holds over 
the intersection of the domains of the two sides, 
i.e.~$\Dom(\Psi_k)\!\cap\!\fD\Psi_{k;k'}^{-1}(\Dom(\Psi_{k'}))$.

\end{enumerate}
By Lemma~5.8 and Corollary~5.9 in~\cite{SympDivConf},
weak regularizations and equivalences between them in the simple NC~setting
can be cut down to regularizations and equivalences between regularizations.
The same reasoning applies in the arbitrary NC~setting viewed from
the global perspective of either Section~\ref{NCCgl_subs} or~\ref{NCgl2_subs}.
Thus, it is sufficient to establish Theorem~\ref{NCC_thm}
with {\it regularizations} replaced by {\it weak regularizations} everywhere.\\

\noindent
The last task is readily accomplished by combining the proof of \cite[Theorem~2.7]{SympDivConf}
with the local perspective of Section~\ref{NCCloc_subs} via an inductive construction.
Let 
$$\big(U_y,\X_y\!\equiv\!(X_{y;I})_{I\in\cP^*(N_y)},
\vph_y\!:U_y\!\lra\!X_{y;\eset}\big)_{y\in\Z^+}$$
be a locally finite collection of NC charts covering~$X$ and
$(U_y')_{y\in\Z^+}$ be an open cover of~$X$ such that $\ov{U_y'}\!\subset\!U_y$ for every $y\!\in\!\Z^+$.
For each $y^*\!\in\!\Z^+$, the tuple
$(\fR_{t;y})_{t\in N(\prt B),y\in\cA(U_{y^*})}$
is  an $(\om_t)_{t\in N(\prt B)}$-family of weak regularizations for~$X$ over~$U_{y^*}$
in the local sense.
It determines an $(\om_{t;y^*;i})_{t\in N(\prt B),i\in[N_y^*]}$-family 
$(\fR_{y^*;t})_{t\in N(\prt B)}$ of weak regularizations
for~$\X_{y^*}$ over $X_{y^*;\eset}$
in the sense of Definition~\ref{LocalRegul_dfn}.\\

\noindent
Suppose $y^*\!\in\!\Z^+$ and we have constructed 
\begin{enumerate}[label=(I\arabic*),leftmargin=*]

\item\label{neighbX_it} an open neighborhood~$W_{y^*}$ of
$$\ov{U_{y^*}^<}\equiv\bigcup_{y<y^*}\!\!\ov{U_y'}\subset X\,,$$

\item\label{neighbB_it} a neighborhood $N_{y^*}(\prt B)$ of  $\ov{N'(\prt B)}$ in $N(\prt B)$,

\item  a smooth family $(\mu_{t,\tau})_{t\in B,\tau\in\bI}$ of
1-forms on~$X$ such~that 
\BE{NC_e0}\mu_{t,0}=0, ~~ 
\supp\big(\mu_{\cdot,\tau}\big)\!\subset\! 
\big(B\!-\!N_{y^*}(\prt B)\big)\!\times\!(X\!-\!X^*), ~~
\om_{t,\tau}\!\equiv\!\om_t\!+\!\nd\mu_{t,\tau}\in \Symp^+(X)\EE
for all $t\!\in\!B$ and $\tau\!\in\!\bI$, 

\item an $(\om_{t,1})_{t\in B}$-family $(\fR_{t;y}')_{t\in B,y\in\cA(W_{y^*})}$
of weak regularizations for~$X$ over~$W_{y^*}$ such~that
\BE{NC_e3}\big(\fR'_{t;y}\big)_{t\in N_{y^*}(\prt B),y\in\cA(W_{y^*})}
\cong_{W_{y^*}} \!\!\big(\fR_{t;y}\big)_{t\in N_{y^*}(\prt B),y\in\cA(W_{y^*})}.\EE

\end{enumerate}
This family determines an $(\om_{t,1;y^*;i})_{t\in B,i\in[N_y^*]}$-family 
$(\fR_{y^*;t}')_{t\in B}$ of weak regularizations
for~$\X_{y^*}$ over
$$  X_{y^*;\eset}^<\equiv \vph_{y^*}\big(U_{y^*}\!\cap\!W_{y^*}\big) \subset X_{y^*;\eset}$$
such that
$$\big(\fR'_{y^*;t}\big)_{t\in N_{y^*}(\prt B)}
\cong_{X_{y^*;\eset}^<} \!\!\big(\fR_{y^*;t}\big)_{t\in N_{y^*}(\prt B)}.$$

\vspace{.2in}

\noindent
Let \hbox{$N_{y^*+1}(\prt B)\!\subset\!N_{y^*}(\prt B)$} 
be a neighborhood of~$\ov{N'(\prt B)}$,
$W'\!\subset\!W_{y^*}$ be a neighborhood of $\ov{U_{y^*}^<}$, and
$U'\!\subset\!U''\!\subset\!U_{y^*}$ be neighborhoods of~$\ov{U_{y^*}'}$
such~that 
$$\ov{N_{y^*+1}(\prt B)}\subset N_{y^*}(\prt B),   \qquad 
\ov{W'}\subset W_{y^*}, \qquad \ov{U'}\subset U'', \quad\hbox{and}\quad\ov{U''}\subset U_{y^*}\,.$$
Define
\begin{gather*}
W_{y^*+1}=U'\!\cup\!W',\qquad
X_{y^*;i}''=X_{y^*;i}\cap\!\vph_{y^*}\!(U'')\quad\forall\,i\!\in\![N_{y^*}], \\
X_{y^*;\eset}'^<=\vph_{y^*}\!\big(U_{y^*}'\!\cap\!W\big)\subset X_{y^*;\eset}, \qquad
X_{y^*}^*=\vph_{y^*}\!\big(U_{y^*}\!\cap\!X^*\big)\,.
\end{gather*}
In particular, \ref{neighbX_it} and~\ref{neighbB_it} hold with $y^*$ replaced by $y^*\!+\!1$.
By repeated applications of \cite[Proposition~5.3]{SympDivConf} as in
the proof of \cite[Theorem~2.17]{SympDivConf} at the end of \cite[Section~5.1]{SympDivConf}, 
we obtain
\begin{enumerate}[label=$\bullet$,leftmargin=*]

\item a smooth family $(\mu_{y^*;t,\tau;i}')_{t\in B,\tau\in\bI,i\in[N_{y^*}]}$ of
1-forms on~$X_{y^*;\eset}$ such~that 
\begin{gather*} 
\mu_{y^*;t,0;i}'=0, \quad 
\supp\big(\mu_{y^*;\cdot,\tau;i}'\big)\subset 
\big(B\!-\!N_{y^*+1}(\prt B)\big)\!\times\!\big(X_{y^*;i}''\!-\!X_{y^*;\eset}'^<\!\cup\!X_{y^*}^*\big)
\quad \forall~t\!\in\!B,\,\tau\!\in\!\bI,\,i\!\in\![N_{y^*}],\\
\big(\om_{y^*;t,\tau;i}'\equiv\om_{y^*;t,1;i}\!+\!\nd\mu_{y^*;t,\tau;i}'\big)_{i\in[N_{y^*}]}
\in \Symp^+(\X_{y^*}) \quad \forall~t\!\in\!B,\,\tau\!\in\!\bI,
\end{gather*}

\item an $(\om'_{y^*;t,1;i})_{t\in B,i\in[N_{y^*}]}$-family $(\wt\fR_{y^*;t})_{t\in B}$
of weak regularizations for~$\X_{y^*}$ over $\vph_{y^*}(U')$ such~that
$$\big(\wt\fR_{y^*;t}\big)_{t\in N_{y^*+1}(\prt B)} \cong_{\vph_{y^*}(U')}\! 
\!\!\big(\fR_{y^*;t}\big)_{t\in N_{y^*+1}(\prt B)}\,,
\quad 
\big(\wt\fR_{y^*;t}\big)_{t\in B} \cong_{\vph_{y^*}\!(U'\cap W')} \!\big(\fR_{y^*;t}'\big)_{t\in B}\,\,.$$
\end{enumerate}

\vspace{.1in}

\noindent
Since $X_{y^*;i}''\!\subset\!\vph_{y^*}\!(U'')$,
the support of $\mu_{y^*;t,\tau;i}'$ is contained in $\vph_{y^*}(U'')$.
For all $t\!\in\!B$ and $\tau\!\in\!\bI$, the tuple
$(\mu_{y^*;t,\tau;i}')_{i\in[N_{y^*}]}$ thus determines 
a 1-form  $\mu_{t,\tau}'$ on~$X$ such~that 
\begin{gather}\notag
\vph_{y^*}^{\,*}\mu_{t,\tau}'=\big(\mu_{y^*;t,\tau;i}'\big)_{i\in[N_{y^*}]}\,,\\
\label{NC_e7}
\mu_{t',0}=0, ~~ 
\supp\big(\mu_{\cdot,\tau}'\big)\!\subset\! 
\big(B\!-\!N_{y^*+1}(\prt B)\big)\!\times\!(U_{y^*}\!-\!X^*), ~~
\om_{t,\tau}'\!\equiv\!\om_{t;1}\!+\!\nd\mu_{t,\tau}'\in \Symp^+(X).
\end{gather}
These forms vary smoothly with $t\!\in\!B$ and $\tau\!\in\!\bI$.\\

\noindent
Let $\be_1,\be_2\!:\bI\!\lra\!\bI$ be smooth non-decreasing functions such that
$$\be_1(\tau)=\begin{cases}\tau,&\hbox{if}~1\!-\!\tau\!\ge\!2^{-y^*};\\
1,&\hbox{if}~1\!-\!\tau\!\le\!2^{-y^*-1};
\end{cases} \qquad
\be_2(\tau)=\begin{cases}0,&\hbox{if}~1\!-\!\tau\!\ge\!2^{-y^*-1};\\
1,&\hbox{if}~\tau\!=\!1.
\end{cases}$$
We concatenate the families $(\mu_{t,\tau})_{t\in B,\tau\in\bI}$ and
$(\mu_{t,\tau}\!+\!\mu_{t,\tau}')_{t\in B,\tau\in\bI}$ of \hbox{1-forms} on~$X$ 
into a new smooth family $(\mu_{t,\tau})_{t\in B,\tau\in\bI}$~by
$$\mu_{t,\tau}''=\mu_{t,\be_1(\tau)}+\mu_{t,\be_2(\tau)}'\qquad\forall~t\!\in\!B,\,\tau\!\in\!\bI\,.$$
By~\eref{NC_e7}, 
\BE{NC_e8}\begin{aligned} 
\mu_{t,\tau}''(x)&=\mu_{t,1}''(x) &\qquad
&\forall~x\!\in\!X\!-\!U_{y^*},~1\!-\!\tau\!\le\!2^{-y^*-1}, \\
\mu_{t,\tau}''(x)&=\mu_{t,\tau}(x)  &\qquad &\forall~x\!\in\!X\!-\!U_{y^*}
~\hbox{s.t.}~\mu_{t,\tau'}(x)\!=\!\mu_{t,1}(x)~\forall~1\!-\!\tau'\!\le\!2^{-y^*}\,.
\end{aligned}\EE
Furthermore, \eref{NC_e0} holds with $\mu$ and $N_{y^*}(\prt B)$
replaced by~$\mu''$ and $N_{y^*+1}(\prt B)$, respectively.\\

\noindent
The family $(\wt\fR_{y^*;t})_{t\in B}$ determines an $(\om'_{t,1})_{t\in B}$-family 
$(\wt\fR_{t;y})_{t\in B,y\in\cA(U')}$ of weak regularizations for~$X$ over~$U'$ 
such~that 
\begin{gather*}
\big(\wt\fR_{t;y}\big)_{t\in N_{y^*+1}(\prt B),y\in\cA(U')}\cong_{U'}\!
\big(\fR_{t;y}\big)_{t\in N_{y^*+1}(\prt B),y\in\cA(U')}\,,\\
\quad \big(\wt\fR_{t;y}\big)_{t\in B}\cong_{U'\cap W'}\!
\big(\fR_{t;y}'\big)_{t\in B}\quad\forall\,y\!\in\!\cA(U'\!\cap\!W')\,.
\end{gather*}
Along with $(\fR_{t;y}')_{t\in B,y\in\cA(W_{y^*})}$, it thus determines
an  $(\om'_{t,1})_{t\in B}$-family $(\fR_{t;y}'')_{t\in B,y\in\cA(W_{y^*+1})}$
of weak regularizations for $\X$ over~$W_{y^*+1}$ so~that 
\BE{NC_e9}\big(\fR_{t;y}''\big)_{t\in B} = (\fR_{t;y}')_{t\in B} 
\qquad\forall~y\!\in\!\cA\big(W_{y^*+1}\!-\!\ov{U'}\big) \EE
and~\eref{NC_e3} holds with $\fR'$, $N_{y^*}(\prt B)$, and~$W_{y^*}$ replaced by 
$\fR''$, $N_{y^*+1}(\prt B)$, and~$W_{y^*+1}$, respectively.\\

\noindent
Since the collection $(U_y)_{y\in\Z^+}$ is locally  finite, for every point $x\!\in\!X$
there exist a neighborhood $U_x\!\subset\!X$ of~$x$ and $N_x\!\in\!\Z^+$ such~that 
$$U_x\cap U_{y^*}=\eset \qquad\forall~y^*\!\in\!\Z^+,\,y^*\!\ge\!N_x\,.$$
By~\eref{NC_e8} and~\eref{NC_e9},
$$\mu_{t,\tau}''\big|_{U_x}=\mu_{t,\tau}\big|_{U_x}, \quad 
\fR_{t;y}''=\fR_{t;y}'  \qquad\forall~t\!\in\!B,\,\tau\!\in\!\bI,\,y\!\in\!\cA(U_x),~y^*\!>\!N_x.$$
The inductive construction above thus terminates after finitely many step on a sufficiently small neighborhood $U_x\!\subset\!X$
of each point $x\!\in\!X$ and
provides a smooth family $(\mu_{t,\tau})_{t\in B,\tau\in\bI}$ of 
1-forms on~$X$ satisfying~\eref{SCCom_e} and an $(\om_{t,1})_{t\in B}$-family 
$(\wt\fR_t)_{t\in B}$ of weak regularizations for~$X$ over~$X$ satisfying~\eref{SCCom_e2}.\\


\noindent
{\it Department of Mathematics, the University of Iowa, Iowa City, IA 52242\\
 mohammad-tehrani@uiowa.edu}\\

\noindent
{\it Department of Mathematics, Stony Brook University, Stony Brook, NY 11794\\
markmclean@math.stonybrook.edu, azinger@math.stonybrook.edu}

\clearpage


\begin{thebibliography}{99}

\bibitem{AC} D.~Abramovich and Q.~Chen, 
{\it Stable logarithmic maps to Deligne-Faltings pairs~II},
Asian J.~Math.~18 (2014), no.~3, 465--488

\bibitem{CH} L.~Caporaso and J.~Harris,
{\it Counting plane curves in any genus}, Invent.~Math.~131 (1998), no.~2, 345--392

\bibitem{DF} A.~Daemi and K.~Fukaya, 
{\it Atiyah-Floer Conjecture: a formulation, a strategy to prove, and generalizations},
math/1707.03924 

\bibitem{SympNCSumm} 
M.~Farajzadeh Tehrani, M.~McLean, and A.~Zinger,
{\it Singularities and semistable degenerations for symplectic topology}, 
C.~R.~Math.~Acad.~Sci.~Paris 356 (2018), no.~4, 420-–432

\bibitem{SympDivConf} 
M.~Farajzadeh Tehrani, M.~McLean, and A.~Zinger,
{\it Normal crossings singularities for symplectic topology},
 Adv.~Math.~339 (2018), 672–-748

\bibitem{SympSumMulti} 
M.~Farajzadeh Tehrani, M.~McLean, and A.~Zinger,
{\it The smoothability of normal crossings symplectic varieties}, math/1410.2573v2

\bibitem{SympNCStructures} 
M.~Farajzadeh Tehrani and A.~Zinger,
{\it Normal crossings singularities for symplectic topology: structures}, 
in preparation  

\bibitem{Gf} R.~Gompf, {\it A new construction of symplectic manifolds}, 
Ann.~of Math.~142 (1995), no.~3, 527--595

\bibitem{Gr} M.~Gromov, \emph{Pseudoholomorphic curves in symplectic manifolds},  
Invent.~Math.~82 (1985), no~ 2, 307--347 

\bibitem{GrBook} M.~Gromov, {\it  Partial Differential Relations},
Results in Mathematics and Related Areas~(3), Springer-Verlag,~1986

\bibitem{GS} M.~Gross and B.~Siebert, 
{\it Logarithmic Gromov-Witten invariants}, J.~Amer.~Math.~Soc.~26 (2013), no.~2, 451--510

\bibitem{LR}  A.-M. Li and Y.~Ruan, 
{\it Symplectic surgery and Gromov-Witten invariants of Calabi-Yau 3-folds},
Invent.~Math.~145 (2001), no.~1, 151--218

\bibitem{Jun2}  J.~Li,
{\it A degeneration formula for GW-invariants}, 
J.~Diff.~Geom.~60 (2002), no.~1, 199--293

\bibitem{MW}  J.~McCarthy and J.~Wolfson, 
{\it Symplectic normal connect sum}, Topology 33 (1994), no.~4,
729--764

\bibitem{MAff} M.~McLean, 
{\it The growth rate of symplectic homology and affine varieties}, 
Geom.~Funct. Anal.~22 (2012), no.~2, 369--442

\bibitem{McLean} M.~McLean, 
{\it Reeb orbits and the minimal discrepancy of an isolated singularity},
Invent.~Math.~204 (2016), no.~2, 505-–594

\bibitem{Brett} B.~Parker,
{\it Gromov-Witten invariants of exploded manifolds},
math/1102.0158

\bibitem{Sheridan} N.~Sheridan,
{\it Homological mirror symmetry for Calabi-Yau hypersurfaces in projective space},
Invent.~Math.~199 (2015), no.~1, 1–-186

\bibitem{Tian} G.~Tian, 
{\it The quantum cohomology and its associativity}, 
Current Developments in Mathematics~1995, 361--401, Inter.~Press

\bibitem{Warner} F.~Warner,
{\it Foundations of Differentiable Manifolds and Lie Groups}, 
GTM~94, Springer-Verlag,~1983


\end{thebibliography}
\end{document}